\newcommand{\si}{\sigma}
\newcommand{\Si}{\Sigma}
\newcommand{\Ga}{\Gamma}
\newcommand{\bC}{\mathbb{C}}
\newcommand{\bP}{\mathbb{P}}
\newcommand{\bR}{\mathbb{R}}
\newcommand{\bZ}{\mathbb{Z}}
\newcommand{\btau}{\boldsymbol{\tau}}
\newcommand{\bsi}{{\boldsymbol{\si}}}
\newcommand{\cE}{\mathcal{E}}
\newcommand{\cL}{\mathcal{L}}
\newcommand{\cM}{\mathcal{M}}
\newcommand{\cO}{\mathcal{O}}
\newcommand{\cS}{\mathcal{S}}
\newcommand{\cT}{\mathcal{T}}
\newcommand{\cU}{\mathcal{U}}
\newcommand{\cX}{\mathcal{X}}
\newcommand{\cY}{\mathcal{Y}}
\newcommand{\cZ}{\mathcal{Z}}
\newcommand{\fp}{\mathfrak{p}}
\newcommand{\fQ}{\mathfrak{Q}}
\newcommand{\fX}{\mathfrak{X}}
\newcommand{\End}{\mathrm{End}}
\newcommand{\Spec}{\mathrm{Spec}}
\newcommand{{\inv} }{\mathrm{inv}}
\newcommand{\ev}{\mathrm{ev}}
\newcommand{\Aut}{\mathrm{Aut}}
\newcommand{\Def}{\mathrm{Def}}
\newcommand{\Res}{\mathrm{Res}}
\newcommand{\val}{ {\mathrm{val}} }
\newcommand{\vir}{{\mathrm{vir}}}
\newcommand{\pt}{ {\mathrm{pt}}}
\newcommand{\Pic}{ {\mathrm{Pic}}}
\newcommand{\one}{\mathbf{1}}
\newcommand{\bu}{\mathbf{u}}
\newcommand{\bp}{\mathbf{p}}
\newcommand{\bGa}{\mathbf{\Ga}}
\newcommand{\sv}{\mathsf{v}}
\newcommand{\sw}{\mathsf{w}}
\newcommand{\sH}{{\mathsf{H}}}
\newcommand{\tF}{ {\widetilde{F}} }
\newcommand{\tfQ}{ \widetilde{\mathfrak{Q}}}
\newcommand{\txi}{ {\widetilde{\xi}} }
\newcommand{\tS}{\widetilde{S}}
\newcommand{\hxi}{\hat{\xi}}
\newcommand{\htheta}{\hat{\theta}}
\newcommand{\that}{\hat{t}}
\newcommand{\phihat}{\hat{\phi}}
\newcommand{\hualpha}{{\hat{\underline \alpha}}}
\newcommand{\hubeta}{{\hat{\underline \beta}}}
\newcommand{\hugamma}{{\hat{\underline \gamma}}}
\newcommand{\vGa}{\vec{\Ga}}
\newcommand{\vmu}{\vec{\mu}}
\newcommand{\Mbar}{\overline{\cM}}
\renewcommand{\L}{{L_{P,Q}}}
\renewcommand{\H}{{\mathsf {H}}}
\newcommand{\T}{{T_{P,Q}}}
\newcommand{\HT}{{\H^{\T}}}
\newcommand{\Rtt}{ {R_{\T}} }
\newcommand{\bStt}{ {\bar{S}_{\T}} }
\newcommand{\nov}{\Lambda_{\mathrm{nov}}}
\newcommand{\novtt}{\bar{\Lambda}^{\T}_{\mathrm{nov}} }
\newtheorem*{theorem*}{Theorem}
\newtheorem{dummy}{dummy}[section]
\newtheorem{theorem}[dummy]{Theorem}
\newtheorem{proposition}[dummy]{Proposition}
\newtheorem{definition}[dummy]{Definition}
\newtheorem{remark}[dummy]{Remark}
\newtheorem*{remark*}{Remark}
\begin{document}

\title[Topological recursion for torus knots]{Topological recursion for the conifold transition of a torus knot}

\begin{abstract}
In this paper we prove a mirror symmetry conjecture based on the work of Brini-Eynard-Mari\~no
\cite{BEM} and Diaconescu-Shende-Vafa \cite{DSV}. This conjecture relates open Gromov-Witten invariants of the conifold
transition of a torus knot to the topological recursion on the B-model
spectral curve.
\end{abstract}

\author{Bohan Fang}
\address{Bohan Fang, Beijing International Center for Mathematical
  Research, Peking University, 5 Yiheyuan Road, Beijing 100871, China}
\email{bohanfang@gmail.com}

\author{Zhengyu Zong}
\address{Zhengyu Zong, Yau Mathematical Sciences Center, Tsinghua University, Jin Chun Yuan West Building,
Tsinghua University, Haidian District, Beijing 100084, China}
\email{zyzong@mail.tsinghua.edu.cn}

\maketitle

\tableofcontents

\section{Introduction}

The Chern-Simons theory of a knot in $S^3$ \cite{Wi89} is related to topological
strings in $T^*S^3$ and, through a conifold transition, to topological
strings in the resolved conifold $\cX=\cO_{\bP^1}(-1)\oplus
\cO_{\bP^1}(-1)$ \cite{Wi95, GV, OV}. We briefly review the related
background and state our result in this section.\footnote{For a comprehensive review, see e.g. \cite{Ma05, Ma10}.}

Let $A$ be a connection on $S^3$ for the gauge group $G=U(N)$ and $R$
be a representation of $G$. The Chern-Simons
action functional is the following (where $k$ is the coupling constant)
\[
S=\frac{k}{4\pi} \int_{S^3} \mathrm{Tr}_R\left( A\wedge d A +\frac{2}{3}
  A\wedge A \wedge A\right ).
\]
The partition function of this theory is defined by path-integrals in physics
\[
Z(S^3)={\int \mathcal D Ae^{\sqrt{-1}S(A)}}.
\]
Let $K\cong S^1\hookrightarrow S^3$ be a framed, oriented knot\footnote{The construction also
  works for links -- we restrict ourselves to knots in this paper.}. In
physics, the normalized vacuum
expectation value (vev) is
\[
W_R(K)=\frac{1}{Z(S^3)}\int \mathcal D A e^{\sqrt{-1}S(A)}\mathrm{Tr}_RU_K,
\]
where $U_K$ is the holonomy around $K$. This definition also relies on
path-integral. In mathematics, for example, when $R$ is the
fundamental representation of $G=U(N)$, $W_R(K)$ is related to the
HOMFLY polynomial $P_K(q,\lambda)$ of $K$ as below
\[
W_R(K)=\lambda
\frac{\lambda^{\frac{1}{2}}-\lambda^{-\frac{1}{2}}}{q^{\frac{1}{2}}-q^{-\frac{1}{2}}}P_K(q,\lambda),
\]
where
\[
q=e^{\frac{2\pi\sqrt{-1}}{k+N}},\quad \lambda=q^N.
\]

Under the large-$N$ duality and the conifold transition, the gauge
theory invariant $W_R(K)$ is conjecturally related to the open Gromov-Witten theory
of $\cX=\cO_{\bP^1}(-1)\oplus \cO_{\bP^1}(-1)$ \cite{GV, OV} with Lagrangian boundary condition $L_K$. When
$K$ is an unknot, the conjecture can be precisely formulated as the famous
Mari\~no-Vafa formula concerning Hodge integrals and is later proved
\cite{MaVa, LLZ, OkPa}.

Although the general construction of $L_K$ out of $K$ other than unknots was not very clear in
the beginning, later the construction \cite{Ta01, Ko07, LMV}
provided recipes for $L_K$. In this paper, we follow
Diaconescu-Shende-Vafa's construction \cite{DSV} for an algebraic knot $K$, which
produces a Maslov index $0$ non-compact Lagrangian $L_K\cong S^1\times
\bR^2$ in $\cX$.

Open Gromov-Witten theory is usually difficult to define for $(\cX,L_K)$. When $K$ is an
unknot, $L_K$ belongs to a class of Lagrangians called Aganagic-Vafa
branes \cite{AgVa,AKV} (Harvey-Lawson type Lagrangian). The
open Gromov-Witten invariants for $(\cX,L_K)$ in this situation are
defined in \cite{KaLiu, Liu02}.  When $K$ is a torus knot, i.e. a knot that can be realized on a real torus in $S^3$, one
can still use the localization technique to define such open
Gromov-Witten invariants \cite{DSV}. In \cite{DSV}, Diaconescu-Shende-Vafa also
prove the conjecture on the correspondence between HOMFLY polynomials and open Gromov-Witten
invariants.

The topological A-type string theory on $\cX$ has mirror symmetry. The
B-model is a spectral curve. When the Lagrangian $L_K$ in $\cX$ is an
Aganagic-Vafa brane, i.e. the conifold transition of an unknot, all
genus Gromov-Witten open-closed invariants are obtained from
Eynard-Orantin's topological recursion of a particular spectral curve \cite{EO07, BKMP, EO15, FLZ16}.
In \cite{BEM}, Brini-Eynard-Mari\~no propose a modified
spectral curve for a torus knot $K_{P,Q}$ with coprime $(P,Q)$. They conjecture that this curve should  be the correct B-model topological strings large-$N$ dual to the knot $K$. Combined with the construction of A-model open Gromov-Witten invariants in \cite{DSV}, one naturally expects the Eynard-Orantin recursion should predict these open GW invariants for $(\cX,L_K)$.

More precisely, we can collect genus $g$, $n$ boundary components open Gromov-Witten invariants and compose them into a generating function $F_{g,n}(X_1,\dots,X_n,\tau_1)$ (see Section \ref{sec:open-closed-GW}). On
the other hand, the spectral curve $C_q$, roughly speaking, is the following curve
\[
C_q=\{1+U+V+q UV=0\},
\]
with a holomorphic function $X=U^Q V^P$ on
$C_q$.

Eynard-Orantin's recursion is a recursive algorithm that produces all genus open invariants of this spectral curve
(see Section \ref{sec:EO-recursion}). From the recursion, we get a symmetric
meromophic $n$-form $\omega_{g,n}$ on $(\overline C_q)^n$. The
variable $\eta=X^{\frac{1}{Q}}$ is a local coordinate around
$(U,V)=(0,-1)$. One can integrate the expansion of
$\omega_{g,n}$ in $\eta_1,\dots,\eta_n$ around this point and define
\[
W_{g,n}(\eta_1,\dots,\eta_n,q)=\int_0^{\eta_1}\dots\int_0^{\eta_n} \omega_{g,n}.
\]
The mirror symmetry for $(\cX,L_K)$ says the following
\begin{theorem*}[Conjecture from Brini-Eynard-Mari\~no, Diaconescu-Shende-Vafa  \cite{BEM,DSV}]
Under $\tau_1=\log q$ and $X_k=\eta_k^Q$, the power series expansion of the open Gromov-Witten amplitude $F_{g,n}(\tau_1;X_1,\dots,X_n)$ in $X_1,\dots,X_n$ is the part in the power
series expansion of $(-1)^{g-1+n} Q^n
W_{g,n}(q,\eta_1,\dots,\eta_n)$ whose degrees of
each $\eta_k$ are divisible by $Q$.
\end{theorem*}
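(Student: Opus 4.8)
The plan is to compute both sides as explicit graph sums and to match them term by term, adapting the strategy by which the remodeling conjecture was established for the resolved conifold carrying an Aganagic-Vafa brane (the unknot case). I would first treat the unstable cases $(g,n)=(0,1)$ and $(0,2)$ separately. The genus-zero one-hole comparison pins down the change of variables $\tau_1=\log q$, $X_k=\eta_k^Q$ and serves as the open mirror map: one checks that integrating $\omega_{0,1}$ in $\eta$ near $(U,V)=(0,-1)$ and retaining the powers divisible by $Q$ reproduces the disk amplitude $F_{0,1}$. The genus-zero two-hole comparison identifies the Bergman kernel of $\overline C_q$, after this mirror map, with the A-model annulus function; since $C_q=\{1+U+V+qUV=0\}$ is rational and hence its Bergman kernel is knot-independent, this propagator is shared with the unknot geometry and feeds into all higher graphs.

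For the stable range $2g-2+n>0$ I would, on the B-model side, apply the Eynard-Orantin recursion and rewrite $\omega_{g,n}$ by Eynard's formula as a sum over stable graphs of genus $g$ with $n$ ordered legs: each vertex contributes an intersection number of $\psi$- and $\kappa$-classes on some $\Mbar_{g',n'}$ weighted by the local data of $(C_q,X)$ at a ramification point of $X=U^QV^P$, each internal edge contributes the Bergman propagator, and each leg contributes the local expansion of a one-form. The point $(0,-1)$ is where the open legs are expanded, and because $X\sim\eta^Q$ there it is itself a ramification point of order $Q$; the recursion therefore produces $\bZ/Q$-type local contributions, and extracting the part of $W_{g,n}$ whose degree in each $\eta_k$ is divisible by $Q$ amounts to selecting the sectors that descend to integral powers of $X_k=\eta_k^Q$. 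I would track how this selection commutes with the graph sum.

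On the A-model side I would use the Diaconescu-Shende-Vafa localization computation for $(\cX,L_K)$ to present $F_{g,n}$ as a sum over localization graphs of the same combinatorial shape. By a Mari\~no-Vafa-type evaluation each interior vertex becomes a Hodge integral, while the boundary vertices carry the winding data measured in units of $X_k$, and the coprime pair $(P,Q)$ enters precisely through the structure at the boundary. The comparison then decomposes into three matchings: vertices agree via the Laplace-transform identity that turns an Eynard vertex into the corresponding Hodge integral (the identity already underlying the unknot remodeling theorem); edges agree by the $(0,2)$ propagator identity; and legs agree on the $Q$-divisible part, where the order-$Q$ ramification and the covering $X=U^QV^P$ are felt. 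Matching these three layers should also account for the prefactor $(-1)^{g-1+n}Q^n$.

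The step I expect to be the main obstacle is the last one: the leg/open-sector matching together with the $Q$-divisibility extraction. This is what genuinely separates the torus knot from the framed unknot. One must prove that expanding $\omega_{g,n}$ on $C_q$ in $\eta$ and keeping only the $Q$-divisible powers reproduces exactly the winding sum dictated by the Diaconescu-Shende-Vafa construction, with the correct sign and the $Q^n$ normalization, and simultaneously that the powers of $\eta$ not divisible by $Q$ carry no A-model counterpart rather than signalling a discrepancy. Controlling the behaviour of the spectral curve at the order-$Q$ ramification point over $(0,-1)$, choosing the correct branch, and organizing the winding combinatorics form the technical heart; I would lean on the established conifold/unknot remodeling theorem to dispatch the knot-independent pieces of the graph sum and to isolate the genuinely new $(P,Q)$-dependent input.
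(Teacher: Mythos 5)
Your outline follows essentially the same route as the paper: the disk and annulus cases are established first (the disk via the genus-$0$ mirror theorem $I=J$, the annulus via the WDVV-type identity for the $\cS$-operator), and the stable range is then handled by matching Givental's A-model graph sum against the Eynard--DOSS B-model graph sum, with vertices and edges matched by the $R$-matrix identification inherited from the unknot/Aganagic-Vafa case and the entire new difficulty concentrated in the open-leaf/$Q$-divisibility comparison, exactly as you predict. One small correction of the picture: the point $(U,V)=(0,-1)$ where the open legs are expanded is an order-$Q$ branch point of $X$ over $X=0$ but is \emph{not} one of the ramification points $P_0,P_1$ at which the Eynard-Orantin residues are taken, so the recursion itself produces no $\bZ/Q$-type local contributions there --- the $Q$-divisibility projection $\mathfrak h_\eta$ enters only through the leg integrals $\int_0^{\eta}\rho_q^*\theta_\si^k$, which the paper controls by combining the disk mirror theorem with the expression of $\theta^0_\si$ in flat coordinates and the relation $S(\HT,\phi_{\si'})=z\,\partial_{\tau_1}S(1,\phi_{\si'})$.
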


\begin{remark}
In \cite{GJKS}, Gu-Jockers-Klemm-Soroush argue that the B-model
spectral curve of a knot is defined by the \emph{augmentation
  polynomial}. If we choose such augmentation polynomial, although
more complicated than the spectral curve $C_q$ here, we do not need to discard
terms with degrees divisible by $Q$. The authors hope the Main Theorem here will lead to the prediction
by the augmentation variety. An experimental computation of
augmentation polynomial by localization of open GW invariants is in \cite{Mah16}.
\end{remark}

\subsection*{Outline}

In Section \ref{sec:transition} we recall the construction in
\cite{DSV}. Starting from an algebraic knot $K$ in $S^3$ we will
construct a Lagrangian $L_K$ in $\cX$. In Section \ref{sec:A-model} we
define open Gromov-Witten invariants with respect to $(\cX,L_K)$ for a
torus knot $K$ in two ways: by localization in the moludi space of maps from bordered Riemann
surfaces, and by relative Gromov-Witten invariants. We also express
the generating functions for these invariants in graph sums. In
Section \ref{sec:spectral} we discuss the B-model mirror to $(\cX,L_K)$
as a spectral curve $C_q$, and express the Eynard-Orantin invariants in terms of graph
sums. Finally, in Section \ref{sec:mirror} we prove the all genus
mirror symmetry between $(\cX,L_K)$ and $C_q$, based on the localization
computation on disk invariants, genus $0$ mirror theorem and graph sum formulae.

\subsection*{Acknowledgement}
We would like to thank Chiu-Chu Melissa Liu for very helpful
discussion and the wonderful
collaboration in many related projects -- those projects are indispensable to
this one. We also thank her for the construction of disk invariants using relative
Gromov-Witten invariants in our case. The first author would like to
thank Sergei Gukov for the discussion on the localization computation
for torus knots. The work of BF is partially supported by the Recruitment
Program of Global Experts in China and a start-up grant at Peking
University. The work of ZZ is partially supported by the start-up
grant at Tsinghua University.

\section{Torus knots and the resolved conifold}
\label{sec:transition}

In this paper, we consider the \emph{conifold} $\cY_0$ which is a hypersurface in $\bC^4$ defined by the following equation:
\subsection{The conifold transition}
\begin{equation*}
xz-yw=0.
\end{equation*}
Here $x,y,z,w$ are standard coordinates in $\bC^4$. The conifold $\cY_0$ has a singularity at the origin $x=y=z=w=0$. The \emph{deformed conifold} $\cY_\delta$ defined by the following equation:
\begin{equation*}
xz-yw=\delta,
\end{equation*}
where $\delta\in\bC\setminus \{0\}$. Then $\cY_\delta$ is a smooth hypersurface in $\bC^4$. Consider the standard symplectic form on $\bC^4$:
\begin{equation*}
\omega_{\bC^4}=\frac{\sqrt{-1}}{2}(dx\wedge d\bar{x}+dy\wedge d\bar{y}+dz\wedge d\bar{z}+dw\wedge d\bar{w}).
\end{equation*}
The symplectic form on $\cY_\delta$ is $\omega_{Y_\delta}=\omega_{\bC^4}\mid_{Y_\delta}$.
Then $\cY_\delta$ becomes a symplectic manifold and there exists a symplectomorphism $\phi_\delta: \cY_\delta \to T^*S^3$, where $T^*S^3$ is the cotangent bundle of the 3-sphere. Consider the anti-holomorphic involution for $\delta \geq 0$.
\begin{eqnarray}\label{involution}
I:\bC^4&\to& \bC^4\\
(x,y,z,w)&\mapsto& (\bar{z},-\bar{w},\bar{x},-\bar{y}).\nonumber
\end{eqnarray}
Then $\cY_\delta$ is preserved by $I$. When $\delta>0$, the fixed locus $S_\delta$ of the induced anti-holomorphic involution $I_\delta$ on $\cY_\delta$ is isomorphic to a 3-sphere of radius $\sqrt{\delta}$ and $\phi_\delta(S_\delta)$ is the zero section of $T^*S^3$. When $\delta=0$, $S_0$ is the singular point of $\cY_0$. But we still have a symplectomorphism $\phi_0:\cY_0\setminus\{0\}\to T^*S^3\setminus S^3$, where we view $S^3$ as the zero section of $T^*S^3$.

The second way to smooth the singularity of $\cY_0$ is to consider the \emph{resolved conifold} $\cX$. We consider the blow-up of $\bC^4$ along the subspace $\{(x,y,z,w)|y=z=0\}$. Let $\cX$ be the resolution of $\cY_0$ under the blow-up. Then $\cX$ is isomorphic to the local $\bP^1$:
\begin{equation*}
\cX\cong[\cO_{\bP^1}(-1)\oplus\cO_{\bP^1}(-1)\to \bP^1].
\end{equation*}
If we view $\cX$ as a subspace of $\bC^4\times\bP^1$, then $\cX$ is defined by the following equations:
\begin{equation}\label{eqnX}
xs=wt,\quad ys=zt,
\end{equation}
where $[s:t]$ is the homogeneous coordinate on $\bP^1$. The resolution $p:\cX\to \cY_0$ is given by contracting the base $\bP^1$ in $\cX$. We say that $\cX$ and $\cY_\delta$ are related by the \emph{conifold transition}.

\subsection{Torus knots and Lagrangians in the deformed conifold}

\subsubsection{Conormal bundle of a knot in $S^3$}\label{conormal}
Consider a knot $K\subset S^3$. Let $M$ be the total space of the conormal bundle $N^*_K$ of $K$ in $S^3$. Then $M$ can be embedded into $T^*S^3$ as a Lagrangian submanifold and the intersection of $M$ with the zero section of $T^*S^3$ is the knot $K$. Recall that we have a symplectomorphism for $\delta>0$
$$\phi_\delta: \cY_\delta\to T^*S^3$$
and $\phi_\delta(S_\delta)$ is the zero section of $T^*S^3$. So $\phi_\delta^{-1}(M)$ is a Lagrangian submanifold of $Y_\delta$ and the intersection $\phi_\delta^{-1}(M)\cap S_\delta$ is a knot in $S_\delta$ which is isomorphic to $K\subset S^3$.

Our goal is to construct a Lagrangian $L_K$ in the resolved conifold $\cX$, which in some sense corresponds to the knot $K$ under the conifold transition. The difficulty here is that when $\delta\to 0$, the subset $S_\delta\subset \cY_\delta$ shrinks to a point which is the singular point of the conifold $\cY_0$. Since the intersection $\phi_\delta^{-1}(M)\cap S_\delta$ is nonempty, the Lagrangian $\phi_\delta^{-1}(M)$ becomes singular in the limit $\delta\to 0$. So it is not easy to construct a Lagrangian $L_K$ in the resolved conifold $\cX$ which is the ``transition'' of $\phi_\delta^{-1}(M)$.

We follow the solution to the above problem for algebraic knots in
\cite{DSV} (see also \cite{Ko07} for a more general construction), where the knot $K$ is ``lifted''to a path $\gamma$ in $T^*S^3$ which does not intersect with the zero section. The Lagrangian $M$ is also lifted to a new Lagrangian containing $\gamma$ and it does not intersect with the zero section either. Then the ``transition'' $L_K$ of $\phi_\delta^{-1}(M)$ can be naturally constructed. For completeness, we review this process in Section \ref{lift} and Section \ref{brane}.

\subsubsection{Lifting of the torus knots}\label{lift}
We restrict ourselves to torus knots. Let $P,Q$ be two fixed coprime positive integers. Let $f(x,y)=x^P-y^Q$ and consider the algebraic curve
\begin{equation*}
f(x,y)=0
\end{equation*}
in $\bC^2$. For small $r$, the intersection of the curve $f(x,y)=0$ with the 3-sphere $|x|^2+|y|^2=r$ represents a knot in $S^3$ which is called the $(P,Q)$-torus knot. We denote this knot by $K$.

We want to consider the 1-dimensional subvariety $\cZ_\delta\subset \cY_\delta$ defined by the complete intersection of $\cY_\delta$ with
\begin{equation}\label{Z}
f(x,y)=0,\quad f(z,-w)=0.
\end{equation}
The subvariety $\cZ_\delta$ is disconnected in general and its connected components can be described as follows. The Equations \eqref{Z} together with the defining equation of $Y_\delta$ implies
\begin{equation*}
(xz)^P-(\delta-xz)^Q=0.
\end{equation*}
Let $xz=\xi$ and then $\xi$ is a solution to the equation $u^P-(\delta-u)^Q=0$ for $u$. Each solution $\xi$ of this equation determines a connected component of $\cZ_\delta$ of the form
\begin{equation*}
(x,y,z,w)=(t^Q,t^P,\xi t^{-Q},(\delta-\xi)t^{-P})
\end{equation*}
for $t\in\bC^*$.

Since the coefficients of $f$ are real, $\cZ_\delta$ is preserved under the anti-holomorphic involution $I$. Each connected component of the intersection $\cZ_\delta\cap S_\delta$ is isomorphic to the knot $K$ in $S_\delta\cong S^3$. Let
\begin{equation*}
P_a=\{(u,v)\in T^*S^3\mid|v|= a\}
\end{equation*}
be the sphere bundle of radius $a$ in $T^*S^3$ under standard metric of the unit sphere $S^3$. Suppose there exits an irreducible component $C_\delta$ of $\cZ_\delta$ such that the intersection $C_\delta\cap S_\delta$ is isomorphic to the knot $K$ in $S_\delta$. Then for small $a>0$, the intersection $\phi_\delta(C_\delta)\cap P_a$ is nontrivial and the projection $\pi(\phi_\delta(C_\delta)\cap P_a)$ is equal to $\phi_\delta(C_\delta \cap S_\delta) \subset S^3$
which is the torus knot $K$. Here $\pi:T^*S^3 \to S^3$ is the projection map. Let $\gamma_a:S^1\to T^*S^3$ be the path $\phi_\delta(C_\delta)\cap P_a$ and for $t\in S^1$ let $\gamma_a(t)=(g(t),h(t))\in T^*S^3$. Then the path $(g(t),0)\in S^3$ is the knot $K$. The conormal bundle $N^*_K$ is defined as
\begin{equation*}
\{(u,v)\in T^*S^3:u=g(t),\quad\langle v,g'(t)\rangle=0\},
\end{equation*}
where $g'(t)$ is the derivative of $g$ and $\langle ,\rangle$ is the natural pairing between tangent and cotangent vectors. As we discussed in Section \ref{conormal}, the conormal bundle $N^*_K$ is not what we want. Instead, we consider the Lagrangian $M_{\gamma_a}\subset T^*S^3$ defined as
\begin{equation}\label{lag}
\{(u,v)\in T^* S^3:u=g(t),\quad\langle v-h(t),g'(t)\rangle=0\}.
\end{equation}
The Lagrangian $M_{\gamma_a}$ is obtained from $N^*_K$ by fiberwisely translating $N^*_K$ by the cotangent vector $h(t)$. We denote $\phi_\delta^{-1}(M_{\gamma_a})$ by $M_{\delta}$.

When $\delta=0$, $\cZ_0$ has two special irreducible components $C^{\pm}$ defined by
\begin{equation*}
f(x,y)=0,\quad z=w=0
\end{equation*}
and
\begin{equation*}
f(z,-w)=0,\quad x=y=0
\end{equation*}
respectively. Both of $C^{\pm}$ meet the singular point of $\cY_0$ and the anti-holomorphic involution $I_0$ exchanges $C^{\pm}$. Consider the path $\gamma^+$ defined by intersection $\phi_0(C^+\setminus \{0\})\cap P_a$. Then by the construction in Equation (\ref{lag}), we obtain the corresponding Lagrangian $M_{\gamma^+}$ in $T^*S^3$ and we denote $\phi_0^{-1}(M_{\gamma^+})$ by $M_0$. For small $\delta>0$, there exists an irreducible component $C_\delta$ of $\cZ_\delta$ such that there exists a connected component $\gamma_\delta$ of the intersection $\phi_\delta(C_\delta)\cap P_a$ which specializes to $\gamma^+$ when $\delta\to 0$. Therefore we obtain a family of Lagrangians $M_\delta$ which specializes to $M_0$ when $\delta\to 0$.

\subsection{Lagrangians in the resolved conifold}\label{brane}
For $\epsilon\geq 0$, we consider the symplectic form $(\omega_{\bC^4}+\epsilon^2\omega_{\bP^1})$ on $\bC^4\times\bP^1$. By equation (\ref{eqnX}), we can view the resolved conifold $\cX$ as a subvariety in $\bC^4\times\bP^1$. We define the symplectic form $\omega_{\cX,\epsilon}$ (degenerate when $\epsilon=0$) on $\cX$ by
\begin{equation*}
\omega_{\cX,\epsilon}:=(\omega_{\bC^4}+\epsilon^2\omega_{\bP^1})\mid_\cX.
\end{equation*}

Let $B(\epsilon)=\{(y,z)\in\bC^2\mid |y|^2+|z|^2\leq\epsilon^2\}\subset\bC^2$ be the ball of radius $\epsilon$. Consider the radial map $\rho_\epsilon:\bC^2\setminus\{0\}\to\bC^2\setminus B(\epsilon)$,
\begin{equation*}
\rho_\epsilon(y,z)=
\frac{\sqrt{|y|^2+|z|^2+\epsilon^2}}{\sqrt{|y|^2+|z|^2}}(y,z).
\end{equation*}
Let $\varrho_\epsilon
=\mathrm{id}_{\bC^2}\times\rho_\epsilon:\bC^2\times(\bC^2\setminus\{0\})\to
\bC^2\times(\bC^2\setminus B(\epsilon))$. Then $\varrho_\epsilon$ preserves the conifold $\cY_0$ and it maps $\cY_0\setminus\{0\}$ to $\cY_0(\epsilon):=\cY_0\setminus(\cY_0\cap(\bC^2\times B(\epsilon)))$. By the results in \cite{McSa} and \cite{DSV}, the map
\begin{equation*}
\psi_\epsilon:=\varrho_\epsilon\mid_{\cY_0\setminus\{0\}}\circ p\mid_{\cX\setminus\bP^1}: \cX\setminus\bP^1\to \cY_0(\epsilon)
\end{equation*}
is a symplectomorphism.

Define the path $\gamma^+_\epsilon$ by
\begin{equation*}
\gamma^+_\epsilon:=\phi_0\circ\varrho_\epsilon\circ\phi_0^{-1}\circ
\gamma^+:S^1\to T^*S^3.
\end{equation*}
By applying the construction in (\ref{lag}) to the path $\gamma^+_\epsilon$, we obtain a Lagrangian $M_{\gamma^+_\epsilon}$ in $T^*S^3$. Then we define the Lagrangian $L_\epsilon$ in the resolved conifold $\cX$ to be
\begin{equation*}
L_\epsilon:=\psi_\epsilon^{-1}(\phi_0^{-1}(M_{\gamma^+_\epsilon})).
\end{equation*}
The Lagrangian $L_\epsilon$ will be used to define the open Gromov-Witten invariants in Section \ref{sec:A-model}. Sometimes we omit the parameter $\epsilon$ and denote $L_\epsilon$ by $L_K$ for the torus knot $K$, or simply by $L_{P,Q}$. Let $\cX_\epsilon$ be $\cX$ equipped with the K\"ahler structure $\omega_{\cX,\epsilon}$ for $\epsilon\geq 0$, which is degenerate when $\epsilon=0$. We summarize the construction as the following diagram (\cite[Equation 2.17]{DSV}). Notice that the roles of $\cX,\cY$ and $L,M$ are reversed from \cite{DSV}. In the diagram, the wiggly arrow for $\cX_\epsilon$ means changing the K\"ahler structure $\omega_{\cX,\epsilon}$ while keeping the underlying complex manifold $\cX$. The wiggly arrow for $\cY_\delta$ means changing the deformation parameter $\delta$, which causes $\cY_\delta$ to collapse to $\cY_0$ when $\delta=0$. Here $p\vert_{L_0}$ is a diffeomorphism between $L_0$ and $M_0$.

\begin{equation*}
  \raisebox{-0.2\height}{\includegraphics{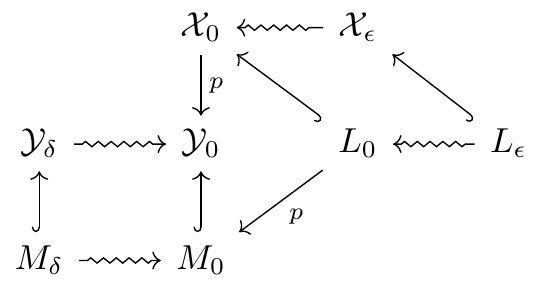}}
\end{equation*}

\section{Topological A-strings in the resolved conifold: Gromov-Witten theory}
\label{sec:A-model}

\subsection{Equivariant cohomology of the resolved conifold}
Let $\cX\cong[\cO_{\bP^1}(-1)\oplus\cO_{\bP^1}(-1)\to \bP^1]$ be the
resolved conifold given by the equation $xs=wt,\ ys=zt$ where $[s:t]$ are the homogeneous coordinates on $\bP^1$. Consider the 1-dimensional torus $\T\cong\bC^*$ which acts on $\cX$ in the following way. Let torus $\T$ act on $\bP^1$ by
\begin{equation*}
t\cdot[s:t]=[t^{-P-Q}s:t].
\end{equation*}
Then there are two $\T-$fixed points $\fp_0=[0:1],\fp_1=[1:0]$. Let $\iota_0:\fp_0\hookrightarrow \cX$ and $\iota_1:\fp_1\hookrightarrow \cX$ be the inclusion maps of $\fp_0$ and $\fp_1$ respectively.

We lift this action to an $\T$ action on $\cX$ by choosing the weights of the fibers of each $\cO(-1)$ at $\fp_0,\fp_1$ to be $P,-Q$ and $Q,-P$ respectively. Let
\begin{equation}
\Rtt:=H_{\T}^*(\pt;\bC)=\bC[\sv]
\end{equation}
be the $\T-$equivariant cohomology of a point. Then the $\T-$equivariant cohomology ring $H_{\T}^*(\cX;\bC)$ can be written as
\begin{equation}
H_{\T}^*(\cX;\bC)=\bC[\sv,\HT]/\langle \HT(\HT-(P+Q)\sv)\rangle.
\end{equation}
Here we have $\deg \HT=\deg \sv=2$ and $\HT\mid_{\fp_0}=0, \HT\mid_{\fp_1}=(P+Q)\sv$.

On the other hand, we define
\begin{eqnarray*}
\phi_0 :&=&\frac{[\fp_0]}{e_{T_{P,Q}}(T_{p_0}\cX)}
=\frac{(\HT-(P+Q)\sv)(\HT-P\sv)(\HT-Q\sv)}{-(P+Q)\sv P\sv Q\sv}=\frac{\HT-(P+Q)\sv}{-(P+Q)\sv}\\
\phi_1 :&=&\frac{[\fp_1]}{e_{T_{P,Q}}(T_{p_1}\cX)}
=\frac{\HT(\HT-P\sv)(\HT-Q\sv)}{(P+Q)\sv P\sv Q\sv}=\frac{\HT}{(P+Q)\sv},
\end{eqnarray*}
where $[\mathfrak p_\alpha]$ is the equivariant Poincar\'e dual of $\mathfrak p_\alpha$. Then $\{\phi_0, \phi_1\}$ is a basis of $H_{\T}^*(\cX;\bC)\otimes_{\bC[\sv]}\bC(\sv)$. We have
\begin{equation}
\phi_\alpha \cup \phi_\beta = \delta_{\alpha \beta}\phi_\alpha.
\end{equation}
Therefore $\{\phi_0, \phi_1\}$ is a canonical basis of $H_{\T}^*(\cX;\bC)\otimes_{\bC[\sv]}\bC(\sv)$. The $\T-$equivariant Poincar\`e pairing is given by
\begin{equation}
(\phi_\alpha, \phi_\beta)_{\T} = \frac{\delta_{\alpha \beta}}{\Delta^\alpha},\qquad\alpha,\beta\in\{0,1\},
\end{equation}
where $\Delta^\alpha=e_{T_{P,Q}}(T_{\mathfrak p_\alpha}\cX)=(-1)^{\alpha+1}(P+Q) P Q\sv^3,\alpha=0,1$.

The dual basis $\{\phi_\alpha\}$ are $\{\phi^\alpha=\Delta^\alpha\phi_\alpha =[\mathfrak p_\alpha]\}$ under the $\T-$equivariant Poincar\`e pairing. The normalized canonical basis $\{\hat\phi_0, \hat\phi_1\}$ is defined to be $\hat\phi_\alpha=\sqrt{\Delta^\alpha}\phi_\alpha$. Then we have
\begin{equation}
(\hat\phi_\alpha, \hat\phi_\beta)_{\T} = \delta_{\alpha \beta},\qquad\alpha,\beta\in\{0,1\}.
\end{equation}
Let $\bStt$ be a finite extension of the field $\bC(\sv)$ by including $\sqrt{\Delta^\alpha},\alpha=0,1$. Then $\{\hat\phi_0, \hat\phi_1\}$ is a basis of
\begin{equation}
H_{\T}^*(\cX;\bC)\otimes_{\bC[\sv]}\bStt.
\end{equation}

\subsection{Equivariant Gromov-Witten invariants and their generating functions}
Let $d\in H_2(\cX,\bZ)$ be an effective curve class. Let $\Mbar_{g,n}(\cX,d)$ be the moduli space of genus $g$, n-pointed, degree $d$ stable maps to $\cX$. Given $\gamma_1,\ldots, \gamma_n\in H_\T^*(\cX,\bC)$ and $a_1,\ldots,a_n\in \bZ_{\geq 0}$, we define genus $g$, degree $d$ $\T$-equivariant descendant Gromov-Witten invariants of $\cX$:
$$
\langle \tau_{a_1}(\gamma_1)\ldots\tau_{a_n}(\gamma_n)\rangle_{g,n,d}^{\cX,\T}:=
\int_{[\Mbar_{g,n}(\cX,d)^\T]^\vir} \frac{\prod_{j=1}^n \psi_j^{a_j}\ev_j^*(\gamma_j)\mid_{\Mbar_{g,n}(\cX,d)^\T}}{e_\T(N^\vir)}
$$
where $\ev_j: \Mbar_{g,n}(X,d) \to \cX$ is the evaluation at the $j$-th marked point,
which is a $\T$-equivariant map, $\Mbar_{g,n}(\cX,d)^\T$ is the $\T$-fixed locus, and $e_\T(N^\vir)$ is the $\T$-equivariant Euler class of the virtual normal bundle. We also define genus $g$, degree $d$ primary Gromov-Witten invariants:
$$
\langle \gamma_1,\ldots, \gamma_n\rangle_{g,n,d}^{\cX,\T}:=
\langle \tau_0(\gamma_1) \cdots \tau_0(\gamma_n)\rangle_{g,n,d}^{\cX,\T}.
$$

Define the Novikov ring
\[
\nov:=\widehat{\bC[E(\cX)]}=\{\sum_{d\in E(\cX)} c_d \fQ^d: c_d\in \bC\},
\]
where $E(\cX)$ is the set of effective curve classes which is identified with the set of nonnegative integers in our case. We also define the following double correlator with primary insertions:
\begin{eqnarray*}
\llangle \gamma_1\psi^{a_1},  \cdots, \gamma_n\psi^{a_n} \rrangle^{\cX,\T}_{g,n}
:=\sum_{m=0}^\infty \sum_{d\in E(\cX)}\frac{\fQ^d}{m!}\langle
\gamma_1\psi^{a_1}, \cdots, \gamma_n\psi^{a_n}, t^m \rangle^{\cX,\T}_{g,n+m,d}
\end{eqnarray*}
where $\fQ^d\in \bC[E(\cX)]\subset \nov$ is the Novikov variable corresponding to
$d\in E(\cX)$, and $t\in H^*_{\T}(\cX;\bC)\otimes_\Rtt \bStt$. Let $t=t^0 1 + t^1 \HT=\that^0\phihat_0+\that^1\phihat_1$. As a convention, we use $\btau\in H^*_\T(\cX;\bC)$ to denote a class in degree $2$. Then $\btau=\tau_0 1 +\tau_1 \H^{T_{P,Q}}$ where $\tau_1\in \bC$, and $\tau_0$ is degree $2$ in $H^*_{\T}(\pt)$.

For $j=1,\ldots,n$, introduce formal variables
$$
\bu_j =\bu_j(z)= \sum_{a\geq 0}(u_j)_a z^a
$$
where $(u_j)_a \in H^*_{\T}(\cX;\bC)\otimes_\Rtt\bStt$.
Define
\begin{align*}
\llangle \bu_1,\ldots, \bu_n \rrangle_{g,n}^{\cX,\T} &=
\llangle \bu_1(\psi),\ldots, \bu_n(\psi) \rrangle_{g,n}^{\cX,\T}\\
&=\sum_{a_1,\ldots,a_n\geq 0}
\llangle (u_1)_{a_1}\psi^{a_1}, \cdots, (u_n)_{a_n}\psi^{a_n}\rrangle_{g,n}^{\cX,\T}.
\end{align*}

\subsection{The equivariant quantum cohomology of $X$}\label{sec:QH}
In order to define the equivariant quantum cohomology of $\cX$, we consider the three-point double correlator $\llangle a,b,c\rrangle_{0,3}^{\cX,\T}$ for $a,b,c \in H^*_{\T}(\cX)\otimes_\Rtt \bStt$. Then by divisor equation, we have
\begin{equation}
\llangle a,b,c\rrangle_{0,3}^{\cX,\T}\in \bStt \llbracket \tfQ \rrbracket,
\end{equation}
where $\tfQ=\fQ e^{t^1}$. We define quantum product $\star_t$ by
\begin{equation}
(a\star_t b,c)_{\cX,\T}:=\llangle a,b,c\rrangle_{0,3}^{\cX,\T}.
\end{equation}

Set
$$
\novtt:= \bStt\otimes_{\bC}\nov =\bStt \llbracket E(\cX)\rrbracket.
$$
Then $H:=H^*_{\T}(\cX;\novtt)$ is a free $\novtt$-module of rank $2$.
Any point $t\in H$ can be written as  $t=\sum_{\alpha=0}^{1}\that^{\alpha} \hat{\phi}_{i}$
. We have
$$
H=\mathrm{Spec}(\novtt [ \that^{0},\that^{1}]).
$$
Let $\hat{H}$ be the formal completion of $H$ along the origin:
$$
\hat{H} :=\mathrm{Spec}(\novtt\llbracket \that^{0},\that^{1}\rrbracket).
$$
Let $\cO_{\hat{H}}$ be the structure sheaf on $\hat{H}$, and let $\cT_{\hat{H}}$ be the tangent sheaf on
$\hat{H}$.
Then $\cT_{\hat{H}}$ is a sheaf of free $\cO_{\hat{H}}$-modules of rank $2$. Given an open set in $\hat{H}$,
$$
\cT_{\hat{H}}(U)  \cong \bigoplus_{\alpha}\cO_{\hat{H}}(U) \frac{\partial}{\partial \that^{\alpha}}.
$$
The quantum product and the $\T$-equivariant Poincar\'{e} pairing defines the structure of a formal
Frobenius manifold on $\hat{H}$:
$$
\frac{\partial}{\partial \that^{\alpha}} \star_t \frac{\partial}{\partial \that^{\alpha'}}
=\sum_{\beta} \llangle \hat{\phi}_{\alpha},\hat{\phi}_{\alpha'},\hat{\phi}_{\beta}\rrangle_{0,3}^{\cX,\T}
\frac{\partial}{\partial \that^{\beta}}
\in \Gamma(\hat{H}, \cT_{\hat{H}}).
$$
$$
( \frac{\partial}{\partial \that^{\alpha}},\frac{\partial}{\partial \that^{\alpha,}})_{\cX,\T} =\delta_{\alpha,\alpha'}.
$$

The set of global sections $\Gamma(\hat{H},\cT\hat{H})$ is a free $\cO_{\hat{H}}(\hat{H})$-module of rank $2$:
$$
\Gamma(\hat{H},\cT\hat{H})=
\bigoplus_{\alpha=0,1}\cO_{\hat{H}}(\hat{H})\frac{\partial}{\partial \that^{\alpha}}.
$$
Under the quantum product $\star_t$, the triple $(\Gamma(\hat{H},\cT\hat{H}),\star_t,(, )_{\cX,\T})$ is a Frobenius algebra over the ring $\cO_{\hat{H}}(\hat{H})=\novtt\llbracket \that^{0},\that^{1}\rrbracket$. The triple $(\Gamma(\hat{H},\cT\hat{H}),\star_t,(, )_{\cX,\T})$ is called the \emph{quantum cohomology} of $\cX$ and is denoted by $QH^*_{\T}(\cX)$.

The semi-simplicity of the classical cohomology $H^*_{\T}(\cX;\bC)\otimes_\Rtt \bStt$ implies the semi-simplicity of the quantum cohomology $QH^*_{\T}(\cX)$. In fact, there exists a canonical basis $\{\phi_0(t),\phi_1(t)\}$ of $QH^*_{\T}(\cX)$ characterized by the property that
\begin{equation}
\phi_\alpha(t)\to  \phi_\alpha,\quad\mathrm{when }\quad t,\fQ\to 0,\quad \alpha=0,1.
\end{equation}
We denote $\{\phi^\alpha(t)\}$ to be the dual basis to $\{\phi_\alpha(t)\}$ with respect to the metric $(,)_{\cX,\T}$. See \cite{LP} for more general discussions on the canonical basis.

\subsection{The A-model canonical coordinates and the $\Psi$-matrix}\label{sec:A-canonical}
The canonical coordinates $\{ u^{\alpha}=u^{\alpha}(t)|\alpha=0,1\}$ on the formal Frobenius
manifold $\hat{H}$ are characterized by
\begin{equation}\label{eqn:partial-u}
\frac{\partial}{\partial u^{\alpha}} = \phi_{\alpha}(t).
\end{equation}
up to additive constants in $\novtt$. We choose canonical coordinates
such that they lie in $\bStt[t^1]\llbracket {\tfQ},t^0\rrbracket$ and vanish when $\fQ=0$,
$t^0=t^1=0$. Then $u^{\alpha}-\sqrt{\Delta^{\alpha}}t^{\alpha} \in \bStt[t^1]\llbracket \tfQ,t^0\rrbracket$
and vanish when $\tfQ=0$, $t^0=0$.

We define $\Delta^{\alpha}(t)\in \bStt\llbracket \tfQ,t^0\rrbracket$ by the following equation:
$$
(\phi_{\alpha}(t), \phi_{\alpha'}(t))_{\cX,\T} =\frac{\delta_{\alpha,\alpha'}}{\Delta^{\alpha}(t)}.
$$
Then $\Delta^{\alpha}(t) \to \Delta^{\alpha}$  in the large radius limit
$\tfQ,t^0\to 0$. The normalized canonical basis of $(\hat{H},\star_t)$ is
$$
\{ \hat{\phi}_{\alpha}(t):= \sqrt{\Delta^{\alpha}(t)}\phi_{\alpha}(t)| \alpha=0,1\}.
$$
They satisfy
$$
\hat{\phi}_{\alpha}(t)\star_t \hat{\phi}_{\alpha'}(t) =\delta_{\alpha, \alpha'}\sqrt{\Delta^{\alpha}(t)}\hat{\phi}_{\alpha}(t),\quad
(\hat{\phi}_{\alpha}(t), \hat{\phi}_{\alpha'}(t))_{\cX,\T}=\delta_{\alpha,\alpha'}.
$$
(Note that $\sqrt{\Delta^{\alpha}(t)}= \sqrt{\Delta^{\alpha}} \cdot
\sqrt{\frac{ \Delta^{\alpha}(t) }{\Delta^{\alpha}} }$,
where
$\sqrt{\Delta^{\alpha}}\in \bStt$ and
$\sqrt{\frac{ \Delta^{\alpha}(t) }{\Delta^{\alpha}} }\in \bStt\llbracket\tfQ,t^0\rrbracket$,
so $\sqrt{\Delta^{\alpha}(t)}\in \bStt\llbracket\tfQ,t^0\rrbracket$.)
We call $\{\hat{\phi}_{\alpha}(t)|\alpha=0,1\}$ the {\em quantum} normalized canonical basis
to distinguish it from the  {\em classical} normalized canonical basis $\{ \hat{\phi}_{\alpha}|\alpha =0,1\}$.
The quantum canonical basis tends to the classical canonical
basis in the large radius limit: $\hat{\phi}_{\alpha}(t)\to \hat{\phi}_{\alpha}$ as $\tfQ,t^0\to 0$.

Let $\Psi=(\Psi_{\alpha'}^{\  \alpha})$ be the transition matrix between the classical and quantum
normalized canonical bases:
\begin{equation}\label{eqn:Psi-phi}
\hat{\phi}_{\alpha'}=\sum_{\alpha=0,1} \Psi_{\alpha'}^{\ \alpha} \hat{\phi}_\alpha(t).
\end{equation}
Then $\Psi$ is an $2\times 2$ matrix with entries
in $\bStt\llbracket\tfQ,t^0\rrbracket$, and $\Psi\to \one$ (the identity matrix)
in the large radius limit $\tfQ,t^0\to 0$. Both
the classical and quantum normalized canonical bases are orthonormal with respect
to the $\T$-equivariant Poincar\'{e} pairing $(\ ,\ )_{\cX,\T}$, so $\Psi^T\Psi= \Psi \Psi^T= \one$,
where $\Psi^T$ is the transpose of $\Psi$, or equivalently
$$
\sum_{\beta=0,1} \Psi_{\beta}^{\ \alpha} \Psi_{\beta}^{\ \alpha'} =\delta_{\alpha,\alpha'}
$$
Equation \eqref{eqn:Psi-phi} can be rewritten as
$$
\frac{\partial}{\partial \that^{\alpha'}} =\sum_{\alpha=0,1} \Psi_{\alpha'}^{\ \alpha}
\sqrt{\Delta^{\alpha}(t)} \frac{\partial}{\partial u^{\alpha}}
$$
which is equivalent to
\begin{equation}
\label{eqn:Psi-matrix}
\frac{du^{\alpha}}{\sqrt{\Delta^{\alpha}(t)}} =
\sum_{\alpha'=0,1}
d\that^{\alpha'} \Psi_{\alpha'}^{\ \alpha}.
\end{equation}

\subsection{The equivariant quantum differential equation}
We consider the Dubrovin connection $\nabla^z$, which is a family
of connections parametrized by $z\in \bC\cup \{\infty\}$, on the tangent bundle
$T_{\hat{H}}$ of the formal Frobenius manifold $\hat{H}$:
$$
\nabla^z_{\alpha}=\frac{\partial}{\partial \hat t^{\alpha}} -\frac{1}{z} \hat{\phi}_{\alpha}\star_t
$$
The commutativity (resp. associativity)
of $*_t$ implies that $\nabla^z$ is a torsion
free (resp. flat) connection on $T_{\hat{H}}$ for all $z$. The equation
\begin{equation}\label{eqn:qde}
\nabla^z \mu=0
\end{equation}
for a section $\mu\in \Gamma(\hat{H},\cT_{\hat{H}})$ is called the {\em $\T$-equivariant
quantum differential equation} ($\T$-equivariant QDE). Let
$$
\cT_{\hat{H}}^{f,z}\subset \cT_{\hat{H}}
$$
be the subsheaf of flat sections with respect to the connection $\nabla^z$.
For each $z$, $\cT_{\hat{H}}^{f,z}$ is a sheaf of
$\novtt$-modules of rank $2$.

A section $L\in \End(T_{\hat{H}})=\Gamma(\hat{H},\cT_{\hat{H}}^*\otimes\cT_{\hat{H}})$
defines a $\cO_{\hat{H}}(\hat{H})$-linear map
$$
L: \Gamma(\hat{H},\cT_{\hat{H}})= \bigoplus_{\alpha} \cO_{\hat{H}}(\hat{H})
\frac{\partial}{\partial \that^{\alpha}}
\to \Gamma(\hat{H},\cT_{\hat{H}})
$$
from the free $\cO_{\hat{H}}(\hat{H})$-module $\Gamma(\hat{H},\cT_{\hat{H}})$ to itself.
Let $L(z)\in \End(T_{\hat H})$ be a family of endomorphisms of the tangent bundle $T_{\hat{H}}$
parametrized by $z$. $L(z)$ is called a {\em fundamental solution} to the $\T$-equivariant QDE if
the $\cO_{\hat{H}}(\hat{H})$-linear map
$$
L(z): \Gamma(\hat{H},\cT_{\hat{H}}) \to \Gamma(\hat{H},\cT_{\hat{H}})
$$
restricts to a $\novtt$-linear isomorphism
$$
L(z): \Gamma(\hat{H},\cT_H^{f,\infty})=\bigoplus_{\alpha} \novtt \frac{\partial}{\partial \that^{\alpha}}
\to \Gamma(\hat{H},\cT_H^{f,z}).
$$
between rank $2$ free $\novtt$-modules.

\subsection{The $\cS$-operator}\label{sec:A-S}
The $\cS$-operator is defined as follows.
For any cohomology classes $a,b\in H_{\T}^*(\cX;\bStt)$,
$$
(a,\cS(b))_{\cX,\T}=(a,b)_{\cX,\T}
+\llangle a,\frac{b}{z-\psi}\rrangle^{\cX,\T}_{0,2}
$$
where
$$
\frac{b}{z-\psi}=\sum_{i=0}^\infty b\psi^i z^{-i-1}.
$$
The $\cS$-operator can be viewed as an element in $\End(T_{\hat{H}})$ and is a fundamental solution to the $\T$-equivariant
big QDE \eqref{eqn:qde}.  The proof for $\cS$ being a fundamental solution can be found in \cite{CK}.

\begin{remark}
One may notice that since there is a formal variable $z$ in the definition of
the $\T$-equivariant big QDE \eqref{eqn:qde}, one can consider its solution space over different rings. Here the operator
$\cS= \one+ \cS_1/z+ \cS_2/z^2+\cdots$ is viewed as a formal power series in $1/z$ with operator-valued coefficients.
\end{remark}

\begin{remark}\label{Novikov}
By divisor equation and string equation, we have
$$
(a,b)_{\cX,\T}+\llangle a,\frac{b}{z-\psi}\rrangle^{\cX,\T}_{0,2}=(a,be^{(t^0 1+t^1H)/z})_{\cX,\T}+
 \sum_{d>0}\fQ^de^{dt^1}\langle a,\frac{be^{(t^0 1+t^1H)/z}}{z-\psi}\rangle^{\cX,\T}_{0,2,d}.
$$
In the above expression, if we fix the power of $z^{-1}$, then only finitely many terms in the expansion of $e^{(t^0 1+t^1H)/z}$ contribute. Therefore, the factor $e^{dt^1}$ can play the role of $\fQ^d$ and hence the restriction $\llangle a,\frac{b}{z-\psi}\rrangle^{\cX,\T}_{0,2}|_{\fQ=1}$ is well-defined. So the operator $\cS|_{\fQ=1}$ is well-defined.
\end{remark}

\begin{definition}[$\T$-equivariant $J$-function] \label{big-J}
The $\T$-equivariant big $J$-function $J_{\T}(z)$  is characterized by
$$
(J_{\T}(z),a)_{\cX,\T} = (1,\cS(a))_{\cX,\T}
$$
for any $a\in H_{\T}^*(X;\bStt)$. Equivalently,
$$
J_{\T}(z)= 1+ \sum_{\alpha}\llangle 1, \frac{\hat{\phi}_{\alpha}}{z-\hat{\psi}}\rrangle_{0,2}^{\cX,\T}\hat{\phi}_{\alpha}.
$$
\end{definition}

We consider several different (flat) basis for $H_{\T}^*(\cX;\bStt)$:
\begin{enumerate}
\item The classical canonical basis $\{ \phi_{\alpha}|\alpha=0,1 \}$.
\item The basis dual to the classical canonical basis with respect to the $\T$-equivariant Poincar\`e pairing:
$\{ \phi^{\alpha} =\Delta^{\alpha} \phi_{\alpha}|\alpha=0,1 \} $.
\item The classical normalized canonical basis
$\{ \hat{\phi}_{\alpha}=\sqrt{\Delta^{\alpha}}\phi_{\alpha} |\alpha=0,1\}$ which is self-dual: $\{ \hat{\phi}^{\alpha}=\hat{\phi}_{\alpha}|\alpha=0,1 \}$.
\end{enumerate}

For $\alpha, \alpha'\in \{0,1\}$, define
$$
S^{\alpha'}_{\ \alpha}(z) := (\phi^{\alpha}, \cS(\phi_{\alpha})).
$$
Then $(S^{\alpha'}_{\ \alpha}(z))$ is the matrix  of the $\cS$-operator with respect to the canonical basis
$\{\phi_{\alpha}|\alpha=0,1 \}$:
\begin{equation}\label{eqn:S}
\cS(\phi_{\alpha}) =\sum_{\alpha'=0,1}
\phi_{\alpha'} S^{\alpha'}_{\ \alpha}(z).
\end{equation}

For $\alpha,\alpha'\in \{0,1\}$, define
$$
S_{\alpha'}^{\ \widehat{\alpha} }(z) := (\phi_{\alpha'}, \cS(\hat{\phi}^{\alpha})).
$$
Then $(S_{\alpha'}^{\  \widehat{\alpha}})$ is the matrix of the $\cS$-operator
with respect to the bases $\{\hat{\phi}^{\alpha}|\alpha=0,1\}$ and
$\{\phi^{\alpha}|\alpha=0,1\}$:
\begin{equation}\label{eqn:barS}
\cS(\hat{\phi}^{\alpha})=\sum_{\alpha'=0,1} \phi^{\alpha'}
 S_{\alpha'}^{\ \widehat{\alpha}}(z).
\end{equation}

Introduce
\begin{align*}
S_z(a,b)&=(a,\cS(b))_{\cX,\T},\\
V_{z_1,z_2}(a,b)&=\frac{(a,b)_{\cX,\T}}{z_1+z_2}+\llangle \frac{a}{z_1-\psi_1},
                  \frac{b}{z_2-\psi_2}\rrangle^{\cX,\T}_{0,2}.
\end{align*}
A well-known WDVV-like argument says
\begin{equation}
\label{eqn:two-in-one}
V_{z_1,z_2}(a,b)=\frac{1}{z_1+z_2}\sum_i S_{z_1}(T_i,a)S_{z_2}(T^i,b),
\end{equation}
where $T_i$ is any basis of $H^*_{\T}(\cX;\bStt)$ and $T^i$ is its dual basis.
In particular,
$$
V_{z_1,z_2}(a,b)=\frac{1}{z_1+z_2}\sum_{\alpha=0,1} S_{z_1}(\hat{\phi}_{\alpha},a)S_{z_2}(\hat{\phi}_{\alpha},b).
$$

\subsection{The A-model $R$-matrix}
Let $U$ denote the diagonal matrix whose diagonal entries are the canonical coordinates.
The results in \cite{Gi01a} imply the following statement.

\begin{theorem}\label{R-matrix}
There exists a unique matrix power series $R(z)= \one + R_1z+R_2 z^2+\cdots$
satisfying the following properties.
\begin{enumerate}
\item The entries of $R_d$ lie in $\bStt\llbracket\tfQ,t^0\rrbracket$.
\item $\tS=\Psi R(z) e^{U/z}$  is a fundamental solution to the $\T$-equivariant
QDE \eqref{eqn:qde}.
\item $R$ satisfies the unitary condition $R^T(-z)R(z)=\one$.
\item
\begin{equation}\label{eqn:R-at-zero}
\begin{aligned}
\lim_{\tfQ,t^0\to 0} R_\alpha^{\,\ \beta}(z) =\delta_{\alpha\beta} \prod_{i=1}^3\exp\big(-\sum_{n=1}^\infty \frac{B_{2n}}{2n(2n-1)}
(\frac{z}{\sw_i(\alpha)})^{2n-1} \big),
\end{aligned}
\end{equation}
where $\sw_1(0)=-(P+Q)\sv,\sw_2(0)=P\sv,\sw_3(0)=Q\sv,\sw_1(1)=(P+Q)\sv,
\sw_2(1)=-P\sv,\sw_3(1)=-Q\sv.$
\end{enumerate}
\end{theorem}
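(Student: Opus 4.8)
The plan is to obtain the statement as an application of Givental's theory of the asymptotic fundamental solution at a semisimple point \cite{Gi01a}, reserving genuine computation for the large radius limit in part (4). The starting point is that the quantum cohomology $QH^*_\T(\cX)$ is semisimple (as recorded in Section \ref{sec:QH}), so over the formal neighborhood $\hat H$ the canonical coordinates $u^0, u^1$ are well defined and the difference $u^0 - u^1$ is invertible in $\bStt\llbracket\tfQ, t^0\rrbracket$: it equals the difference of the classical canonical coordinates in the large radius limit, and invertibility there is an open condition propagating over the formal disk. Givental's theory then asserts that the $\T$-equivariant QDE \eqref{eqn:qde} admits a formal fundamental solution of the asymptotic shape $\Psi R(z)e^{U/z}$, with $\Psi$ the transition matrix of Section \ref{sec:A-canonical} and $R(z)=\one+R_1 z+\cdots$; properties (1)--(3) are exactly the existence, ring-membership, and unitarity content of that theory. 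I would present this carefully and then verify (4) by hand.

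To see where $R$ comes from and why it is determined, substitute $\tS=\Psi R(z)e^{U/z}$ into the flat-section equation $\partial_{\hat t^{\alpha}}\tS=\frac1z(\hat\phi_\alpha\star_t)\tS$. Conjugation by $\Psi$ diagonalizes quantum multiplication, so that $\Psi^{-1}\mathcal A\,\Psi=dU$ where $\mathcal A=\sum_\alpha(\hat\phi_\alpha\star_t)\,d\hat t^\alpha$ (a consequence of \eqref{eqn:Psi-matrix}), and the equation collapses to
\begin{equation*}
z\,dR=[dU,R]-z\,(\Psi^{-1}d\Psi)\,R.
\end{equation*}
Matching powers of $z$ yields the recursion $[dU,R_{d+1}]=dR_d+(\Psi^{-1}d\Psi)R_d$ with $R_0=\one$. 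The off-diagonal entries of $R_{d+1}$ are solved algebraically, the $(\alpha,\beta)$ entry being the quotient of the right-hand side by $du^\alpha-du^\beta$, which is legitimate precisely because $u^\alpha-u^\beta$ is invertible; the diagonal entries are recovered by integration. Tracking the recursion shows inductively that every entry lies in $\bStt\llbracket\tfQ,t^0\rrbracket$, giving (1), and that the resulting $\tS$ is a fundamental solution, giving (2).

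Uniqueness and the unitary condition (3) go together. The integration constants left free on the diagonal are pinned down by imposing $R^T(-z)R(z)=\one$; this relation reflects the orthonormality of both the classical and quantum normalized canonical frames for $(\,,\,)_{\cX,\T}$ (equivalently, the pairing-compatibility of the Dubrovin connection), and together with the normalization $R\to\one$ as $\tfQ,t^0\to 0$ it fixes $R(z)$ uniquely. I would invoke the corresponding statements of \cite{Gi01a} rather than rederive them.

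The substantive step is the large radius limit (4). As $\tfQ,t^0\to 0$ one has $\Psi\to\one$ and $\star_t$ degenerates to the classical cup product, so $QH^*_\T(\cX)$ localizes to its two isolated fixed points $\fp_0,\fp_1$; correspondingly the limiting $R$-matrix is diagonal, its $\alpha$-th entry being the asymptotic fundamental solution of the degree-zero, fixed-point theory at $\fp_\alpha$. That theory is the equivariant Gromov--Witten theory of $T_{\fp_\alpha}\cX\cong\bC^3$ with torus weights $\sw_1(\alpha),\sw_2(\alpha),\sw_3(\alpha)$, whose $R$-matrix factorizes as a product of three one-dimensional factors, one per weight. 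Each factor is the $R$-matrix of equivariant $\bC$ with a single weight $w$, namely the Bernoulli exponential $\exp\!\big(-\sum_{n\geq 1}\frac{B_{2n}}{2n(2n-1)}(z/w)^{2n-1}\big)$ coming from the Euler--Maclaurin (equivalently $\log\Gamma$) asymptotics of the equivariant $\bC$ solution; substituting the listed weights gives \eqref{eqn:R-at-zero}. The hard part will be making this factorization rigorous: justifying that the large radius limit of the intrinsically defined $R$-matrix of $\cX$ coincides with the product of point $R$-matrices at the fixed points, and identifying the single-weight factor with the Bernoulli exponential. I would handle the former via the localization of the $\cS$-operator at $\fQ=0$ and the latter by the explicit asymptotics of the equivariant $\bC$ solution, both now standard in the Givental--Teleman framework.
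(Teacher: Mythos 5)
The paper offers no proof of this theorem beyond the sentence ``The results in \cite{Gi01a} imply the following statement,'' and your proposal is essentially the standard argument that citation stands for: semisimplicity yields the recursion $[dU,R_{d+1}]=dR_d+(\Psi^{-1}d\Psi)R_d$, unitarity plus the normalization at the large radius limit pins $R$ down uniquely, and the limit \eqref{eqn:R-at-zero} is the product of Bernoulli exponentials obtained by localizing to the two fixed points and invoking the $\log\Gamma$ asymptotics of the equivariant $\bC$ solution. One small imprecision: the division in the off-diagonal step is legitimate because the one-form $du^0-du^1$ has an invertible leading coefficient (namely $\sqrt{\Delta^\alpha}\neq 0$, i.e.\ semisimplicity), not because the function $u^0-u^1$ is invertible in $\bStt\llbracket\tfQ,t^0\rrbracket$ --- by the paper's own normalization the canonical coordinates vanish at $\fQ=0$, $t^0=t^1=0$, so that difference is not a unit there.
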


Each matrix in (2) of Theorem \ref{R-matrix} represents an operator with respect to the classical canonical basis  $\{ \hat{\phi}_{\alpha}|\alpha=0,1\}$.
So $R^T$ is the adjoint of $R$ with respect to the $\T$-equivariant
Poincar\'{e} pairing $(\ , \ )_{\cX,\T}$.

We call the unique $R(z)$ in Theorem \ref{R-matrix} the {\em A-model $R$-matrix}.
The A-model $R$-matrix plays a central role in the quantization formula of the descendant potential of $\T$-equivariant Gromov-Witten
theory of $\cX$. We will state this formula in terms of graph sum in the the next subsection.

\subsection{The A-model graph sum} \label{sec:Agraph}
For $\alpha=0,1$, let
$$
\hat{\phi}_\alpha(t):= \sqrt{\Delta^\alpha(t)}\phi_\alpha(t).
$$
Then $\hat{\phi}_1(t)$, $\hat{\phi}_2(t)$ is the normalized canonical basis
of $QH_\T^*(\cX)$, the $\T$-equivariant quantum cohomology of $\cX$. Define
$$
{S}^{\hualpha}_{\, \ \hubeta}(z) := (\hat{\phi}_\alpha(t), \cS(\hat{\phi}_\beta(t))).
$$
Then $({S}^{\underline{\hat \alpha}}_{\, \ \underline{\hat{\beta}}}(z))$ is the matrix of the $\cS$-operator with
respect to the ordered basis $(\hat{\phi}_1(t),\hat{\phi}_2(t))$:
\begin{equation}\label{eqn:mathringS}
\cS(\hat{\phi}_\beta(t))=\sum_{\alpha=0}^1 \hat{\phi}_\alpha(t) {S}^\hualpha_{\,\ \hubeta}(z).
\end{equation}

Given a connected graph $\Ga$, we introduce the following notation.
\begin{enumerate}
\item $V(\Ga)$ is the set of vertices in $\Ga$.
\item $E(\Ga)$ is the set of edges in $\Ga$.
\item $H(\Ga)$ is the set of half edges in $\Gamma$.
\item $L^o(\Ga)$ is the set of ordinary leaves in $\Ga$.
\item $L^1(\Ga)$ is the set of dilaton leaves in $\Ga$.
\end{enumerate}

With the above notation, we introduce the following labels:
\begin{enumerate}
\item (genus) $g: V(\Ga)\to \bZ_{\geq 0}$.
\item (marking) $\beta: V(\Ga) \to \{0,1\}$. This induces
$\beta:L(\Ga)=L^o(\Ga)\cup L^1(\Ga)\to \{0,1\}$, as follows:
if $l\in L(\Ga)$ is a leaf attached to a vertex $v\in V(\Ga)$,
define $\beta(l)=\beta(v)$.
\item (height) $k: H(\Ga)\to \bZ_{\geq 0}$.
\end{enumerate}

Given an edge $e$, let $h_1(e),h_2(e)$ be the two half edges associated to $e$. The order of the two half edges does not affect the graph sum formula in this paper. Given a vertex $v\in V(\Ga)$, let $H(v)$ denote the set of half edges
emanating from $v$. The valency of the vertex $v$ is equal to the cardinality of the set $H(v)$: $\val(v)=|H(v)|$.
A labeled graph $\vGa=(\Ga,g,\beta,k)$ is {\em stable} if
$$
2g(v)-2 + \val(v) >0
$$
for all $v\in V(\Ga)$.

Let $\bGa(X)$ denote the set of all stable labeled graphs
$\vGa=(\Gamma,g,\beta,k)$. The genus of a stable labeled graph
$\vGa$ is defined to be
$$
g(\vGa):= \sum_{v\in V(\Ga)}g(v)  + |E(\Ga)|- |V(\Ga)|  +1
=\sum_{v\in V(\Ga)} (g(v)-1) + (\sum_{e\in E(\Gamma)} 1) +1.
$$
Define
$$
\bGa_{g,n}(\cX)=\{ \vGa=(\Gamma,g,\beta,k)\in \bGa(X): g(\vGa)=g, |L^o(\Ga)|=n\}.
$$
Given $\alpha\in \{0,1\}$ and $j\in\{1,\cdots,n\}$, define
$$
\bu_j^\alpha(z) = \sum_{a\geq 0} (u_j)^\alpha_a z^a.
$$

We assign weights to leaves, edges, and vertices of a labeled graph $\vGa\in \bGa(\cX)$ as follows.

\begin{enumerate}
\item {\em Ordinary leaves.} To each ordinary leaf $l_j \in L^o(\Ga)$ with  $\beta(l_j)= \beta\in \{0,1\}$
and  $k(l_j)= k\in \bZ_{\geq 0}$, we assign:
$$(\cL^{\bu})^\beta_k(l_j) = [z^k] (\sum_{\alpha,\gamma=0,1} \frac{\bu^\alpha_j(z)}{\sqrt{\Delta^\alpha(t)}}
{S}^{\hugamma}_{\,\ \hualpha}(z) R(-z)_\gamma^{\,\ \beta} ).
$$
\item {\em Dilaton leaves.} To each dilaton leaf $l \in L^1(\Ga)$ with $\beta(l)=\beta \in \{0,1\}$
and $2\leq k(l)=k \in \bZ_{\geq 0}$, we assign
$$
(\cL^1)^\beta_k(l) = [z^{k-1}](-\sum_{\alpha=0,1}\frac{1}{\sqrt{\Delta^\alpha(t)}} R_\alpha^{\,\ \beta}(-z)).
$$

\item {\em Edges.} To an edge connected a vertex marked by $\alpha\in \{0,1\}$ to a vertex
marked by $\beta\in \{0,1\}$ and with heights $k$ and $l$ at the corresponding half-edges, we assign
$$
\cE^{\alpha,\beta}_{k,l}(e) = [z^k w^l]
\Bigl(\frac{1}{z+w} (\delta_{\alpha,\beta}-\sum_{\gamma=0,1}
R_\gamma^{\,\ \alpha}(-z) R_\gamma^{\,\ \beta}(-w)\Bigr).
$$
\item {\em Vertices.} To a vertex $v$ with genus $g(v)=g\in \bZ_{\geq 0}$ and with
marking $\beta(v)=\beta$, with $n$ ordinary
leaves and half-edges attached to it with heights $k_1, ..., k_n \in \bZ_{\geq 0}$ and $m$ more
dilaton leaves with heights $k_{n+1}, \ldots, k_{n+m}\in \bZ_{\geq 0}$, we assign
$$
\big(\sqrt{\Delta^\beta(t)}\big)^{2g-2+n+m} \int_{\Mbar_{g,n+m}}\psi_1^{k_1} \cdots \psi_{n+m}^{k_{n+m}}.
$$
\end{enumerate}
Define the weight
\begin{eqnarray*}
w_A^{\bu}(\vGa) &=& \prod_{v\in V(\Ga)} (\sqrt{\Delta^{\beta(v)}(t)})^{2g(v)-2+\val(v)} \langle \prod_{h\in H(v)} \tau_{k(h)}\rangle_{g(v)}
\prod_{e\in E(\Ga)} \cE^{\beta(v_1(e)),\beta(v_2(e))}_{k(h_1(e)),k(h_2(e))}(e)\\
&& \cdot \prod_{j=1}^{n}(\cL^{\bu})^{\beta(l_j)}_{k(l_j)}(l_j)
\prod_{l\in L^1(\Ga)}(\cL^1)^{\beta(l)}_{k(l)}(l).
\end{eqnarray*}

With the above definition of the weight of a labeled graph, we have
the following theorem which expresses the $\T-$equivariant descendant
Gromov-Witten potential of $\cX$ in terms of graph sum.

\begin{theorem}[{Givental \cite{Gi01a}}]
\label{thm:Givental}
 Suppose that $2g-2+n>0$. Then
\begin{equation}
\label{eqn:A-graph-sum}
\llangle \bu_1,\ldots, \bu_n\rrangle_{g,n}^{\cX,\T}=\sum_{\vGa\in \bGa_{g,n}(\cX)}\frac{w_A^{\bu}(\vGa)}{|\Aut(\vGa)|}.
\end{equation}
\end{theorem}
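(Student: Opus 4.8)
The plan is to recognize Theorem \ref{thm:Givental} as the graph-sum form of Givental's quantization formula for the total descendant potential of a semisimple Frobenius manifold, applied here to $QH^*_\T(\cX)$, whose semisimplicity was established in Section \ref{sec:QH}. The essential analytic input is already in hand: Theorem \ref{R-matrix} produces the unique $R$-matrix for which $\tS=\Psi R(z)e^{U/z}$ solves the $\T$-equivariant QDE \eqref{eqn:qde}, satisfies the unitarity $R^T(-z)R(z)=\one$, and has the prescribed asymptotics \eqref{eqn:R-at-zero}. By Givental's quantization conjecture, established for semisimple theories by Teleman, this same $R(z)$ governs the reconstruction of the full Gromov--Witten cohomological field theory from the trivial (topological) theory attached to the canonical coordinates; the uniqueness clause of Theorem \ref{R-matrix} is exactly what identifies the reconstruction datum with our $R$.

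First I would express the \emph{ancestor} potential as the Givental action $\hat R$ applied to a product of point descendant potentials, one per canonical coordinate $u^\alpha$, each a Witten--Kontsevich tau-function contributing $\int_{\Mbar_{g,n+m}}\psi_1^{k_1}\cdots\psi_{n+m}^{k_{n+m}}$ at genus $g(v)$. The quantum normalized canonical frame $\{\hat\phi_\alpha(t)\}$ supplies the factor $(\sqrt{\Delta^{\beta(v)}(t)})^{2g(v)-2+\val(v)}$ at each vertex. Expanding the quantized operator $\hat R$ as a Gaussian/Feynman integral yields the graph sum: the quadratic (propagator) part produces the edge weight $\cE^{\alpha,\beta}_{k,l}$, built from $\frac{1}{z+w}(\delta_{\alpha,\beta}-\sum_\gamma R_\gamma^{\,\ \alpha}(-z)R_\gamma^{\,\ \beta}(-w))$, while the linear part dresses each ancestor leaf by $R(-z)$, yielding in particular the dilaton-leaf weight $(\cL^1)^\beta_k$.

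Next I would pass from ancestors to descendants. The double correlators $\llangle \bu_1,\ldots,\bu_n\rrangle^{\cX,\T}_{g,n}$ differ from the ancestor correlators precisely by the insertion of the fundamental solution $\cS$ at each ordinary marked point, via the standard ancestor--descendant relation (Section \ref{sec:A-S}; see \cite{Gi01a}). Inserting the matrix $S^{\hugamma}_{\,\ \hualpha}(z)$ of the $\cS$-operator at each ordinary leaf and composing with the $R(-z)$-dressing converts the bare input $\bu^\alpha_j$ into the stated ordinary-leaf weight
\[
(\cL^{\bu})^\beta_k(l_j) = [z^k]\Big(\sum_{\alpha,\gamma=0,1}\frac{\bu^\alpha_j(z)}{\sqrt{\Delta^\alpha(t)}}\, S^{\hugamma}_{\,\ \hualpha}(z)\, R(-z)_\gamma^{\,\ \beta}\Big),
\]
the factor $1/\sqrt{\Delta^\alpha(t)}$ arising from rewriting everything in the normalized canonical basis. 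Assembling vertices, edges, and leaves, and dividing by $|\Aut(\vGa)|$ to account for the symmetries of each Feynman graph, then reproduces \eqref{eqn:A-graph-sum}.

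The main obstacle I expect is not any single conceptual step but the bookkeeping that makes the three normalizations consistent: the dilaton shift implicit in the quantization, the powers of $\sqrt{\Delta^\alpha(t)}$ distributed among vertices and leaves, and the distinction between $R(-z)$ and $R(z)$ forced by the unitarity relation. In particular one must verify that the ancestor-to-descendant dressing produces exactly $S^{\hugamma}_{\,\ \hualpha}(z)\,R(-z)_\gamma^{\,\ \beta}$ with the correct index contractions, and that the hypothesis $2g-2+n>0$ guarantees every contributing graph $\vGa\in\bGa_{g,n}(\cX)$ is stable, so that all the Witten--Kontsevich vertex factors are well defined.
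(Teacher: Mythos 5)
The paper does not prove this statement: it is stated as a quoted result of Givental \cite{Gi01a}, with no internal argument, so there is nothing in the text to compare your proof against line by line. Your outline is the standard derivation found in the literature --- Givental's quantization formula expressing the ancestor potential of a semisimple theory as $\hat R$ acting on a product of Witten--Kontsevich tau-functions, Feynman-expanded so that the propagator yields the edge weight $\cE^{\alpha,\beta}_{k,l}$ and the dilaton shift yields the dilaton leaves, followed by the ancestor-to-descendant conversion via the $\cS$-operator which dresses each ordinary leaf by $S^{\hugamma}_{\,\ \hualpha}(z)R(-z)_\gamma^{\,\ \beta}$ --- and it is correct in outline, with the stability hypothesis $2g-2+n>0$ playing exactly the role you assign it. One remark on attribution: for the torus-equivariant theory of the noncompact $\cX$ at hand, the formula is proved in \cite{Gi01a} directly by fixed-point localization (isolated fixed points), which is the route the paper's citation actually points to; invoking Teleman's classification is a valid alternative for semisimple CohFTs but requires a word about setting up the CohFT over the extended coefficient ring $\bStt$, whereas the localization proof covers this equivariant, noncompact situation without further comment. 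The remaining work in your sketch is, as you say, bookkeeping of the dilaton shift, the powers of $\sqrt{\Delta^\alpha(t)}$, and the $R(z)$ versus $R(-z)$ conventions; these are standard but are precisely where sign and normalization errors creep in, so a complete write-up would need to fix conventions consistently with Theorem \ref{R-matrix}.
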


\begin{remark}\label{descendant Q}In the above graph sum formula, we know that the restriction $S^{\widehat{\underline{\gamma}} }_{\
  \widehat{\underline{\alpha}}}(z)|_{\fQ=1}$ is well-defined by Remark
\ref{Novikov}. Meanwhile by (1) in Theorem \ref{R-matrix}, we know
that the restriction $R(z)|_{\fQ=1}$ is also well-defined. Therefore by Theorem \ref{thm:Givental}, $\llangle \bu_1,\ldots, \bu_n\rrangle_{g,n}^{\cX,\T}|_{\fQ=1}$ is well-defined.
\end{remark}

\subsection{Open-closed Gromov-Witten invariants}\label{sec:open-closed-GW}

Let $\cX=\cO_{\bP^1}(-1)\oplus \cO_{\bP^1}(-1)$ and $L_{P,Q}$ as defined in Section \ref{brane}. Given any partition of $\vec{\mu}$ with $l(\vec{\mu})=h$ and $g,d\in \bZ_{\geq 0}$, we define (cf. \cite[Section 4]{KaLiu})
\[
\overline{\cM}_{g,n,d,\vec{\mu}}:=\overline{\cM}_{g,n;h}(\cX,L_{P,Q}|d;\mu_1,\dots,\mu_{h}).
\]
The right hand side is the moduli space of stable maps
$u:(\Si,x_1,\cdots,x_n,\partial \Si)\to (\cX,L_{P,Q})$, where $\Si$ is a prestable
bordered Riemann surface of genus $g$ and $h$ boundary components
($\partial \Si=\sqcup_{j=1}^h R_j$ for each $R_j\cong S^1$) and $x_1,\cdots,x_n$ are interior marked points on $\Si$. We
require that $u_*[\Si]=d+(\sum_{i=1}^h \mu_i)\gamma_0$ and
$u_*[R_j]=\mu_j\gamma_0$. Here $\gamma_0$ is the generator of $H_1(\L;\bZ)$.

As shown in \cite{DSV}, the $(\T)_\bR$-action preserves the pair $(\cX,\L)$, and induces an action on $\overline{\cM}_{g,n,d,\vec{\mu}}$. The fixed locus consists of the map
\[
f: D_1\cup \dots D_h \cup C \to (\cX,\L),
\]
where each $D_i$ is a disk $\{t: |t|\leq 1\}$ and $C$ is a (possibly empty) nodal curve. The point $t=0$ on each $D_i$ is the only possible nodal point on $D_i$. The map $f|_{D_i}(t)=(t^{\mu_i Q},t^{\mu_i P},0)$ and $f|_C$ is a $\T$-invariant morphism. Here we use the local affine coordinates $(x,y,s/t)$ for $X$.

The $(\T)_\bR\cong S^1$-fixed part $\overline{\cM}_{g,n,d,\vec{\mu}}^{(\T)_\bR}$ is compact, and we define
\[
\langle \gamma_1,\dots,\gamma_n\rangle^{\cX,L_{P,Q},\T}_{g,n,d,\vec{\mu}}
=\int_{[\Mbar_{g,n,d,\vec{\mu}}^{(\T)_\bR}]^\vir}\frac{\prod_{j=1}^n \ev_j^*\gamma_j}{e_{(\T)_\bR}(N^\vir)}
\]
where $N^\vir$ is the virtual normal bundle of $\Mbar_{g,n,d,\vec{\mu}}^{(\T)_\bR}\subset \Mbar_{g,n,d,\vec{\mu}}$.

We define the disk factor for any $\mu\in \bZ_{>0}$ as
\begin{equation}
\label{eqn:disk-factor}
D(\mu)=(-1)^{(P+Q)\mu-1}\frac{\prod_{m=1}^{Q\mu-1}(P\mu+m)}{(Q\mu)!}.
\end{equation}
The localization computation in \cite{DSV} gives the following formula.
\begin{proposition}
\label{prop:localization}
\begin{equation}
\label{eqn:localization}
\langle \gamma_1,\ldots, \gamma_n\rangle_{g,n,d,\vmu}^{\cX, \L}
=\prod_{j=1}^h D(\mu_j) \quad \cdot
\int_{[\Mbar_{g,n+h}(\cX,d)^\T]^{\vir}} \frac{\prod_{i=1}^n\ev_i^*\gamma_i \prod_{j=1}^h\ev_{n+j}^*\phi^{0}}
{\prod_{j=1}^h(\frac{\sv}{\mu_j}
  -{\psi}_{n+j})}.
\end{equation}
\end{proposition}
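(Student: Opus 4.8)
The plan is to compute the defining $(\T)_\bR$-localization integral over $\Mbar_{g,n,d,\vmu}^{(\T)_\bR}$ directly and show that it reorganizes into the claimed product of disk factors and closed descendant invariant. First I would use the structure of the fixed locus stated above: every $(\T)_\bR$-fixed stable map decomposes its domain as $D_1\cup\cdots\cup D_h\cup C$, with each disk $D_i$ carrying the rigid winding-$\mu_i$ cover $f|_{D_i}(t)=(t^{\mu_i Q},t^{\mu_i P},0)$ and attached to the interior $\T$-invariant curve $C$ at a single node lying over the fixed point $\fp_0=[0:1]$ (since $f|_{D_i}(t)\to\fp_0$ as $t\to 0$). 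Because the disk covers are rigid, all moduli sit on $C$, so capping each disk by a marked point at its node identifies $\Mbar_{g,n,d,\vmu}^{(\T)_\bR}$ with the sublocus of $\Mbar_{g,n+h}(\cX,d)^\T$ on which the last $h$ marked points map to $\fp_0$. Under this identification the interior perfect obstruction theories agree, so the virtual fundamental classes match.

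Next I would factorize the virtual normal bundle along this decomposition into three pieces: (a) deformations and obstructions internal to each disk $D_i$ with its Lagrangian boundary condition; (b) the smoothing of each node; and (c) deformations and obstructions internal to $C$, which reproduce exactly the virtual normal bundle of the closed fixed locus $\Mbar_{g,n+h}(\cX,d)^\T$ and are thus already present in the target integral. For piece (b), the node-smoothing line is the tensor product of the tangent line to $D_i$ at the node (of equivariant weight $\sv/\mu_i$, read off from the winding-$\mu_i$ cover) and the tangent line to $C$ at the $(n+i)$-th marked point (contributing $\psi_{n+i}$); its equivariant Euler class is $\frac{\sv}{\mu_i}-\psi_{n+i}$, producing the denominator $\prod_{j=1}^h(\frac{\sv}{\mu_j}-\psi_{n+j})^{-1}$. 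Piece (a) is a number depending only on $\mu_i,P,Q$, namely the ratio of equivariant Euler classes of the obstruction and deformation spaces of the winding-$\mu_i$ disk with values in $\cO(-1)\oplus\cO(-1)$ under the Lagrangian boundary condition; I would show by an explicit weight count that it equals the disk factor $D(\mu_i)$ of \eqref{eqn:disk-factor}.

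Finally I would reassemble the pieces. The insertions $\ev_{n+j}^*\phi^0=\ev_{n+j}^*[\fp_0]$ enforce exactly the restriction that the capping marked points land on $\fp_0$, so integrating over $\Mbar_{g,n+h}(\cX,d)^\T$ with these insertions reproduces the sum over the components of $\Mbar_{g,n,d,\vmu}^{(\T)_\bR}$. Collecting the disk factors from (a), the node denominators from (b), and the original insertions $\prod_{i=1}^n\ev_i^*\gamma_i$ then yields precisely the right-hand side of \eqref{eqn:localization}.

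I expect the main obstacle to be the explicit evaluation in piece (a). One must set up the deformation-obstruction complex of a disk multiple cover into $\cX$ with the shifted boundary condition $\L$, track how the $\cO(-1)\oplus\cO(-1)$ normal directions split under the $(\T)_\bR$-action and the boundary condition (the relevant fiber weights at $\fp_0$ being $P\sv$ and $Q\sv$), and carry out the weight bookkeeping that produces the binomial-type product in $D(\mu)$, including its sign $(-1)^{(P+Q)\mu-1}$. This is where the framing and the Maslov-index-zero property of $\L$ enter, and where a sign together with the normalization of the square roots $\sqrt{\Delta^\alpha}$ must be reconciled with the conventions fixed earlier.
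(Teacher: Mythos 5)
The paper offers no proof of Proposition~\ref{prop:localization}; it is quoted from the localization computation of Diaconescu--Shende--Vafa \cite{DSV}. Your outline reproduces the correct structure of that computation: the fixed locus consists of configurations $D_1\cup\cdots\cup D_h\cup C$ with each disk rigidly covering the basic invariant disk through $\fp_0$ with winding $\mu_j$; capping the disks off by marked points identifies the fixed locus (up to the usual unstable edge cases, e.g.\ empty $C$) with the $\ev_{n+j}^{-1}(\fp_0)$ locus inside $\Mbar_{g,n+h}(\cX,d)^\T$, which is exactly what the insertion $\ev_{n+j}^*\phi^{0}=\ev_{n+j}^*[\fp_0]$ encodes after its value $\Delta^0$ is absorbed into the normal-bundle bookkeeping; and the node-smoothing line has Euler class $\frac{\sv}{\mu_j}-\psi_{n+j}$, since with the fiber weights $P\sv,Q\sv$ at $\fp_0$ the parametrization $t\mapsto(t^{\mu_j Q},t^{\mu_j P},0)$ forces the tangent line of $D_j$ at $t=0$ to carry weight $\sv/\mu_j$. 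All of this is consistent with the conventions fixed earlier in the paper.

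The genuine gap is your piece (a): the identification of the disk-local contribution with $D(\mu)$ of \eqref{eqn:disk-factor} is asserted rather than computed, and it is the entire nontrivial content of the formula --- the rest is the generic shape of a virtual localization with disk components. Carrying it out requires the deformation theory of a winding-$\mu$ multiple cover of the invariant disk with totally real boundary in $\L$, i.e.\ the $S^1$-weights of $H^0$ and $H^1$ of the pair $\bigl(f^*T\cX,(f|_{\partial D})^*T\L\bigr)$ on $(D,\partial D)$; the sign $(-1)^{(P+Q)\mu-1}$ and the product $\prod_{m=1}^{Q\mu-1}(P\mu+m)/(Q\mu)!$ come out of exactly this count and depend on the orientation and framing conventions that you flag but do not fix. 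So as written the proposal is a correct strategy with its central computation deferred. A useful cross-check internal to the paper: the closed-string relative localization of Section~\ref{relative1} produces the same number $\frac{(-1)^{(P+Q)\mu-1}((P+Q)\mu-1)!}{(P\mu)!\,(Q\mu)!}=D(\mu)$ from $\cO(-D_1-D_2)\to\bP(P,Q,1)$ with no real boundary conditions at all, which at least confirms the target your weight count must hit.
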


For $a\in \bZ$, We introduce
\begin{align*}
\Phi_a(X)&=\sum_{\mu>0} D(\mu) (\frac{\mu}{\sv})^{a}\mu X^\mu,\quad \Phi^a(X)=\Phi_a(X) e_{T_{P,Q}}(T_{p_0}X),\\
\txi_0(z,X)&=\sum_{a\geq -2} z^a \Phi_a(X),\quad \txi^0(z,X)=\sum_{a\geq -2} z^a \Phi^a(X),\quad \txi^1(z,X)=\txi_1(z,X)=0.
\end{align*}

Let $\btau=\tau_0 1+ \tau_1 \HT\in H^*_{T_{P,Q}}(\cX;\bC)$ be a degree $2$ class. Define the A-model open potential associated to $(\cX,\L)$ as the following.
\[
F_{g,n}^{\cX,\L}(\btau; X_1,\dots,X_n)=\sum_{d\geq 0} \sum_{\mu_1,\dots,\mu_n>0} \sum_{\ell\geq 0} \frac{\langle\btau^\ell \rangle^{\cX,\L}_{g,\ell,d,\vmu}}{\ell!} X_1^{\mu_1}\dots X_n^{\mu_n}.
\]
The localization computation expresses these potentials as descendant potentials \cite{FLZ16}. When $2g-2+n>0$,
\begin{align}
\label{eqn:free-energy-localization-stable}
F_{g,n}^{\cX,\L}(\btau; X_1,\dots,X_n)&=\sum_{a_1,\dots,a_n\in\bZ_\geq 0} \sum_{\ell\geq 0} \sum_{d\geq 0} \frac{\langle \btau^\ell, \tau_{a_1}(\phi^0),\dots, \tau_{a_n}(\phi^0)\rangle^{\cX,T_{P,Q}}_{g,\ell+n,d} }{\ell!} \prod_{j=1}^n \Phi_{a_j}(X_j)\\
&=[z_1^{-1}\dots z_n^{-1}] \llangle \frac{\phi_0}{z_1-\psi_1}\dots \frac{\phi_0}{z_n-\psi_n}\rrangle_{g,n}^{\cX,T_{P,Q}}\vert_{\fQ=1} \prod_{j=1}^n \txi^0(z_j,X_j).\nonumber
\end{align}
The two special cases are
\begin{align}
F_{0,1}^{\cX,\L}(\btau;X)&=[z^{-2}]S_z(1,\phi_0)\txi^0(z,X),\label{eqn:free-energy-localization-disk}\\
F_{0,2}^{\cX,\L}(\btau;X_1,X_2)&=[z_1^{-1}z_2^{-1}] V_{z_1,z_2}(\phi_0,\phi_0)\txi^0(z_1,X_1)\txi^0(z_2,X_2).\label{eqn:free-energy-localization-annulus}
\end{align}

\begin{remark}
As discussed in \cite[Remark 3.6]{FLZ16}, the restriction to $\fQ=1$ is well-defined.
\end{remark}

\begin{figure}
\vspace*{-1cm}
\definecolor{uuuuuu}{rgb}{0.26666666666666666,0.26666666666666666,0.26666666666666666}
\definecolor{ttqqqq}{rgb}{0.2,0.,0.}
\begin{tikzpicture}[line cap=round,line join=round,>=triangle 45,x=2.5cm,y=2.5cm]
\clip(-1.8301416483150117,-1.8874294848107016) rectangle (3.726718929896075,1.395033885226987);
\draw [line width=0.4pt] (0.,0.75)-- (0.,0.);
\draw [line width=0.4pt] (0.,0.)-- (1.,0.);
\draw [line width=0.4pt] (0.,0.)-- (-0.75,-0.75);
\draw [line width=0.4pt,color=uuuuuu] (1.,0.)-- (1.,-0.75);
\draw [line width=0.4pt] (1.,0.)-- (1.75,0.75);
\draw [shift={(-0.2735412203034273,-0.13121773832306596)},line width=0.4pt]  plot[domain=0.4472761678863026:1.577824695489095,variable=\t]({1.*0.30338571827907257*cos(\t r)+0.*0.30338571827907257*sin(\t r)},{0.*0.30338571827907257*cos(\t r)+1.*0.30338571827907257*sin(\t r)});
\draw [shift={(-0.23269130046039038,0.26750923406962407)},line width=0.4pt]  plot[domain=4.447645982957459:5.428291291479409,variable=\t]({1.*0.35455102823495604*cos(\t r)+0.*0.35455102823495604*sin(\t r)},{0.*0.35455102823495604*cos(\t r)+1.*0.35455102823495604*sin(\t r)});
\draw [rotate around={78.88126519061447:(-0.3002919119259589,0.047645041375420485)},line width=0.4pt] (-0.3002919119259589,0.047645041375420485) ellipse (0.31208738823402316cm and 0.2194515517918163cm);
\begin{scriptsize}
\draw [fill=ttqqqq] (1.,0.) circle (1.5pt);
\draw[color=ttqqqq] (1.1607588324034996,0.0338496178538174) node {$\fp_1$};
\draw [fill=uuuuuu] (0.,0.) circle (1.0pt);
\draw[color=uuuuuu] (0.09023795843783826,0.062053836480717564) node {$\fp_0$};
\end{scriptsize}
\end{tikzpicture}
\vspace*{-3.5cm}
\caption{A $\T$-invariant holomorphic disk: it contains $\fp_0$ at the origin, and its boundary maps onto the intersection of $\L$ and the fiber at $\fp_0$ in $X=\cO_{\bP^1}(-1)\oplus \cO_{\bP^1}(-1)$.}
\end{figure}
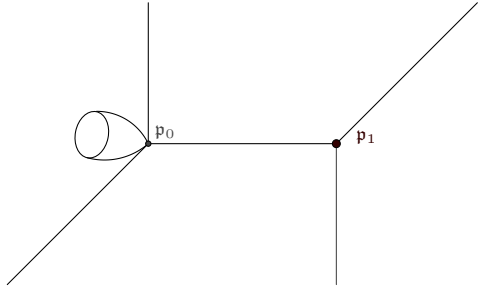

\begin{proposition}
\begin{equation}
F^{\cX,\L}_{g,n}(\btau; X_1,\dots,X_n)=\sum_{\vGa\in \bGa_{g,n}(\cX)} \frac{w^X_A(\vGa)}{\Aut(\vGa)}.\label{eqn:graph-open}
\end{equation}
Here $X=(X_1,\dots,X_n)$ and $w^X_A(\vGa)$ is obtain from $w^\bu_A(\vGa)$ by replacing the ordinary leaf by the \emph{open leaf} $(\cL^X)^\si_k(l_j)=[z^k](\txi^0(z,X_j) S_z(\hat \phi_\rho(\btau),\phi_0))_+ R(-z)_\rho^{\ \si}$.
\end{proposition}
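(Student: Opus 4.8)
The plan is to pull the whole computation back to the \emph{closed} descendant theory of $\cX$ and then feed it into Givental's graph sum. Working in the stable range $2g-2+n>0$, the first expression in \eqref{eqn:free-energy-localization-stable} can be written as
\[
F^{\cX,\L}_{g,n}(\btau;X_1,\dots,X_n)=\sum_{a_1,\dots,a_n\geq 0}\llangle \phi_0\psi^{a_1},\dots,\phi_0\psi^{a_n}\rrangle^{\cX,\T}_{g,n}\big|_{\fQ=1}\ \prod_{j=1}^n \Phi^{a_j}(X_j),
\]
i.e.\ as a disk-weighted sum of closed descendant double correlators all of whose insertions are the idempotent $\phi_0$. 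To the correlator I apply Theorem \ref{thm:Givental} with the specialization $\bu_j(z)=\phi_0\,z^{a_j}$; the restriction to $\fQ=1$ is harmless by Remarks \ref{Novikov} and \ref{descendant Q}.

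Next I match the two weights graph by graph over $\vGa\in\bGa_{g,n}(\cX)$. In $w^{\bu}_A(\vGa)$ the edge factors $\cE^{\alpha,\beta}_{k,l}$, the vertex factors (the $\psi$-integrals over $\Mbar_{g,n+m}$ together with their $\sqrt{\Delta^{\beta(v)}(\btau)}$ prefactors) and the dilaton leaves $(\cL^1)^\beta_k$ do not see the external insertions $\bu_j$; they are literally the factors appearing in $w^X_A(\vGa)$. Hence the proposition reduces to the single statement that, after the sum over heights $a_j$ weighted by $\Phi^{a_j}(X_j)$, each ordinary-leaf factor of $w^{\bu}_A(\vGa)$ becomes the open leaf $(\cL^X)^\si_k(l_j)$.

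This leaf identity is the heart of the argument. Since $\{\hat\phi_\alpha(\btau)\}$ is orthonormal and $\bu_j(z)=\sum_\alpha \frac{\bu_j^\alpha(z)}{\sqrt{\Delta^\alpha(\btau)}}\hat\phi_\alpha(\btau)$, the $\bStt$-linearity of $\cS$ collapses the leaf core:
\[
\sum_{\alpha}\frac{\bu_j^\alpha(z)}{\sqrt{\Delta^\alpha(\btau)}}\,S^{\hugamma}_{\ \hualpha}(z)=\Big(\hat\phi_\gamma(\btau),\cS(\bu_j(z))\Big)=z^{a_j}\,S_z(\hat\phi_\gamma(\btau),\phi_0),
\]
which transports the classical idempotent $\phi_0$ into the quantum canonical frame at $t=\btau$. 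Consequently the closed leaf of height $k$ equals $[z^{k-a_j}]\sum_\gamma S_z(\hat\phi_\gamma(\btau),\phi_0)R(-z)_\gamma^{\ \si}$, and summing over $a_j\geq 0$ together with the elementary identity $\sum_{a\geq 0}[z^{k-a}]F(z)\,\Phi^{a}(X_j)=[z^{k}]\big(\txi^0(z,X_j)\big)_+F(z)$ produces exactly the open leaf $(\cL^X)^\si_k(l_j)$; the nonnegative-power truncation $(\,\cdot\,)_+$ emerges precisely because only heights $a\geq 0$ contribute to the descendant sum. Reassembling leaves, edges, vertices and dilaton leaves then yields $\sum_{\vGa}w^X_A(\vGa)/|\Aut(\vGa)|$.

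I expect the leaf identity of the previous paragraph to be the main obstacle. The delicate points are (i) the basis bookkeeping turning $\sum_\alpha\frac{\bu_j^\alpha(z)}{\sqrt{\Delta^\alpha(\btau)}}S^{\hugamma}_{\ \hualpha}(z)$ into the single matrix element $S_z(\hat\phi_\gamma(\btau),\phi_0)$, i.e.\ the passage from the classical normalized canonical frame to the quantum one at $\btau$; and (ii) keeping track of the two independent coefficient extractions — the descendant variable $z$ that records the height $k$, and the auxiliary disk variable whose $z_j^{-1}$-residue inserts $\txi^0$ and pins the truncation to its nonnegative part. One must also check that the $\fQ=1$ specialization commutes with these operations, which proceeds exactly as in \cite{FLZ16}.
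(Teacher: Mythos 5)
Your overall route is the same as the paper's: substitute the localization formula \eqref{eqn:free-energy-localization-stable} into Givental's graph sum \eqref{eqn:A-graph-sum} with leaf input $\phi_0 z^{a_j}$ weighted by $\Phi^{a_j}(X_j)$, observe that vertices, edges and dilaton leaves do not involve the insertions (after restricting to $t=\btau$), and convert the ordinary leaf via $\sum_\alpha \frac{\bu_j^\alpha(z)}{\sqrt{\Delta^\alpha(\btau)}}S^{\hugamma}_{\ \hualpha}(z)=z^{a_j}S_z(\hat\phi_\gamma(\btau),\phi_0)$. All of that is fine, and the basis bookkeeping in point (i) is handled correctly.

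The gap is in the final step, where you place the truncation. Your ``elementary identity'' produces
\[
[z^k]\,\bigl(\txi^0(z,X_j)\bigr)_+\,S_z(\hat\phi_\rho(\btau),\phi_0)\,R(-z)_\rho^{\ \si},
\]
and you assert this equals the open leaf, attributing the $(\cdot)_+$ in the statement to the restriction $a\geq 0$. But the open leaf of the proposition is
\[
[z^k]\,\bigl(\txi^0(z,X_j)\,S_z(\hat\phi_\rho(\btau),\phi_0)\bigr)_+\,R(-z)_\rho^{\ \si},
\]
with the truncation applied to the \emph{product}. These are not equal: $S_z(\hat\phi_\rho(\btau),\phi_0)$ is a power series in $1/z$ while $R(-z)$ is a power series in $z$, so for $k\geq 0$ the coefficient $[z^k]$ of your expression receives contributions from the strictly negative powers of $(\txi^0)_+S_z$ paired with positive powers of $R(-z)$; truncating the product removes exactly these cross terms. (The two versions do agree on the $a=-2,-1$ part of $\txi^0$, since $z^{-1}S_z$ and $z^{-2}S_z$ have no nonnegative part --- that is the only content of ``only $a\geq 0$ contributes,'' and it is not where the $(\cdot)_+$ comes from.) The correct source of the inner truncation is Givental's descendant-to-ancestor conversion: the ordinary leaf is built from the power-series part $[\cS(\bu_j(z))]_+$ of the $\cS$-action on the input, and only then composed with $R(-z)$. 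The displayed ordinary leaf in Theorem \ref{thm:Givental} suppresses this $(\cdot)_+$, but the computation establishing \eqref{eqn:graph-open} must apply it to $z^{a_i}S_z(\hat\phi_\rho(\btau),\phi_0)$ term by term before multiplying by $R(-z)$; otherwise the graph sum you assemble is a genuinely different quantity from the right-hand side of \eqref{eqn:graph-open}.
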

\begin{proof}
From the graph sum formula \eqref{eqn:A-graph-sum} and Equations \eqref{eqn:free-energy-localization-stable}\eqref{eqn:free-energy-localization-disk}\eqref{eqn:free-energy-localization-annulus} which express open amplitude in terms of descendants, we expand $F^{\cX,\L}_{g,n}$ in terms of a graph sum. Notice the only difference is that we need to restrict to $t=\btau$ and then replaces the ordinary leaf $(\cL^\bu)^\beta_k(l_j)$ by the open leaf
\begin{align*}
(\cL^X)^\si_k(l_j)=&\sum_{a_i\geq 0} [z^k] (z^{a_i} S_z(\hat \phi_\rho(\btau),\phi_0))_+R_\rho^{\ \si}(-z) \Phi^{a_i}(X_j)\\=&[z^k](\txi^0(z,X_j) S_z(\hat \phi_\rho(\btau),\phi_0))_+ R(-z)_\rho^{\ \si}.
\end{align*}
\end{proof}

\subsection{Open-closed Gromov-Witten invariants as relative Gromov-Witten invariants I: the degree zero case}\label{relative1}
In this section, we interpret the open-closed Gromov-Witten invariants in terms of the relative Gromov-Witten invariants.

Recall that the curve class $u_*[\Si]=d+(\sum_{i=1}^h \mu_i)\gamma_0$. When $d=0$, the open-closed Gromov-Witten invariant $\langle \rangle^{\cX,L_{P,Q},\T}_{g,0,0,\vec{\mu}}$ can be expressed as the following relative Gromov-Witten invariant.

We consider the weighted projective plane
\begin{equation}
\bP(P,Q,1):=(\bC^3\setminus\{0\})/\bC^*,
\end{equation}
where the $\bC^*$ acts on $(\bC^3\setminus\{0\})$ by
\begin{equation}
t(x_1,x_2,x_3)=(t^Px_1,t^Qx_2,tx_3).
\end{equation}
Let $D_i=\{x_i=0\}\subset \bP(P,Q,1),i=1,2,3$ be the $\bC^*$-equivariant divisors. Then the canonical divisor $K$ of $\bP(P,Q,1)$ is equal to $D_1+D_2+D_3$. Let $\fX=\bP(P,Q,1)$. Then the Picard group $\Pic(\fX)$ is isomorphic to $\bZ[D_3]$ and we have $[D_1]=P[D_3]$ and $[D_2]=Q[D_3]$.

\begin{figure}
\hspace*{-1.5cm}
\begin{tikzpicture}[line cap=round,line join=round,>=triangle
  45,x=0.8cm,y=0.8cm]
\clip(-6.233889702008898,-5.675682738606269) rectangle (12.060751429340735,5.131046805249403);
\draw (0.,3.)-- (0.,0.);
\draw (0.,0.)-- (5.,0.);
\draw (5.,0.)-- (0.,3.);
\draw (0.,0.)-- (-2.,-2.);
\draw [->] (0.,3.) -- (1.2041544999043605,2.2775073000573833);
\draw [->] (5.,0.) -- (3.7163550706549024,0.7701869576070585);
\draw [->] (0.,0.) -- (0.,1.);
\draw [->] (0.,0.) -- (1.,0.);
\draw [->] (0.,3.) -- (0.,2.);
\draw [->] (5.,0.) -- (3.86343731813683,0.);
\draw (5.,0.)-- (7.5,-1.5);
\draw (0.,3.)-- (-1.7751253661628148,4.025979259433816);
\draw [->] (0.,3.) -- (-1.1091808009171789,3.6410795081832195);
\draw [->] (5.,0.) -- (6.211441696264751,-0.7268650177588509);
\draw [->] (0.,0.) -- (-1.1176452774519008,-1.1176452774519008);
\begin{footnotesize}
\draw (-3.6270751461281234,3.115070608616428) node[anchor=north west] {$P_2=(0,1,0)\cong \mathrm{pt}/\mathbb Z_Q$};
\draw (0.00003803435391537338,-0.08032023366576904) node[anchor=north west] {$P_3=(0,0,1)\cong \mathrm{pt}$};
\draw (4.997977679231994,0.673956574598269) node[anchor=north west]
{$P_1=(1,0,0)=\mathrm{pt}/\mathbb Z_P$};
\draw (3.036857977745489,2.6625045236580056) node[anchor=north west] {$D_3$};
\draw (-1.4487915369020774,1.1402367833433107) node[anchor=north west] {$D_1$};
\draw (1.8163009607364056,-1.5065891074741318) node[anchor=north west] {$D_2$};
\end{footnotesize}
\begin{scriptsize}
\draw [fill=black] (0.,3.) circle (1.5pt);
\draw [fill=black] (0.,0.) circle (1.5pt);
\draw [fill=black] (5.,0.) circle (1.5pt);
\draw[color=black] (1.2391710578068922,2.8505011231770015) node {$ u_1-\frac{P}{Q} u_2$};
\draw[color=black] (4.504269222913712,0.9779577080919368) node {$ u_2-\frac{Q}{P} u_1$};
\draw[color=black] (0.23603095343437064,0.564243584305318) node {$ u_2$};
\draw[color=black] (0.9660252859660328,0.17167749934689519) node {$ u_1$};
\draw[color=black] (0.3077473442083247,2.456792666858722) node {$\frac{- u_2}{Q}$};
\draw[color=black] (3.806559633101764,-0.338935614617825) node {$-\frac{ u_1}{P}$};
\draw[color=black] (-0.4579531776542831,3.896775664453704) node
{$\frac{P}{Q} u_2- u_1$};
\draw[color=black] (5.127116650110848,-0.5466004424108106) node {$\frac{Q}{P} u_1 - u_2$};
\draw[color=black] (-1.5848017383450856,-0.7466004424108106) node {$- u_1 - u_2$};
\end{scriptsize}
\end{tikzpicture}
\vspace*{-3.5cm}
\caption{The toric diagram of $\cO(-D_1-D_2)\to \bP(P,Q,1)$, with orbifold points $P_2,P_3$ and weights of the torus action labeled}
\end{figure}
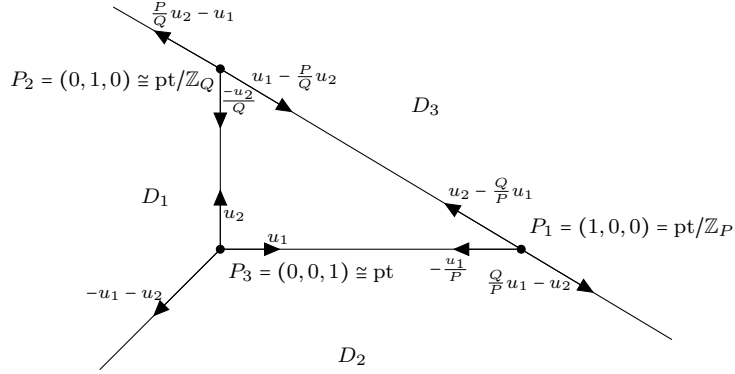

Consider the moduli space $\Mbar_g(\fX,D_3,d',\mu)$ of stable maps relative to the divisor $D_3$. Here $d'>0$ is an integer and $\mu=(\mu_1,\cdots,\mu_n)$ is a partition of $d'$. The degree of the stable maps is equal to $d'PQ[D_3]$. It is easy to see that the virtual dimension of $\Mbar_g(\fX,D_3,d',\mu)$ is equal to $(P+Q)d'+n+g-1$. Let
$$\pi:\cU_{g,\mu}\to \Mbar_g(\fX,D_3,d',\mu)$$
be the universal curve and let
$$\cT\to \Mbar_g(\fX,D_3,d',\mu)$$
be the universal target. Then there is a universal map
$$F:\cU_{g,\mu}\to\cT$$
and a contraction map
$$\tilde{\pi}:\cT\to\fX.$$
Define $\tF:=\tilde{\pi}\circ F$. The we define the obstruction bundle $V_{g,\mu}$ to be
$$V_{g,\mu}:=R^1\pi_*\tF^*\cO(-D_1-D_2).$$
By Riemann-Roch theorem, the rank of the obstruction bundle $V_{g,\mu}$ is equal to $(P+Q)d'+g-1$. Define
\begin{equation}\label{eqn:relative}
N_{g,\mu}:=\int_{[\Mbar_g(\fX,D_3,d',\mu)]^\vir}e(V_{g,\mu})\prod_{i=1}^n
\ev_i^*([\pt]),
\end{equation}
where $[\pt]$ is the point class on $D_3$. Then $N_{g,\mu}$ is a topological invariant.

We compute $N_{g,\mu}$ by standard localization computation. Consider the embedded 2-torus $T\cong(\bC^*)^2\subset \fX$. The $T-$action on itself extends to a $T-$action on $\fX$. Moreover, the $T-$action on $\fX$ lifts to a $T-$action on the line bundle $\cO(-D_1-D_2)$. Let $H^*_T(\pt;\bC)=\bC[u_1,u_2]$ and let $p_1=[1,0,0],p_2=[0,1,0],p_3=[0,0,1]$. Then the weights of the $T-$action at $p_1,p_2,p_3$ are given by
$$
\begin{array}{ccc}
      &        T_{\fX}            & \cO(-D_1-D_2) \\
  p_1 &   -\frac{u_1}{P},u_2-\frac{Qu_1}{P}        &  -u_2+\frac{Qu_1}{P}  \\
  p_2 & u_1-\frac{Pu_2}{Q},-\frac{u_2}{Q}         &   -u_1+\frac{Pu_2}{Q}        \\
  p_3 &  u_1, u_2      &   -u_1-u_2
\end{array}
$$

Let $u_1=Pu$ and $u_2=Qu$, where $u$ is the equivariant parameter of the corresponding sub-torus $T'\cong\bC^*\subset T$. Then the weights of the $T'-$action at $p_1,p_2,p_3$ are given by
$$
\begin{array}{ccc}
      &        T_{\fX}            & \cO(-D_1-D_2) \\
  p_1 &   -u,0        &  0  \\
  p_2 & 0,-u         &   0        \\
  p_3 &  Pu, Qu      &   -Pu-Qu
\end{array}
$$

Let $D=\{z\in\bC\mid|z|\leq1\}$ be the disk. Consider the map
\begin{eqnarray}
g:D&\to&\fX\\
t&\mapsto& [t^P,t^Q,1].
\end{eqnarray}
Then $g$ extends to a map
\begin{eqnarray}
\tilde{g}:\bP^1&\to&\fX\\
\empty[x,y] &\mapsto& [x^P,x^Q,y].
\end{eqnarray}
Consider the point $p_4=[1,1,0]\in D_3$, which is equal to $\tilde{g}([1,0])$. Recall that in \eqref{eqn:relative}), we pull back the point class on $D_3$ by using the evaluation map of the boundary marked points. We now put this point at $p_4$.

The $T'$-action on $\fX$ induces an $T'$-action on $\Mbar_g(\fX,D_3,d',\mu)$ and the $T'$-action on $\cO(-D_1-D_2)$ induces an $T'$-action on $V_{g,\mu}$. A general point $[f:(C,x_1,\cdots,x_n)\to \fX[m]]\in\Mbar_g(\fX,D_3,d',\mu)^{T'}\cap\ev^{-1}_1(p_4)\cap \cdots\cap\ev^{-1}_n(p_4)$ can be described as follows. The target $\fX[m]$ is the union of $\fX$ with $m$ copies of the surface
\begin{equation}
\Delta(D_3):=\bP(N_{D_3/\fX}\oplus\cO_{D_3}),
\end{equation}
where $N_{D_3/\fX}$ is the normal bundle of $D_3$ in $\fX$. The map
\begin{equation}
\Delta(D_3)\to D_3
\end{equation}
is a projective line bundle. There are two distinct sections
\begin{equation}
D_3^0:=\bP(N_{D_3/\fX}\oplus0),D_3^\infty:=
\bP(0\oplus\cO_{D_3}).
\end{equation}
The first copy of $\Delta(D_3)$ is glued to $\fX$ along $D_3^0$ and  $D_3$ and the $i-th$ copy of $\Delta(D_3)$ is glued to the $(i+1)-th$ copy of $\Delta(D_3)$ along $D_3^\infty$ and $D_3^0$. The $\fX$ component in $\fX[m]$ is called the root component and the union of the $m$ copies of $\Delta(D_3)$ is called the bubble component. The domain curve $C$ is decomposed as
\begin{equation}
C=C_0\cup C_1\cup\cdots\cup C_{l(\nu)}\cup C_\infty.
\end{equation}
Here $C_0$ is a possibly disconnected curve contracted to $p_3$, $\nu=(\nu_1,\cdots,\nu_{l(\nu)})$ is a partition of $d'$, for $i=1,\cdots,l(\nu)$, $C_i\cong\bP^1$ and $f\mid_{C_i}$ is a degree $\nu_i$ map to a line joining $p_3$ and a point in $D_3$ and the map is given by $z\mapsto z^{\nu_i}$, $C_\infty$ is a possibly disconnected curve mapping to the bubble component with ramification profile $\mu$ and $\nu$ over $D_3^\infty$ in the $m-$th copy of $\Delta(D_3)$ and over $D_3^0$ in the first copy of $\Delta(D_3)$ respectively, and $\tilde{f}:=\tilde{\pi}\circ f(x_i)=p_4$.

Since the $T'$-action along $D_3$ is trivial and the $T'$-action on $\cO(-D_1-D_2)\mid_{D_3}$ is also trivial, the contribution from the locus for $m>0$ vanishes. So we only need to consider the case when there is no bubble component. In this case, $C_0$ is a genus $g$ curve and $\nu=\mu$ and $C_i$ is mapped to the line joining $p_3$ and $p_4$, $i=1,\cdots,l(\nu)$. In this case, the tangent obstruction complex implies the following long exact sequence:
\begin{eqnarray*}
0&\to& \Aut(C,x_1,\cdots,x_n)\to H^0(C,f^*T\fX(-D_3))\to\cT^1\\
&\to &\Def(C,x_1,\cdots,x_n)\to H^1(C,f^*T\fX(-D_3))\to\cT^2\to0.
\end{eqnarray*}
So we have
\begin{eqnarray*}
\frac{1}{e_{T'}(N^\vir)}&=& \frac{e_{T'}(\cT^2)}{e_{T'}(\cT^1)}.
\end{eqnarray*}
Therefore
\begin{eqnarray*}
\frac{e_{T'}(V_{g,\mu})}{e_{T'}(N^\vir)}&=& \frac{e_{T'}(\cT^2)e_{T'}(V_{g,\mu})}{e_{T'}(\cT^1)}\\
&=&\Lambda^\vee_g(Pu)\Lambda^\vee_g(Qu)\Lambda^\vee_g(-(P+Q)u)\prod_{i=1}^{l(\mu)}
\mu_i\frac{(-1)^{(P+Q)\mu_i-1}((P+Q)\mu_i-1)!}
{(\frac{u}{\mu_i}-\psi_i)(P\mu_i)!(Q\mu_i)!}.
\end{eqnarray*}
Therefore, we have
\begin{eqnarray*}
N_{g,\mu}&=&\frac{1}{\prod_{i=1}^{n}\mu_i}\int_{\Mbar_{g,n}}
\frac{\Lambda^\vee_g(Pu)\Lambda^\vee_g(Qu)\Lambda^\vee_g(-(P+Q)u)}
{\frac{u}{\mu_i}-\psi_i}\prod_{i=1}^{l(\mu)}
\mu_i\frac{(-1)^{(P+Q)\mu_i-1}((P+Q)\mu_i-1)!}
{(P\mu_i)!(Q\mu_i)!}\\
&=&\int_{\Mbar_{g,n}}
\frac{\Lambda^\vee_g(Pu)\Lambda^\vee_g(Qu)\Lambda^\vee_g(-(P+Q)u)}
{\frac{u}{\mu_i}-\psi_i}\prod_{i=1}^{l(\mu)}
\frac{(-1)^{(P+Q)\mu_i-1}((P+Q)\mu_i-1)!}
{(P\mu_i)!(Q\mu_i)!}.\\
\end{eqnarray*}
We should notice that the factor $\frac{(-1)^{(P+Q)\mu_i-1}((P+Q)\mu_i-1)!}
{(P\mu_i)!(Q\mu_i)!}$ coincides with the disk factor $D(\mu_i)$ defined in the last Section. Therefore, we have proved the following theorem:

\begin{theorem}\label{thm:relative}
$$\langle \rangle^{\cX,L_{P,Q},\T}_{g,0,0,\vec{\mu}}=N_{g,\mu}.$$
\end{theorem}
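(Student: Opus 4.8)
The plan is to compute the left-hand side by localization and verify that it collapses to precisely the Hodge integral over $\Mbar_{g,n}$ (with $n=l(\vmu)$) that was just produced for $N_{g,\mu}$; the identity then follows by comparing the two. Since the relative side is already reduced, all the work is in carrying $\langle\,\rangle^{\cX,\L,\T}_{g,0,0,\vmu}$ to the same expression.

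First I would apply Proposition \ref{prop:localization} with no interior insertions and $d=0$, which gives
\[
\langle\,\rangle^{\cX,\L,\T}_{g,0,0,\vmu}=\prod_{j=1}^{n} D(\mu_j)\cdot\Big\langle \frac{\phi^0}{\sv/\mu_1-\psi_1},\dots,\frac{\phi^0}{\sv/\mu_n-\psi_n}\Big\rangle^{\cX,\T}_{g,n,0},
\]
a product of disk factors times a genuine degree-zero $\T$-equivariant descendant invariant of $\cX$ in which every marked point carries the insertion $\phi^0=[\fp_0]$. I would then evaluate this closed invariant by Atiyah--Bott localization. For $d=0$ the moduli space is $\Mbar_{g,n}\times\cX$, and because each insertion is $[\fp_0]$ only the fixed component $\Mbar_{g,n}\times\{\fp_0\}$ contributes, the restriction of $[\fp_0]$ to $\fp_1$ being zero. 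On that component the inverse equivariant Euler class of the virtual normal bundle of constant maps is $\prod_{k=1}^{3}\Lambda^\vee_g(\sw_k)/(\sw_1\sw_2\sw_3)$, where $\sw_1,\sw_2,\sw_3=-(P+Q)\sv,\,P\sv,\,Q\sv$ are the weights of $T_{\fp_0}\cX$ and $\sw_1\sw_2\sw_3=\Delta^0$, while each $\phi^0$ restricts to $\Delta^0$. Assembling these contributions expresses the left side as a Hodge integral of the form $\int_{\Mbar_{g,n}}\Lambda^\vee_g(-(P+Q)\sv)\Lambda^\vee_g(P\sv)\Lambda^\vee_g(Q\sv)\prod_{j}\frac{1}{\sv/\mu_j-\psi_j}$, times the disk factors $\prod_j D(\mu_j)$ and a scalar power of $\Delta^0$.

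It remains to match this against the relative computation carried out above. After the identification $\sv=u$, the three weights $\{-(P+Q)\sv,P\sv,Q\sv\}$ of $T_{\fp_0}\cX$ coincide with the set $\{Pu,Qu,-(P+Q)u\}$ that appears on the relative side as the two tangent weights of $\fX$ at $p_3$ together with the weight of the obstruction line $\cO(-D_1-D_2)$; hence the triples of $\Lambda^\vee_g$-factors are identical. The node factors $1/(\sv/\mu_j-\psi_j)$ match the $1/(u/\mu_i-\psi_i)$ on the relative side, and the disk factor $D(\mu_j)$ equals the combinatorial factor $\frac{(-1)^{(P+Q)\mu_j-1}((P+Q)\mu_j-1)!}{(P\mu_j)!(Q\mu_j)!}$, as already observed in the text preceding the theorem. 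Granting the scalar bookkeeping below, the two Hodge integrals are then literally the same, which yields $\langle\,\rangle^{\cX,\L,\T}_{g,0,0,\vmu}=N_{g,\mu}$.

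The hard part is not this structural matching but the precise cancellation of the scalar equivariant prefactors. On the open side one must track the powers of $\Delta^0=e_\T(T_{\fp_0}\cX)$ coming both from the idempotent relation $[\fp_0]^n=(\Delta^0)^{n-1}[\fp_0]$ among the $\phi^0$-insertions and from the $1/e_\T(N^\vir)$ of the constant-map locus, as well as the powers of $\sv$ carried by the descendant normalization $\frac{\phi^0}{\sv/\mu_j-\psi_j}$. On the relative side one must account for the automorphism factor $1/\prod_i\mu_i$ of the degree-$\mu_i$ covers $z\mapsto z^{\mu_i}$ and for the contribution of the point class $[\pt]$ along the $T'$-fixed divisor $D_3$. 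Checking that these two sets of scalar factors cancel so as to leave the identical Hodge integrals — equivalently, that the two localizations agree as elements in the equivariant parameter — is the only content beyond the identifications above, and for the unstable ranges $(g,n)=(0,1),(0,2)$ one replaces the descendant formula by \eqref{eqn:free-energy-localization-disk} and \eqref{eqn:free-energy-localization-annulus} but argues in the same way.
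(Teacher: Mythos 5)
Your proposal is correct and follows essentially the same route as the paper: the paper's proof consists of the $T'$-localization of $N_{g,\mu}$ to the Hodge integral $\int_{\Mbar_{g,n}}\Lambda^\vee_g(Pu)\Lambda^\vee_g(Qu)\Lambda^\vee_g(-(P+Q)u)\prod_i(u/\mu_i-\psi_i)^{-1}\prod_i D(\mu_i)$ together with the observation that the combinatorial factor is exactly the disk factor, the open side being taken for granted from Proposition \ref{prop:localization} (the DSV localization), which you simply spell out explicitly via the constant-map contribution at $\fp_0$. The scalar/equivariant-normalization bookkeeping you defer is likewise left implicit in the paper, so nothing in your argument departs from or falls short of the paper's own proof.
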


In particular, when $g=0$ and $\mu=(d')$, we recover the disk factor $D(d')$ by the relative Gromov-Witten invariant $N_{0,(d')}$.

\begin{remark}\label{T}
One can also use the 2-dimensional torus $T\cong(\bC^*)^2$ to do the localization computation to calculate the relative Gromov-Witten invariant $N_{g,\mu}$. In this situation, one can put the point class insertion $[\pt]$ in the definition of $N_{g,\mu}$ at either $p_1$ or $p_2$ ($[\pt]=P[p_1]=Q[p_2]$) and we still obtain Theorem \ref{thm:relative}.
\end{remark}

\subsection{Open-closed Gromov-Witten invariants as relative Gromov-Witten invariants II: the general case}\label{relative2}
In the general case when $d>0$, we cannot express our open-closed Gromov-Witten invariants as relative Gromov-Witten invariants of an ordinary relative toric CY 3-fold. One way to solve this problem is to consider \emph{formal toric CY 3-folds} as in \cite{LLLZ09}, which belong to a larger class of target spaces. Then we can interpret the open-closed Gromov-Witten invariants $\langle \gamma_1,\dots,\gamma_m\rangle^{\cX,L_{P,Q},\T}_{g,n,d,\vec{\mu}}$ as relative Gromov-Witten invariants of a certain formal toric CY 3-fold. For completeness, we briefly review the construction of formal toric CY 3-folds in \cite{LLLZ09}.
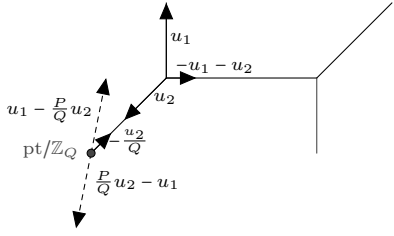
\begin{figure}
\vspace*{-2cm}
\hspace*{-0.1cm}
\definecolor{uuuuuu}{rgb}{0.26666666666666666,0.26666666666666666,0.26666666666666666}
\begin{tikzpicture}[line cap=round,line join=round,>=triangle 45,x=1.0cm,y=1.0cm]
\clip(-5.277270965464461,-4.322240478761797) rectangle (8.360643892847401,3.733739212654746);
\draw (0.,1.)-- (0.,0.);
\draw (0.,0.)-- (2.,0.);
\draw (0.,0.)-- (-1.,-1.);
\draw [->] (0.,0.) -- (0.,1.);
\draw [->] (0.,0.) -- (0.4,0.);
\draw [color=uuuuuu] (2.,0.)-- (2.,-1.);
\draw [->,line width=0.4pt] (0.,0.) -- (-0.5588226387259504,-0.5588226387259504);
\draw (2.,0.)-- (3.,1.);
\draw [->,line width=0.4pt,dash pattern=on 2pt off 2pt] (-1.,-1.) -- (-0.8,0.);
\draw [->,line width=0.4pt,dash pattern=on 2pt off 2pt] (-1.,-1.) -- (-1.2,-2.);
\draw [->,line width=0.4pt] (-1.,-1.) -- (-0.7308725784788683,-0.7308725784788683);
\begin{scriptsize}
\draw [fill=uuuuuu] (-1.,-1.) circle (1.5pt);
\draw[color=uuuuuu] (-1.5323580608298719,-0.9565334795356447) node {$\mathrm{pt}/\bZ_Q$};
\draw[color=black] (0.2008676852454157,0.5236153254659204) node {$u_1$};
\draw[color=black] (0.6466420143079925,0.15557564413216973) node {$- u_1- u_2$};
\draw[color=black] (-0.018146301262964,-0.27380398409053947) node {$u_2$};
\draw[color=black] (-1.5492811197561595,-0.4271538513129356) node {$ u_1-\frac{P}{Q} u_2$};
\draw[color=black] (-0.3972215977555294,-1.3983696770547778) node {$\frac{P}{Q} u_2 - u_1$};
\draw[color=black] (-0.5007947894594198,-0.854300234720714) node {$-\frac{u_2}{Q}$};
\end{scriptsize}
\end{tikzpicture}
\vspace*{-2.5cm}
\caption{Toric diagram of the formal toric Calab-Yau $3$-fold}
\label{fig:relative-ii}
\end{figure}

\subsubsection{Formal toric CY graphs}
Let $\Gamma$ be an oriented planar graphs. Let $E^o(\Gamma)$ denote the set of oriented edges of $\Gamma$ and let $V(\Gamma)$ denote the set of vertices of $\Gamma$. We define the \emph{orientation reversing map} $-:E^o(\Gamma)\to E^o(\Gamma), e\mapsto -e$ which reverses the orientation of an edge. Then the set of equivalent classes $E(\Gamma):=E^o(\Gamma)/\{\pm 1\}$ is the set of usual edges of $\Gamma$. There is also an \emph{initial vertex map} $\mathfrak{v}_0:E^o(\Gamma)\to V(\Gamma)$ and a \emph{terminal vertex map
}$\mathfrak{v}_1:E^o(\Gamma)\to V(\Gamma)$. We require that the orientation reversing map is fixed point free and that both $\mathfrak{v}_0$ and $\mathfrak{v}_1$ are surjective and $\mathfrak{v}_0(e)=\mathfrak{v}_1(-e)$ for $\forall e\in E^o(\Gamma)$. We also require that the valence of any vertex of $\Gamma$ is 3 or 1 and let $V_1(\Gamma)$ and $V_3(\Gamma)$ be the set of univalent vertices and trivalent vertices respectively. Notice that the absence of the bivalent vertices corresponds to the regular condition in \cite{LLLZ09}, which implies the corresponding formal toric CY 3-fold is smooth.

We define
$$
E^\mathfrak{f}=\{e\in E^o(\Gamma)|\mathfrak{v}_1(e)\in V_1(\Gamma)\}.
$$
We fix a standard basis $u_1,u_2$ of $\bZ^{\oplus 2}$ such that the ordered basis $(u_1,u_2)$ determines the orientation on $\bR^2$.

\begin{definition}
A formal toric CY graph is a planar graph $\Gamma$ described above together with a position map
$$
\mathfrak{p}:E^o(\Gamma)\to \bZ^{\oplus 2}\setminus \{0\}
$$
and a framing map
$$
\mathfrak{f}:E^\mathfrak{f}(\Gamma)\to \bZ^{\oplus 2}\setminus \{0\}
$$
such that
\begin{enumerate}
\item $\mathfrak{p}(-e)=-\mathfrak{p}(e)$ for $\forall e\in E^o(\Gamma)$.\\
\item  For a trivalent vertex $v\in V_3(\Gamma)$ with $\mathfrak{v}_0^{-1}(v)=\{e_1,e_2,e_3\}$, we have $\mathfrak{p}(e_1)+\mathfrak{p}(e_2)+\mathfrak{p}(e_3)=0$.\\
\item For a trivalent vertex $v\in V_3(\Gamma)$ with $\mathfrak{v}_0^{-1}(v)=\{e_1,e_2,e_3\}$, any two vectors in $\{\mathfrak{p}(e_1),\mathfrak{p}(e_2),\mathfrak{p}(e_3)\}$ form an integral basis of $\bZ^{\oplus 2}$.\\
\item For $e\in E^\mathfrak{f}(\Gamma), \mathfrak{p}(e)\wedge \mathfrak{f}(e)=u_1\wedge u_2$.

\end{enumerate}

\end{definition}

\subsubsection{Formal toric CY 3-folds}
The motivation of constructing the formal toric CY 3-folds is the following observation. Let $Y$ be a toric CY 3-fold and let $T\cong (\bC^*)^2$ be the CY torus acting on $Y$. Let $Y^1$ be the closure of the 1-dimensional $T-$orbits closure in $Y$. Then the $T-$equivariant Gromov-Witten theory of $Y$ is determined by $Y^1$ and the normal bundle of each irreducible component of $Y^1$ in $Y$. In other words, we only need the information of the formal neighborhood of $Y^1$ in $Y$ to study the $T-$equivariant Gromov-Witten theory of $Y$. So one may define $\hat{Y}$ to be the formal completion of $Y$ along $Y^1$ and try to define and study the Gromov-Witten theory of $\hat{Y}$. The advantage of considering $\hat{Y}$ is that if we are given a formal toric CY graph $\Gamma$, we can always construct an analogue of $\hat{Y}$ even if there does not exist a usual toric CY 3-fold corresponding to this formal toric CY graph. This construction is called the \emph{formal toric CY 3-fold} associated to $\Gamma$. The construction of formal toric CY 3-folds can be described as follows.

Let $\Gamma$ be a formal toric CY graph. For any edge $e\in E(\Gamma)$, we can associate a relative toric CY 3-fold $(Y_e,D_e)$, where $Y_e$ is the total space of the direct sum of two line bundles  over $\bP^1$ and $D_e$ is a divisor of $Y_e$ which can be empty or the fiber(s) of $Y_e\to \bP^1$ over 1 or 2 points on $\bP^1$, depending on the number of univalent vertices of $e$ (see \cite{LLLZ09}). Here $(Y_e,D_e)$ being a relative CY 3-fold means
$$\Lambda^3\Omega_{Y_e}(\log D_e)\cong\cO_{Y_e}.$$
Furthermore, the formal toric graph also determines a $T\cong (\bC^*)^2$ action on $Y_e$ and $D_e$ is a $T$-invariant divisor. Let $\Sigma(e)$ be the formal completion of $Y_e$ along its zero section $\bP^1$. The divisor $D_e$ descends to a divisor $\hat{D}_e$ of $\Sigma(e)$ and there is an induced $T$ action on $\Sigma(e)$. The formal relative toric CY 3-fold $(\hat{Y},\hat{D})$ is obtained by gluing all the $(\Sigma(e),\hat{D}_e)$'s along the trivalent vertices of $\Gamma$ (Each trivalent vertex $v$ corresponds to a formal scheme $\Spec(\bC[[x_1,x_2,x_3]])$, which can be naturally embedded into $\Sigma(e)$ for $e$ connected to $v$). As a set, $\hat{Y}$ is a union of $\bP^1$'s. Each connected component of the divisor $\hat{D}$ corresponds to a univalent vertex of $\Gamma$. The $T$ actions on each $\Sigma(e)$ are also glued together to form a $T$ action on $\hat{Y}$ such that the divisor $\hat{D}$ is $T$-invariant.

The result in \cite{LLLZ09} shows that one can define $T-$equivariant relative Gromov-Witten invariants for the formal relative toric CY 3-fold $(\hat{Y},\hat{D})$. When the formal toric CY graph comes from the toric graph of a usual toric CY 3-fold, these formal Gromov-Witten invariants coincide with the usual Gromov-Witten invariants of the toric CY 3-fold.

\subsubsection{Our case}
Now we apply the above construction to our case. First of all, we should notice that the above construction can be generalized to the case when there are orbifold structures on $\hat{D}$. This means that if we have an edge $e\in E^\mathfrak{f}(\Gamma)$, $Y_e$ can be the direct sum of two orbifold line bundles over the weighted projective line $\bP(1,Q)$ and the divisor $D_e$ is the fiber over the orbifold point on $\bP(1,Q)$.

Now we consider the following formal toric CY graph $\Gamma$ (see Figure \ref{fig:relative-ii}). There are two trivalent vertices $v_0$ and $v_1$ and an edge $e$ connecting them. There are four edges $e_1,e_2,e_3,e_4\in E^\mathfrak{f}(\Gamma)$ such that $\mathfrak{v}_0(e_1)=\mathfrak{v}_0(e_2)=v_0,
\mathfrak{v}_0(e_3)=\mathfrak{v}_0(e_4)=v_1$. The graph $\Gamma$ defines a formal relative toric CY 3-orbifold $(\hat{Y},\hat{D})$. The divisor $\hat{D}$ is of the form $\hat{D}_1\cup\hat{D}_2\cup\hat{D}_3\cup\hat{D}_4$, where each $\hat{D}_i$ is a connected component of $\hat D$ corresponding to the univalent vertex $\mathfrak{v}_1(e_i)$. The 3-fold $\hat{Y}$ is obtained by gluing $\Sigma(e),\Sigma(e_1),\Sigma(e_2)$ along the trivalent vertex $v_0$ and by gluing $\Sigma(e),\Sigma(e_3),\Sigma(e_4)$ along the trivalent vertex $v_1$. Here $\Sigma(e)$ is the formal completing of $Y_e$ along the zero section and $Y_e$ is the total space of $\cO_{\bP^1}(-1)\oplus\cO_{\bP^1}(-1)\to\bP^1$. Similarly, let $Y_{e_1}$ be the total space of $N_{D_1/\fX}\oplus \cO_{\fX}(-D_1-D_2)\mid_{D_1}\to D_1$ and then $\Sigma(e_1)$ is the formal completion of $Y_{e_1}$ along the zero section $\bP(1,Q)$. The $T$ action on $\cO_{\fX}(-D_1-D_2)\to\fX$ (see Section \ref{relative1}) induces a $T$ action on $Y_{e_1}$ which coincides with the $T$ action given by the formal toric CY graph.

\subsubsection{The formal relative Gromov-Witten invariants}
Let $\Mbar_{g,m}(\hat{Y},\hat{D}_1,d,\mu)$ be the moduli space of relative stable maps to $(\hat{Y},\hat{D}_1)$, where $\mu=(\mu_1,\cdots,\mu_n)$ is a partition of some positive integer $d'$ and $d$ is the degree of the stable map restricted to those components which are mapped to the $T-$invariant 1-orbit corresponding to the edge $e$. Define the formal relative Gromov-Witten invariant as
\begin{equation}
\langle \gamma_1,\dots,\gamma_m\mid\mu,\hat{D}_1\rangle^{\hat{Y},T}_{g,m,d,\mu}
:=\prod_{i=1}^{n}(u_1-\frac{P}{Q}u_2)
\int_{[\Mbar_{g,m}(\hat{Y},\hat{D}_1,d,\mu)^T]^{\vir}}
\frac{\prod_{j=1}^m \ev_j^*\gamma_j}{e_{T}(N^\vir)}.
\end{equation}
where $\gamma_1,\cdots,\gamma_m$ are $T-$equivariant cohomology classes of $\cX$. Notice that since the divisor $\hat{D}$ is decomposed into 4 connected components $\hat{D}_1\cup\hat{D}_2\cup\hat{D}_3\cup\hat{D}_4$, the above relative Gromov-Witten invariant is a special case of the relative Gromov-Witten invariant of $(\hat{Y},\hat{D})$ by letting the ramification profiles over $\hat{D}_2,\hat{D}_3,\hat{D}_4$ be empty. Therefore, the result in \cite{LLLZ09} shows that the above formal relative Gromov-Witten invariant is well-defined. We should also notice that since $\gamma_1,\cdots,\gamma_m$ are $T-$equivariant cohomology classes of $\cX$, they can be viewed as $T-$equivariant cohomology classes of $\hat{Y}$.

By Theorem \ref{thm:relative}, Remark \ref{T} and standard localization computation on $\langle \gamma_1,\dots,\gamma_m\mid\mu,\hat{D}_1\rangle^{\hat{Y},T}_{g,m,d,\mu}$, it is easy to obtain the following theorem

\begin{theorem}
The open-closed Gromov-Witten invariant $\langle \gamma_1,\dots,\gamma_m\rangle^{\cX,L_{P,Q},\T}_{g,m,d,\vec{\mu}}$ can be expressed as the formal relative Gromov-Witten invariant in the following way:
$$
\langle \gamma_1,\dots,\gamma_m\rangle^{\cX,L_{P,Q},\T}_{g,m,d,\vec{\mu}}=
\langle \gamma_1,\dots,\gamma_m\mid\mu,\hat{D}_1\rangle^{\hat{Y},T}_{g,m,d,\mu}
\mid_{u_1=Pu,u_2=Qu}
$$
\end{theorem}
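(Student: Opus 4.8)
The plan is to prove the identity by comparing two virtual localization computations, one on each side, and matching them term by term after the substitution $u_1=Pu,\,u_2=Qu$. On the left, Proposition \ref{prop:localization} already expresses $\langle \gamma_1,\dots,\gamma_m\rangle^{\cX,\L,\T}_{g,m,d,\vmu}$ as an integral over the $\T$-fixed locus of $\Mbar_{g,m,d,\vmu}$: a central $\T$-invariant map into $\cX$ carrying the interior insertions $\gamma_j$, with $h=l(\vmu)$ disks $D_i$ attached, each disk contributing the factor $D(\mu_i)$ and an insertion $\ev^*\phi^0$ at its node. On the right I would apply $T$-localization to $\Mbar_{g,m}(\hat{Y},\hat{D}_1,d,\mu)$, obtaining a sum over $T$-fixed relative stable maps, and only afterward restrict to the subtorus $u_1=Pu,\,u_2=Qu$. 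The two fixed-point sums will be shown to be indexed by the same combinatorial data, and the bulk of the work is to produce this bijection and match the integrands.

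\textbf{Step 1 (bijection of fixed loci).} I would identify the edge $e$ of the formal graph $\Gamma$ with the invariant $\bP^1\subset\cX$, so that over $e$ a $T$-fixed relative map to $\hat{Y}$ looks exactly like a $\T$-fixed map to $\cX$: a possibly contracted curve over the vertex $v_0\leftrightarrow\fp_0$ and over $v_1\leftrightarrow\fp_1$, together with covers of $e$. The relative tails over $\hat{D}_1$, which are rational components ramified with profile $\mu$ over the orbifold point of $\bP(1,Q)$, play the role of the disks $D_i$: the cover degree $\mu_i$ matches the disk winding $\mu_i$, and the univalent orbifold structure of $\hat{D}_1$ reproduces the single node carrying $\phi^0$. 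Since $\gamma_1,\dots,\gamma_m$ are pulled back from $\T$-equivariant classes of $\cX$ and hence define $T$-equivariant classes of $\hat{Y}$ supported away from the tails, the interior insertions match automatically.

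\textbf{Step 2 (matching contributions).} Here I would invoke the degree-zero computation. The entire contribution of one relative tail over $\hat{D}_1$ --- a multiple cover of the fiber $\bP^1$ of $Y_{e_1}=N_{D_1/\fX}\oplus\cO_{\fX}(-D_1-D_2)|_{D_1}$ ramified to order $\mu_i$ over the orbifold point --- is precisely the localization integrand that produced the disk factor $D(\mu_i)$ in the proof of Theorem \ref{thm:relative}. The obstruction bundle $\cO_{\fX}(-D_1-D_2)$ together with the $\bP(1,Q)$ orbifold structure supplies the factorials $(P\mu_i)!$, $(Q\mu_i)!$ and the sign $(-1)^{(P+Q)\mu_i-1}$ in $D(\mu_i)$. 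Meanwhile, after substituting $u_1=Pu,\,u_2=Qu$ the vertex weight at $v_0$ and the edge-$e$ weights reproduce the tangent weights $P\sv,-Q\sv$ (and $Q\sv,-P\sv$ at $v_1$) used in the $\T$-equivariant theory of $\cX$, so the central map contributes identically on both sides. Summing over all fixed loci and applying the gluing axiom at the trivalent vertex $v_0$ to separate the relative tails from the central map makes the two localization sums termwise equal, and the substitution sends the $T$-equivariant parameters to $\sv$, completing the identity. This is the content signaled by the reference to Theorem \ref{thm:relative} and Remark \ref{T}.

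\textbf{Main obstacle.} The delicate point is Step 2, specifically the role of the prefactor $\prod_{i=1}^{h}(u_1-\tfrac{P}{Q}u_2)$ in the definition of the formal relative invariant. Under the specialization $u_1=Pu,\,u_2=Qu$ one has $u_1-\tfrac{P}{Q}u_2\mapsto 0$, so this factor vanishes; its purpose is exactly to cancel the pole of $1/e_T(N^\vir)$ coming from the normal weight of the relative divisor $\hat{D}_1$ (equivalently, the smoothing parameter at the node separating a tail from the central map), rendering the limit finite. The hard part will be carrying out this $0/0$ analysis carefully: one must check that the order of vanishing of $\prod_i(u_1-\tfrac{P}{Q}u_2)$ matches the order of the pole contributed by the $h$ relative conditions, so that the restricted invariant is both finite and equal to the disk factors $D(\mu_i)$. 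This is precisely why the formal invariant is defined over the full $2$-torus $T$ and only afterward restricted to the one-parameter subgroup, as in Remark \ref{T}. Once this pole-cancellation is verified, the remaining matching of vertex, edge, and obstruction contributions is the same localization bookkeeping already performed in Theorem \ref{thm:relative}.
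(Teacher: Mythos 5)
Your proposal takes essentially the same route the paper intends: the paper offers no written proof beyond the assertion that the result follows from Theorem \ref{thm:relative}, Remark \ref{T}, and a ``standard localization computation'' on the formal relative invariant, and your two steps (matching $T$-fixed loci of the formal relative theory with the $\T$-fixed loci of Proposition \ref{prop:localization}, then identifying the tail contributions with the disk factors $D(\mu_i)$ via the degree-zero computation) are exactly that argument made explicit. Your flagged obstacle is also the right one: the prefactor $\prod_i(u_1-\tfrac{P}{Q}u_2)$ vanishes under $u_1=Pu$, $u_2=Qu$ and must cancel the first-order pole contributed by each relative tail's deformation in the $N_{D_1/\fX}$ direction at the orbifold point, a cancellation the paper leaves entirely implicit.
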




\section{The spectral curve of a torus knot}
\label{sec:spectral}

\subsection{Mirror curve and the disk invariants}

The mirror curve to a resolved conifold
$\cX=[\cO_{\bP^1}(-1)\oplus\cO_{\bP^1}(-1)]$ is the following smooth affine
curve $C_q$
$$1+U+V+q U V=0$$
in $(\bC^*)^2$. This curve allows a compactification into a genus $0$ projective
curve $\overline C_q$ in $\bP^1\times \bP^1$, where $(1:U)$ and $(1:V)$ are
homogeneous coordinates for each $\bP^1$.

For coprime $P,Q\in \bZ$, consider the following change of variables
$$
X=U^Q V^P,\quad Y=U^\gamma V^\delta,
$$
where integers $\gamma,\delta$ are (not uniquely) chosen such that
$$
\begin{pmatrix}
Q & P\\
\gamma & \delta
\end{pmatrix}\in \mathrm{SL}(2;\bZ).
$$
We define the spectral curve as the quadruple
\[
(C_q\subset (\bC^*)^2, \overline C_q \subset \bP^1\times \bP^1, X, Y).
\]
The variables $X,Y$ are holomorphic functions on $C_q$ and meromorphic on $\overline C_q$. \footnote{Usually one says the mirror curve to $\cX$ is just $C_q$. This implicitly considers $\cX$ with Aganagic-Vafa branes (conifold transitions of an unknot) while choosing $U,V$ as the holomorphic functions.}

Conversely, $V=X^{-\gamma} Y^Q$. We let
\[
e^{-u}=U,\ e^{-v}=V,\ e^{-x}=X,\ e^{-y}=Y.
\]
The mirror curve equation can be rewritten into
$$
X=-V^P\left ( \frac{V+1}{1+q V}\right )^Q.
$$
We solve $v=-\log V(X)$ around $\mathfrak s_0=(X,V)=(0,-1)$ with $v|_{X=0}=-\sqrt{-1}\pi$. Let $\eta=X^{\frac{1}{Q}}$. Here $\eta$ is a local coordinate for the mirror curve $\overline C_q$ around $V=-1$. 
There exists $\delta>0$ and $\epsilon>0$ such that for $|q|<\epsilon$, the function $\eta$ is well-defined and restricts to an isomorphism
\[\eta: D_q \to D_\delta=\{\eta\in \bC:|\eta|<\delta\},\]
where $D_q\subset \overline C_q$ is an open neighborhood of $\mathfrak s_0$. Denote the inverse map of $\eta$ by $\rho_q$ and
\[
\rho^{\times n}_q=\rho_q\times \dots \times \rho_q : (D_\delta)^n \to (D_q)^n\subset (\overline C_q)^n.
\]

By the Lagrange inversion theorem, we have
$$
v(\eta)=-\sqrt{-1}\pi -\sum_{w> 0, 0\leq d \leq w} w B_{0,1}(w,d)\eta^w q^d.
$$
where
\begin{align}
B_{0,1}(w,d)=\frac{1}{w}\frac{e^{-\pi \sqrt{-1}(\frac{(P+Q)w}{Q} - d -1)}}{\Gamma(d+1)
  \Gamma(w-d+1)} \prod_{m=   1}^{w -1} (m-d+\frac{Pw}{Q}).\label{eqn:B-disk}
\end{align}
We define
\[
W_{0,1}(\eta,q) =-\int_{\eta'=0}^{\eta'=\eta} \rho_q^* (v(\eta')-v(0))\frac{d\eta'}{\eta'}= \sum_{w\geq 0, 0\leq d \leq w} B_{0,1}(w,d) \eta^w q^d.
\]
The numbers $B_{0,1}(w,d)$ are called \emph{ B-model disk invariants}, and
$W_{0,1}(\eta,q)$ is the \emph{B-model disk amplitude}.

\subsection{Differential forms on a spectral curve}

Eynard-Orantin's topological recursion \cite{EO07} is a recursive
algorithm which produces higher genus B-model invariants. It needs the
following input data
\begin{itemize}
\item
The affine curve $C_q$, its compactification $\overline C_q$ and meromorphic functions $X,Y$ on $\overline C_q$ which are holomorphic on $C_q$.
\item
A fundamental bidifferential $\omega_{0,2}$ on $(\overline C_q)^2$
$$
\omega_{0,2}=\frac{dU_1dU_2}{(U_1-U_2)^2}.
$$The choice of $\omega_{0,2}$ involves a symplectic basis on $H_1(\overline C_q;\bC)$ -- since the genus of our $\overline C_q$ is $0$, this extra piece of datum is not needed.
\end{itemize}

There are two ramification points of the map $X: C_q\to \bC$, labeled by $P_0=(U_0(q),V_0(q)),P_1=(U_1(q),V_1(q))$ for $\si=0,1$. They are given in $(U,V)$ coordinates below
\begin{align*}
U_0(q)&=-\frac{-\sqrt{(P (-q)+P+q Q+Q)^2-4 q Q^2}+P (-q)+P+q Q+Q}{2 q Q},\\
V_0(q)&=-\frac{-\sqrt{(P (-q)+P+q Q+Q)^2-4 q Q^2}+P q+P-q Q+Q}{2 P q},\\
U_1(q)&=-\frac{\sqrt{(P (-q)+P+q Q+Q)^2-4 q Q^2}+P (-q)+P+q Q+Q}{2 q Q},\\
V_1(q)&=\frac{\sqrt{(P (-q)+P+q Q+Q)^2-4 q Q^2}+P q+P-q Q+Q}{2 P q}.
\end{align*}
In particular,
\[
U_0(0)=-\frac{Q}{P+Q},\quad V_0(0)=\frac{-P}{P+Q}.
\]
 Let $e^{-x}=X,e^{-y}=Y, e^{-u}=u, e^{-v}=V$. Near each ramification point $P_\si$ with $x(P_\si)=x_\si$, $y(P_\si)=y_\si$, we expand
\begin{align*}
&x=x_\si+\zeta_\si^2,\\
&y=y_\si+\sum_{k\in\bZ_{\geq 1}}h^\si_k (\zeta_\si)^k.
\end{align*}

We expand the fundamental bidifferential
\[
\omega_{0,2}=\left (\frac{\delta_{\si\si'}}{(\zeta_\si-\zeta_{\si'})^2}+\sum_{k,l\in \bZ_{\geq 0}} B^{\si,\si'}_{k,l} \zeta_\si^k \zeta_{\si'}^l\right )d\zeta_\si d\zeta_{\si'},
\]
and define
\begin{align*}
\check B^{\si,\si'}_{k,l} &= \frac{(2k-1)!!(2l-1)!!}{2^{k+l+1}}B^{\si,\si'}_{k,l},\\
\check h^\si_k &= 2(2k-1)!! h^\si_{2k-1}.
\end{align*}
We also define the differential of the second kind \cite{EO07}.
\[
\theta^d_\si(p)=-(2d-1)!!2^{-d}\Res_{p'\to p_\si} B(p,p')\zeta_\si^{-2d-1}.
\]
They satisfy and are uniquely characterized by the following properties.
\begin{itemize}
\item $\theta^d_\si$ is a meromorphic $1$-form on $\overline C_q$ with a single pole of order $2d+2$ at $P_\si$.
\item In local coordinates
\[
\theta^d_\si=\left(-\frac{(2d+1)!!}{2^d\zeta_d^{2d+2}}+\text{analytic part in $\zeta_\si$}\right).
\]
\end{itemize}

In the asymptotic expansion
we define the formal power series by the asymptotic expansion
\begin{align*}
\check R_{\si'}^{\ \si}(z)&=\frac{\sqrt{z}e^{-\frac{\check  u^\si}{z}}}{2\sqrt{\pi}}
\int_{\gamma_\si}e^{-\frac{ x}{z}}\theta^0_{\si'}.
\end{align*}
Here $\gamma_\alpha$ is the Lefschetz thimble under the map $x$, i.e. $x(\gamma_\alpha)=[x_\alpha,\infty)$.

For $\si=0,1$, define
\begin{align*}
&\hat \xi^k_{\si}=(-1)^k(\frac{d}{d x})^{k-1} \frac{\theta_{\si}^0}{d x},\ k\in \bZ_{\geq 1},\\
&\htheta^k_\si=d\hxi^k_\si,\ k\geq 1,\quad \htheta^0_\si=\theta^0_\si,\\
&\theta_\si(z)=\sum_{k=0}^\infty \theta_\si^k z^k,\quad \htheta_\si(z)=\sum_{k=0}^\infty \htheta^k_\si z^k.
\end{align*}
We have the following proposition from \cite[Proposition 6.5]{FLZ16}.
\begin{proposition}
\label{prop:b-open-leaf}
\[
\theta_\si(z)=\sum_{\si'=0}^1 \check R_{\si'}^{\ \si}(z)\htheta_{\si'}(z).
\]
\end{proposition}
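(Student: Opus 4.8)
The plan is to exploit that $\overline C_q$ has genus $0$, so that a meromorphic $1$-form with vanishing residues is completely determined by its principal parts. First I would observe that every form occurring on either side of the asserted identity is a differential of the second kind. Indeed $\htheta^k_{\si'}=d\hxi^k_{\si'}$ is exact, hence has zero residue at every point; $\htheta^0_\si=\theta^0_\si$ and the $\theta^d_\si$ have zero residue because the residue construction from $\omega_{0,2}$ produces, at $P_\si$, only a single even-order pole term and nothing at the other ramification point. Consequently both sides are, coefficientwise in $z$, second-kind differentials on $\overline C_q\cong\bP^1$, and the identity reduces to matching their principal parts at the two ramification points $P_0,P_1$, order by order in $z$.

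Next I would pin down the target. Expanding $\omega_{0,2}$ as prescribed and evaluating the residue in $\theta^d_\si=-(2d-1)!!\,2^{-d}\Res_{p'\to p_\si}B(p,p')\zeta_\si^{-2d-1}$, the only singular contribution comes from the $(\zeta_\si-\zeta_{\si'})^{-2}$ part, which yields the single term $-\tfrac{(2d+1)!!}{2^d}\zeta_\si^{-2d-2}\,d\zeta_\si$, while the $B^{\si,\si}_{k,2d}$ part stays analytic. Hence $\theta_\si(z)=\sum_d\theta^d_\si z^d$ has principal part $\sum_d-\tfrac{(2d+1)!!}{2^d}\zeta_\si^{-2d-2}\,d\zeta_\si\,z^d$ at $P_\si$ and no pole at the other point $P_{\si''}$. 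This is exactly the data I must reproduce from the right-hand side.

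I would then compute $\check R$ by the Laplace method. On the thimble $\gamma_\si$ one has $x-x_\si=\zeta_\si^2$, so with $\check u^\si=x_\si$ the factor $e^{-\check u^\si/z}e^{-x/z}$ becomes $e^{-\zeta_\si^2/z}$ and the integral localizes at $P_\si$; expanding $\theta^0_{\si'}$ in the coordinate $\zeta_\si$ and using the Gaussian moments $\int e^{-\zeta^2/z}\zeta^{2k}\,d\zeta=\sqrt{\pi z}\,(2k-1)!!\,(z/2)^k$ (odd powers dropping out), $\check R_{\si'}^{\ \si}(z)$ is identified with the double-factorial-normalized generating series of the even coefficients of $\theta^0_{\si'}$ at $P_\si$. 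In particular $\check R_\si^{\ \si}(z)=1+O(z)$ (the leading $1$ arising from the regularized double-pole term), while the off-diagonal entries are assembled from the $B^{\si,\si'}$ coefficients of $\omega_{0,2}$, equivalently the $\check B^{\si,\si'}$. In parallel, using $\tfrac{d}{dx}=\tfrac1{2\zeta}\tfrac{d}{d\zeta}$ near each ramification point, I would compute the principal parts of $\htheta^k_{\si'}$: at its own point $P_{\si'}$ the leading term is again $-\tfrac{(2k+1)!!}{2^k}\zeta_{\si'}^{-2k-2}\,d\zeta_{\si'}$, matching $\theta^k_{\si'}$, but because $dx$ vanishes at $P_{\si''}$ the operator $(d/dx)^{k-1}$ creates additional poles of $\htheta^k_{\si'}$ at the \emph{other} ramification point, which must be recorded precisely.

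With these pieces in hand the identity follows by comparison of principal parts. On the right-hand side the diagonal term $\check R_\si^{\ \si}(z)\htheta_\si(z)$ supplies the leading pole at $P_\si$, and hence the full target principal part there; the subleading poles of $\htheta_\si$ at $P_\si$ and the poles contributed at $P_\si$ by the off-diagonal term $\check R_{\si''}^{\ \si}(z)\htheta_{\si''}(z)$ must cancel, while the entire principal part at $P_{\si''}$ must vanish. I expect this cancellation at $P_{\si''}$ to be the main obstacle: it is precisely where the definition of $\check R$ as the Laplace transform is indispensable, since the Gaussian-moment coefficients building $\check R_{\si'}^{\ \si}$ are the very Taylor coefficients of $\theta^0_{\si'}$ at $P_\si$ that control the poles produced by the derivatives $(d/dx)^{k-1}$. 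I would therefore verify the vanishing at $P_{\si''}$ first, as a single generating-function identity for Gaussian moments; this then forces the $P_\si$ principal part to equal that of $\theta_\si(z)$. Since both sides are second-kind differentials on a genus-$0$ curve with equal principal parts, they coincide, which proves the proposition.
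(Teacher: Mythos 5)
The paper does not actually prove this proposition: it is quoted verbatim from \cite[Proposition 6.5]{FLZ16} (and ultimately goes back to \cite{Ey11} and \cite{DOSS}), so there is no in-paper argument to compare yours against. Your strategy --- both sides are, coefficientwise in $z$, residue-free meromorphic $1$-forms on the genus-zero curve $\overline C_q$, hence determined by their principal parts at $P_0,P_1$; the Laplace transform over the thimble $\gamma_\si$ turns $\check R_{\si'}^{\ \si}(z)$ into the generating series of the even Taylor coefficients of $\theta^0_{\si'}$ at $P_\si$ (with $\check R_{\si}^{\ \si}(z)=1+O(z)$ from the regularized double pole); and these same coefficients govern the poles that $(d/dx)^{k-1}$ creates at the ramification points --- is exactly the mechanism used in the cited proofs. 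Your computations of the principal part of $\theta^d_\si$ and of the leading pole of $\htheta^k_\si$ at $P_\si$ are correct, as is your observation that $\htheta^k_{\si'}$ ($k\ge 1$) acquires poles at the \emph{other} ramification point because $dx$ vanishes there.

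What is missing is the proof's only substantive step: the generating-function identity showing that in $\sum_{\si'}\check R_{\si'}^{\ \si}(z)\htheta_{\si'}(z)$ the entire principal part at $P_{\si''}$ ($\si''\ne\si$) cancels and the principal part at $P_\si$ collapses to the single monomial $-\tfrac{(2d+1)!!}{2^d}\zeta_\si^{-2d-2}\,d\zeta_\si$ in each coefficient of $z^d$. You correctly identify this as ``a single generating-function identity for Gaussian moments'' but defer it, so as written the proposal is a (correct) plan rather than a proof. There is also a small logical overreach: vanishing at $P_{\si''}$ does not by itself ``force'' the matching at $P_\si$; the diagonal matching is a separate instance of the same computation, since the subleading poles of $\htheta_\si(z)$ at $P_\si$ --- driven by the analytic part of $\theta^0_\si$ there, i.e.\ the diagonal coefficients $B^{\si,\si}_{2k,0}$ that also enter $\check R_{\si}^{\ \si}$ --- must be cancelled as well. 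Finally, a cosmetic point: for the exponential prefactor to cancel the critical value one needs $\check u^\si=-x_\si$ rather than $x_\si$ (the paper never defines $\check u^\si$), otherwise a spurious factor $e^{-2x_\si/z}$ survives.
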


Define the following meromorphic form on $(\overline C_q)^2$
\[
C(p_1,p_2)=-(\frac{\partial}{\partial x(p_1)}-\frac{\partial}{\partial
  x(p_2)} )\left ( \frac{\omega_{0,2}}{dx(p_1) dx(p_2)}\right
)(p_1,p_2) d x(p_1) d x(p_2).
\]
It is holomorphic on $(\overline C_q)^2\setminus
\{P_\si:\si=0,1\}$. Lemma 6.8 of \cite{FLZ16} says
\begin{equation}
\label{eqn:C}
C(p_1,p_2)=\frac{1}{2}\sum_{\si=0}^1 \theta^0_\si(p_1) \theta^0_\si(p_2).
\end{equation}

\subsection{Higher genus invariants from Eynard-Orantin's topological recursion}
\label{sec:EO-recursion}

For a point $p$ around a ramification point $P_\alpha$, denote $\bar p$
to be the point such that $X(\bar p)=X(p)$, and $\bar p\neq p$ (i.e. $\zeta_\si (p)=-\zeta_\si(\bar p)$). The Eynard-Orantin recursion is the following.
\begin{definition}
\begin{align*}
\omega_{g,n+1}(p_1,\dots, p_{n+1})=&\sum_{\alpha=0}^1\Res_{p\to P_\alpha}\frac{-\int_{\xi=\bar p}^{p}
  \omega_{0,2}(p_{n+1},p)}{2(\Phi(p)- \Phi(\bar
  p))}\cdot(\omega_{g-1,n+2}(p,\bar p, p_1,\dots, p_n)\\
&+\sum'_{g_1+g_2=g, I\sqcup J=\{1,\dots, n\}}
  \omega_{g_1,|I|}(p_I, p) \omega_{g_2,|J|} (p_J,\bar p)),
\end{align*}
where $\Phi=\log Y\frac{dX}{X}$, and $\sum'$ excludes the case
$(g_1,|I|)=(0,0), (0,1), (g,n-1), (g,n)$.
\end{definition}
As shown in \cite{EO07}, these $\omega_{g,n}$ are symmetric meromorphic forms on $(\overline C_q)^n$, and they are smooth on $(\overline C_q\setminus \{P_0, P_1\})^n$. We define the B-model open potentials as below
\begin{align*}
W_{0,2}(\eta_1,\eta_2,q)&=\int_{0}^{\eta_1}\int_0^{\eta_2}
  (\rho_q^{\times 2})^* C,\\
W_{g,n}(\eta_1,\dots,\eta_n,q)&=\int_{0}^{\eta_1}\dots \int^{\eta_n}
                              (\rho_q^{\times n})^*\omega_{g,n},\ 2g-2+n>0.
\end{align*}

The Eynard-Orantin topological recursion expresses the B-model
invariants $\omega_{g,n}$ as graph sums \cite{DOSS}. For each
decorated graph $\vGa\in \bGa_{g,n}$ and $\bp=(p_1,\dots,p_n)\in (\overline C_q)^n$, we assign the following weights to each graph component.

\begin{itemize}
\item Vertex: for a vertex $v\in V(\Ga)$ labeled by $\si=\beta(v)$ and
  $g(v)$, the weight is $$\left (\frac{h_1^\si}{\sqrt{-2}}\right)^{2-2g-\val(v)}\langle \prod_{h\in H(v)} \tau_{k(h)}\rangle_{g(v)}.$$
\item Edge: for an edge $e\in E(\Ga)$ we assign the weight $\check B^{\beta(v_1(e)),\beta(v_2(e))}_{k,l}$. In \cite{EO07}, it is known to be equal to
\[
[z^kw^l]  \left(\frac{1}{z+w}(\delta_{\beta(v_1(e)),\beta(v_2(e))} - \check R(z)_{\gamma}^{\ \beta(v_1(e))}\check R(w)_{\gamma}^{\ \beta(v_2(e))} )\right).
\]
\item Dilaton leaf: for a dilaton leaf $l\in \cL^1(\Ga)$ we assign the weight
$$(\check{\mathcal L^1})^{\beta(l)}_{k(l)}=( -\frac{1}{\sqrt{-2}})[z^{k(l)-1}]\sum_{\si=0}^1 h_1^\si \check R_\si^{\ \beta(l)}.$$
\item Ordinary leaf: for the $j$-th ordinary leaf $l_j$ we assign the weight
\begin{align*}
(\check\cL^\bp)^{\beta(l_j)}_{k(l_j)}(l_j)=-\frac{1}{\sqrt{-2}} \theta^{k(l_j)}_{\beta(l_j)}(p_j).
\end{align*}
\end{itemize}
We define the ordinary B-model weight of a decorated graph to be
\begin{align*}
w^\bp_B(\vGa)=&(-1)^{g(\vGa)-1}\prod_{v\in V(\Ga)}\left (\frac{h_1^{\beta(v)}}{\sqrt{-2}}\right)^{2-2g-\val(v)}\langle \prod_{h\in H(v)} \tau_{k(h)}\rangle_{g(v)}\cdot \prod_{e\in E(\Ga)}\check B^{\beta(v_1(e)),\beta(v_2(e))}_{k,l}\\
&\cdot \prod_{l\in \cL^1(\Ga)}(\check{\mathcal L^1})^{\beta(l)}_{k(l)}\cdot \prod_{j=1}^n (\check\cL^\bp)^{\beta(l_j)}_{k(l_j)}(l_j).
\end{align*}
For $\eta=(\eta_1,\dots,\eta_n)$, one defines the B-model open leaf weight associated to an ordinary leaf $l_j$
$$(\check{\cL}^{\eta}_j)^{\beta(l_j)}_{k(l_j)} (l_j) = -\frac{1}{\sqrt{-2}} \int_0^{\eta_j} \rho_q^*\theta_{\bsi}^k.
$$
We define the B-model open weight by substituting the ordinary leaf term by the open leaf
\begin{align}
  \label{eqn:B-graph-weight}
w^\eta_B(\vGa)=&(-1)^{g(\vGa)-1}\prod_{v\in V(\Ga)}\left (\frac{h_1^{\beta(v)}}{\sqrt{-2}}\right)^{2-2g-\val(v)}\langle \prod_{h\in H(v)} \tau_{k(h)}\rangle_{g(v)}\cdot \prod_{e\in E(\Ga)}\check B^{\beta(v_1(e)),\beta(v_2(e))}_{k,l}\\
&\cdot \prod_{l\in \cL^1(\Ga)}(\check{\mathcal L^1})^{\beta(l)}_{k(l)}\cdot \prod_{j=1}^n (\check\cL^\eta)^{\beta(l_j)}_{k(l_j)}(l_j).\nonumber
\end{align}

\begin{theorem}[Graph sum formula for $\omega_{g,n}$ \cite{Ey11, DOSS}]
\label{thm:B-model-graph}
\begin{align}
\label{eqn:B-graph-sum}
\omega_{g,n}(\bp)&=\sum_{\vGa\in \bGa_{g,n}}\frac{w^\bp_B(\vGa)}{\Aut(\vGa)};\\
\nonumber\int_0^{\eta_1}\dots \int_0^{\eta_n} (\rho_q^{\times n})^*\omega_{g,n}&=\sum_{\vGa\in \bGa_{g,n}}\frac{w^\eta_B(\vGa)}{\Aut(\vGa)}.
\end{align}
\end{theorem}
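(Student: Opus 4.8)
The plan is to reduce the theorem to the first identity \eqref{eqn:B-graph-sum} for $\omega_{g,n}(\bp)$, and then to obtain that identity from the identification of the Eynard--Orantin recursion with Givental's graph-sum formula for a semisimple cohomological field theory, as established in \cite{Ey11, DOSS}. First I would dispose of the second, integrated identity as a formal consequence of the first: in the ordinary weight $w^\bp_B(\vGa)$ every factor except the ordinary-leaf contributions $(\check\cL^\bp)^{\beta(l_j)}_{k(l_j)}(l_j)=-\frac{1}{\sqrt{-2}}\theta^{k(l_j)}_{\beta(l_j)}(p_j)$ is independent of the point $p_j$, and the $n$-form $w^\bp_B(\vGa)$ factorizes with exactly one leg in each $p_j$. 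Hence applying $\int_0^{\eta_1}\dots\int_0^{\eta_n}(\rho_q^{\times n})^*$ to \eqref{eqn:B-graph-sum} and pulling each single integral through the finite graph sum replaces each $-\frac{1}{\sqrt{-2}}\theta^k_\si(p_j)$ by $-\frac{1}{\sqrt{-2}}\int_0^{\eta_j}\rho_q^*\theta^k_\si$, which is precisely the open-leaf weight, turning $w^\bp_B$ into $w^\eta_B$. So it suffices to prove \eqref{eqn:B-graph-sum}, which I restrict to the stable range $2g-2+n>0$ where $\bGa_{g,n}$ is nonempty.

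To prove \eqref{eqn:B-graph-sum} I would verify that our spectral curve supplies exactly the input required by the DOSS identification and then match the graph ingredients one by one. The map $X:\overline C_q\to\bC$ has precisely the two ramification points $P_0,P_1$ listed above, and near each the expansion $x=x_\si+\zeta_\si^2$ shows the ramification is simple, so $\zeta_\si$ is a genuine local coordinate and $(\overline C_q,X,Y,\omega_{0,2})$ is an admissible spectral curve with a diagonalized, semisimple local structure. The $R$-matrix governing the decomposition is $\check R_{\si'}^{\ \si}(z)$, defined in the excerpt through the Laplace/steepest-descent integral $\frac{\sqrt{z}\,e^{-\check u^\si/z}}{2\sqrt{\pi}}\int_{\gamma_\si} e^{-x/z}\theta^0_{\si'}$; by the standard computation in \cite{EO07, DOSS} this series satisfies the symplectic (unitarity) constraint and is the $R$-matrix of the associated CohFT.

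Next I would identify the three remaining graph weights with the three pieces of Givental's formula for the ancestor potential of the trivial (point) CohFT dressed by $\check R$. The edge weight $\check B^{\si,\si'}_{k,l}$ is the Bergman-kernel expansion coefficient, and the formula $[z^kw^l]\frac{1}{z+w}(\delta_{\si\si'}-\check R(z)_\gamma^{\ \si}\check R(w)_\gamma^{\ \si'})$ recorded in the theorem is exactly Eynard's expression \cite{EO07} for the propagator built from $\check R$. The splitting of the second-kind differential, Equation \eqref{eqn:C}, namely $C(p_1,p_2)=\frac12\sum_\si \theta^0_\si(p_1)\theta^0_\si(p_2)$, is what converts the two local insertions created at each recursion vertex into a diagonal sum over $\si$, matching the vertex marking. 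The vertex weight $\langle\prod_h \tau_{k(h)}\rangle_{g(v)}=\int_{\Mbar_{g(v),\val(v)}}\prod_h \psi_h^{k(h)}$ is the correlator of the point CohFT, while the prefactor $(h_1^\si/\sqrt{-2})^{2-2g-\val(v)}$ records the scale of the canonical frame: $h_1^\si$ is the leading coefficient of $y$ in $\zeta_\si$ and thus fixes the normalization of $y\,dx$ at the critical point, and the $\sqrt{-2}$ comes from $dx=2\zeta_\si\,d\zeta_\si$. Finally the ordinary-leaf term $\theta^k_\si(p)$, together with its $\check R$-dressing from Proposition \ref{prop:b-open-leaf} ($\theta_\si(z)=\sum_{\si'}\check R_{\si'}^{\ \si}(z)\htheta_{\si'}(z)$), is precisely the dressed descendant one-form attached to an external leaf. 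With these four correspondences in place, the DOSS identification yields \eqref{eqn:B-graph-sum}.

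I expect the main obstacle to be the careful bookkeeping of normalizations and signs in this dictionary, rather than any single hard estimate. Concretely, one must match the double-factorial normalizations hidden in $\check B^{\si,\si'}_{k,l}$ and $\check h^\si_k$, the factors of $\sqrt{-2}$ distributed among vertices, dilaton leaves and ordinary leaves, and the global sign $(-1)^{g(\vGa)-1}$, against the conventions of \cite{DOSS, Ey11}, which differ between references and in which a single sign error propagates through the whole recursion. If a self-contained argument is preferred, the same matching can be made by induction on $2g-2+n$: one checks the base cases $\omega_{0,3}$ and $\omega_{1,1}$ directly from \eqref{eqn:C} and the definition of $\check R$, substitutes the graph-sum expressions for the lower invariants into the recursion, and evaluates the residues at $P_0,P_1$ using the local $\zeta_\si$-expansions. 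The residue extraction is exactly where the $(2d-1)!!\,2^{-d}$ factors in the definition of $\theta^d_\si$ combine to produce the $\psi$-class monomials on $\Mbar_{g,n+1}$, so verifying that step is where the real work lies.
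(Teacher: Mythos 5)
The paper gives no proof of this theorem: it is quoted directly from \cite{Ey11, DOSS}, so your outline is being compared against a citation rather than an argument, and it is a faithful reconstruction of how those references establish the identity (admissible spectral curve with simple ramification, $\check R$ from the Laplace transform of $\theta^0_{\si'}$, Bergman-kernel edge weights, point-CohFT vertices). The one self-contained step you supply --- deducing the integrated identity from the pointwise one by pushing $\int_0^{\eta_j}\rho_q^*$ through the finite graph sum, using that $w^\bp_B(\vGa)$ depends on $p_j$ only through the single ordinary-leaf one-form in that variable --- is correct and is exactly how the paper implicitly treats it via the definition of $w^\eta_B$.
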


\section{Mirror symmetry for open invariants with respect to $\L$}
\label{sec:mirror}
\subsection{Mirror theorem for genus $0$ descendants}
We follow the notation of Givental \cite{Gi97}. Define the
equivariant small I-function as below
\begin{equation}
I(z,\tau_0,\tau_1)=e^{\frac{\tau_0 \sv+\tau_1 \HT}{z}}\sum_{d\geq 0} e^{\tau_1 d} \frac{\prod_{m=-d}^{-1}(\frac{D_1}{z}-d-m)(\frac{D_2}{z}-d-m)}{\prod_{m=0}^{d-1}(\frac{D_3}{z}+d-m)\prod_{m=0}^{d-1}(\frac{D^4}{z}+d-m)}.\label{eqn:I}
\end{equation}
Here each $D_i$ is the equivariant first Chern class of the equivariant line bundle associated to each toric divisor. The equivariant small J-function is
\begin{align*}
J(z,\tau_0,\tau_1)&=e^{\frac{\tau_0 \sv+\tau_1 \HT}{z}}(1+\sum_{d>0} e^{\tau_1d} (\ev_1^d)_*
  \frac{1}{z(z-\psi_1)})\\
&=e^{\frac{\tau_0\sv+\tau_1 \HT}{z}} \sum_{\si=0}^1\langle \frac{\phi^\si }{z(z-\psi_1)}\rangle^{X}_{0,1,d}e^{\tau_1d}\phi_\si.
\end{align*}

We quote the famous mirror theorem \cite{Gi96a, Gi96b, Gi97, LLY97, LLY99a} below for $\cX=[\cO_{\bP^1}(-1)\oplus
\cO_{\bP^1}(-1)]$. The mirror map is trivial in this particular situation.
\begin{theorem}[Givental, Lian-Liu-Yau]
$$I(z,\tau_0,\tau_1)=J(z,\tau_0,\tau_1).$$
\label{thm:IJ}
\end{theorem}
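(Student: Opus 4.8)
The plan is to deduce the identity from Givental's theory of the Lagrangian cone, since both $I$ and $J$ are $H^*_\T(\cX;\bStt)$-valued functions of $z$ attached to the same genus-$0$ Frobenius structure. By its very definition the big $J$-function parametrizes (the dilaton-shifted germ of) Givental's overruled Lagrangian cone $\cL_\cX$ of $\cX$, so $J(z,\tau_0,\tau_1)$ is a point of $\cL_\cX$ whose $1/z$-expansion reads $1+(\tau_0\sv+\tau_1\HT)/z+O(z^{-2})$. Thus it suffices to show (i) that $I(z,\tau_0,\tau_1)$ also lies on $\cL_\cX$, and (ii) that its $z^{-1}$-coefficient is exactly $\tau_0\sv+\tau_1\HT$, i.e. that the mirror map is trivial; Givental's uniqueness lemma then forces $I=J$.

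For (i) I would invoke the equivariant toric mirror theorem of Givental and Lian--Liu--Yau. The resolved conifold is a smooth toric Calabi--Yau threefold whose fan has exactly the four rays giving the torus-invariant divisors $D_1,D_2,D_3,D_4$ appearing in \eqref{eqn:I}, and the series written there is precisely the toric hypergeometric $I$-function built from this combinatorial data together with the $\T$-action. The general theorem then asserts that this $I$-function is a point of $\cL_\cX$; its proof proceeds by $\bC^*$-fixed-point localization on the graph spaces of stable maps to $\cX$, matching the localization contributions of $I$ with the recursion and polynomiality conditions that characterize cone points. I would cite this rather than reprove it, since it is exactly the quoted result of \cite{Gi96a,Gi96b,Gi97,LLY97,LLY99a}.

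Step (ii) is a direct degree count in powers of $z^{-1}$. The $d=0$ summand contributes only the prefactor $e^{(\tau_0\sv+\tau_1\HT)/z}$. For each $d\geq 1$ the numerator $\prod_{m=-d}^{-1}(\frac{D_1}{z}-d-m)(\frac{D_2}{z}-d-m)$ contains, at $m=-d$, the two factors $\frac{D_1}{z}$ and $\frac{D_2}{z}$, so the $d$-th numerator is $O(z^{-2})$, while the denominator $\prod_{m=0}^{d-1}(\frac{D_3}{z}+d-m)\prod_{m=0}^{d-1}(\frac{D^4}{z}+d-m)$ has constant leading term $(d!)^2$ and is $O(1)$. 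Multiplying by $e^{(\tau_0\sv+\tau_1\HT)/z}=1+O(z^{-1})$ leaves every $d\geq1$ term at order $z^{-2}$. Hence $I=1+(\tau_0\sv+\tau_1\HT)/z+O(z^{-2})$, so the mirror map is the identity and the cone parameter of $I$ equals $\tau_0\sv+\tau_1\HT$, matching that of $J$.

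Finally, Givental's uniqueness statement—that a point of $\cL_\cX$ with $1/z$-expansion $1+\btau/z+O(z^{-2})$ is exactly $J(z,\btau)$—applies to $I$ with $\btau=\tau_0\sv+\tau_1\HT$, giving $I=J$. The main obstacle is Step (i): that $I$ lies on the Lagrangian cone is the substance of the mirror theorem and is not elementary. In our situation, however, it is an established result whose hypotheses—smoothness, the toric Calabi--Yau structure, and the identification of \eqref{eqn:I} with the standard toric $I$-function—are immediate to verify, so the genuinely new content reduces to the degree count of Step (ii) and the bookkeeping identifying the two normalizations.
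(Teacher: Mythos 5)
The paper offers no proof of this theorem: it is quoted directly from Givental and Lian--Liu--Yau with the remark that the mirror map is trivial here, and your proposal follows exactly the same route, deferring the substantive content (that the $I$-function of \eqref{eqn:I} lies on the Lagrangian cone of $\cX$) to the same cited toric mirror theorems. Your degree count in $z^{-1}$ correctly supplies the one verification the paper leaves implicit --- each $d\geq 1$ summand carries the factor $\tfrac{D_1}{z}\cdot\tfrac{D_2}{z}$ from $m=-d$ while the denominator has invertible constant term $(d!)^2$, so $I=1+(\tau_0\sv+\tau_1\HT)/z+O(z^{-2})$ and the mirror map is indeed trivial --- and Givental's uniqueness characterization of $J$ then closes the argument as you say.
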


In the rest of this paper, we denote the trivial mirror map as below
\begin{equation}
\label{eqn:mirror-map}
\tau_0=0,\quad \tau_1=\log q,\quad \btau=\btau(q)=(\log q) \sH^\T.
\end{equation}

For any $\btau\in H^*_\T(X;\bC)$, the canonical basis $\phi_\si(\btau)$ is decomposed as
\[
\phi_\si(\btau)=B_\si(\btau) \HT + C_\si(\btau),
\]
where $B_\si(\btau), C_\si(\btau)\in \bStt$. We quote a proposition
from \cite[Section 4.4 and Proposition 6.2, ]{FLZ16}.
\begin{proposition}
\label{prop:canonical-in-flat}
We have
\[
\frac{h^\si_1}{2}\theta^0_\si= -B_\si(\btau(q))\vert_{\sv=1} d\left ( \frac{q \frac{\partial \Phi}{\partial q}}{d x} \right )+C_\si(\btau(q))\vert_{\sv=1} d\left(\frac{d y}{d x}\right).
\]
\end{proposition}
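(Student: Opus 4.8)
The plan is to prove the identity by viewing both sides as meromorphic $1$-forms on the genus-$0$ curve $\overline C_q$ and comparing their polar parts: since $\overline C_q$ carries no holomorphic $1$-form, a differential of the second kind is determined by its principal parts together with the vanishing of all residues, so it suffices to check that the two sides agree pole-by-pole at the ramification points $P_0,P_1$.

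First I would record the pole structure of the left-hand side. By the characterizing property of $\theta^0_\si$ quoted above, it is meromorphic on $\overline C_q$ with a single pole, of order $2$, at $P_\si$, and in the local coordinate $\zeta_\si$ determined by $x=x_\si+\zeta_\si^2$ its principal part is $-\zeta_\si^{-2}\,d\zeta_\si$; hence $\frac{h^\si_1}{2}\theta^0_\si$ has principal part $-\frac{h^\si_1}{2}\zeta_\si^{-2}\,d\zeta_\si$ at $P_\si$ and no other poles. For the right-hand side I would use the local expansions $x=x_{\si'}+\zeta_{\si'}^2$, $y=y_{\si'}+\sum_{k\geq 1}h^{\si'}_k\zeta_{\si'}^k$ and the identity $\frac{d}{dx}=\frac{1}{2\zeta_{\si'}}\frac{d}{d\zeta_{\si'}}$ to expand $\frac{dy}{dx}=\frac{h^{\si'}_1}{2}\zeta_{\si'}^{-1}+\cdots$ and the analogous expansion for $\frac{q\,\partial_q\Phi}{dx}$ near each $P_{\si'}$. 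This shows each of the two fixed $1$-forms $d(\frac{dy}{dx})$ and $d(\frac{q\,\partial_q\Phi}{dx})$ has a double pole at $P_{\si'}$ with leading coefficient read off from $h^{\si'}_1$ and from $q\partial_q$ of the local data; I would then check that these forms are of the second kind and that, in the combination $-B_\si\,d(\frac{q\,\partial_q\Phi}{dx})+C_\si\,d(\frac{dy}{dx})$, the apparent poles away from $P_\si$ (at the other ramification point and at infinity) cancel. Concretely, writing $-\frac{b^{\si'}}{2}\zeta_{\si'}^{-2}\,d\zeta_{\si'}$ for the principal part of $d(\frac{q\,\partial_q\Phi}{dx})$ at $P_{\si'}$, the whole claim reduces to the two scalar relations $B_\si\,b^{\si'}-C_\si\,h^{\si'}_1=-\delta_{\si\si'}\,h^\si_1$ for $\si'=0,1$.

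The heart of the proof is to show that the A-model coefficients $B_\si(\btau(q))$ and $C_\si(\btau(q))$ are exactly the numbers that enforce these relations. Here I would invoke the genus-$0$ mirror theorem (Theorem \ref{thm:IJ}) under the mirror map \eqref{eqn:mirror-map}, which identifies the $\T$-equivariant Frobenius structure of $QH^*_\T(\cX)$ at $\btau(q)$ with the Frobenius structure on $\overline C_q$ attached to the superpotential. Under this dictionary the flat classes $\HT$ and $1$ correspond to the $1$-forms $-d(\frac{q\,\partial_q\Phi}{dx})$ and $d(\frac{dy}{dx})$ respectively (the closed modulus direction is $\partial_{\tau_1}=q\partial_q$, matching the divisor class $\HT$), while the idempotent canonical basis element $\phi_\si(\btau)$ corresponds to $\frac{h^\si_1}{2}\theta^0_\si$, because the B-model idempotents localize at the ramification points. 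The flat decomposition $\phi_\si(\btau)=B_\si\HT+C_\si$ then transforms termwise into the asserted identity, with the explicit disk computation (Proposition \ref{prop:localization} and the coefficients $B_{0,1}(w,d)$ of \eqref{eqn:B-disk}) and the identification of the A-model canonical coordinates with the critical values of $x$ supplying the remaining bookkeeping.

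The main obstacle is precisely this last identification: translating the A-model eigenbasis data $(B_\si,C_\si)$ — defined abstractly through quantum multiplication — into the B-model principal-part coefficients $b^{\si'},h^{\si'}_1$. It amounts to showing that quantum multiplication by $\HT$ becomes multiplication by the function $-\frac{q\,\partial_q\Phi}{dx}$ on $\overline C_q$, so that its eigenstructure at $P_0,P_1$ reproduces the canonical/idempotent decomposition; the pole- and residue-matching on the two sides are routine local computations by comparison, and the genus-$0$ mirror theorem together with the disk invariants does the essential work of fixing the coefficients.
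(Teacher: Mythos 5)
You should first note that the paper does not actually prove Proposition \ref{prop:canonical-in-flat}: it is quoted from \cite[Section 4.4 and Proposition 6.2]{FLZ16}, so your attempt can only be measured against the argument there. Your overall strategy is sound and is essentially the one used in that reference. Both sides are meromorphic $1$-forms of the second kind on the rational curve $\overline C_q$, holomorphic away from $P_0,P_1$, so the identity reduces to matching principal parts at the two ramification points. The local expansions $x=x_{\si'}+\zeta_{\si'}^2$, $y=y_{\si'}+\sum_k h^{\si'}_k\zeta_{\si'}^k$ give principal part $-\tfrac{h_1^{\si'}}{2}\zeta_{\si'}^{-2}d\zeta_{\si'}$ for $d\bigl(\tfrac{dy}{dx}\bigr)$ and $+\tfrac{h_1^{\si'}}{2}\bigl(q\partial_q x_{\si'}\bigr)\zeta_{\si'}^{-2}d\zeta_{\si'}$ for $d\bigl(\tfrac{q\partial_q\Phi}{dx}\bigr)$, so your two scalar relations are equivalent to $B_\si(\btau(q))\,q\partial_q x_{\si'}+C_\si(\btau(q))=\delta_{\si\si'}$. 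This is exactly the identity $\phi_\si(\btau)\star_{\btau(q)}\phi_{\si'}(\btau)=\delta_{\si\si'}\phi_{\si'}(\btau)$ written out in the flat basis, \emph{provided} the eigenvalue of $\HT\star_{\btau(q)}$ on $\phi_{\si'}(\btau)$, namely $\partial u^{\si'}/\partial\tau_1$, equals $q\partial_q x_{\si'}$. Your bookkeeping up to this point is correct.

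That last equality --- the $\tau_1$-derivatives of the canonical coordinates of $QH^*_\T(\cX)$ at $\btau(q)$ coincide with the $\log q$-derivatives of the critical values of $x$ on $C_q$ --- is the entire content of the proposition, and it is precisely the step your proposal does not carry out. Invoking ``the mirror theorem identifies the Frobenius structures'' begs the question: Theorem \ref{thm:IJ} is a statement about the $J$-function, and extracting from it that quantum multiplication by $\HT$ corresponds to multiplication by $-\tfrac{q\partial_q\Phi}{dx}$ requires a genuine argument, e.g.\ the stationary-phase comparison of $S_z(1,\phi_{\si'})$ with oscillatory integrals over the Lefschetz thimbles $\gamma_{\si'}$, or, in this rank-two case, a direct check: the multiple-cover formula gives the small quantum product of $\cX$ in closed form, so its eigenvalues can be matched against $q\partial_q$ of $x_{\si}(q)=-\log\bigl(U_\si(q)^Q V_\si(q)^P\bigr)$ using the explicit formulas for $U_\si(q),V_\si(q)$ in Section \ref{sec:spectral}. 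Either route would close the argument; as written, the hardest step is deferred. A smaller loose end: you speak of poles ``cancelling'' away from the ramification points, but there is no cancellation available --- each of $d\bigl(\tfrac{dy}{dx}\bigr)$ and $d\bigl(\tfrac{q\partial_q\Phi}{dx}\bigr)$ must individually be shown regular at the four points of $\overline C_q$ where $U$ or $V$ is $0$ or $\infty$ (a short local computation, since $dx$ and $dy$ have simple poles with proportional leading terms there), before the principal-parts argument applies.
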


\subsection{Mirror symmetry for disk invariants}

In this section we use the genus $0$ mirror theorem  to compute the descendant part of the disk potential, and
identify it with the non-fractional parts of the B-model disk
invariants.

Let $f\in \bC \llbracket \eta_1,\dots,\eta_n\rrbracket$. For a fixed integer number $Q>0$, we denote
\[
\mathfrak h_{\eta_1,\dots,\eta_n}\cdot f(\eta_1,\dots,\eta_n)=\sum_{k_1,\dots,k_n=0}^{Q-1} \frac{f(a^{k_1}\eta_1,\dots,a^{k_n}\eta_n)}{Q^n},
\]
where $a$ is a primitive $Q$-th root of unity. This operation ``throws away'' all terms with degree not divisible by $Q$. If $f$ is an actual function analytic at $0$, we also use the same notation.

Recall that in Section \ref{sec:A-model}, we define the A-model disk amplitude as
$$
F^{\cX,\L}_{0,1}(X,\btau)=\sum_{\mu>0} \llangle \rrangle_{0,0,\mu}|_{\fQ=1}  X^\mu.
$$

We have
\[
\iota_0^* D_1=Q\sv,\quad \iota_0^* D_2=P\sv,\quad \iota_0^*
D_3=-(P+Q)\sv,\quad \iota_0^* D_4=0,
\]
and
\begin{equation}
\iota_0^* I(z,t_0,t_1)=e^{\frac{t_0}{z}}\sum_{d\geq 0} e^{t_1d} S^d \frac{\prod_{m=-d}^{-1}(\frac{Q\sv}{z}-d-m)(\frac{P\sv}{z}-d-m)}{\prod_{m=0}^{d-1}(\frac{-(P+Q)\sv}{z}+d-m)d!}.\label{eqn:pullback}
\end{equation}

The pullback along $\iota_0$ is
$$
\iota_0^* J(z,t_0,t_1)=e^{\frac{t_0}{z}}\sum_{d\geq 0} \langle
\frac{\phi^0}{z(z-\psi_1)} \rangle_{0,1,d}^{\cX}e^{t_1d}.
$$

By the mirror theorem and Equation \eqref{eqn:pullback}, when $\tau_0=0$
\begin{align*}
F^{\cX,\L}_{0,1}(X,\tau_1)&=\sum_{\mu>0} D(\mu) X^\mu \sum_{d\geq 0} e^{dt_1}\langle
  \frac{\phi^0}{\frac{\sv}{\mu}(\frac{\sv}{\mu}-\psi_1)}\rangle_{0,1,d}^\cX\\
&=\sum_{\mu>0} D(\mu) X^\mu \sum_{d\geq 0} q^d
  \frac{\prod_{m=-d}^{-1}(Q\mu-d-m)(P\mu-d-m)}{\prod_{m=0}^{d-1}(-(P+Q)\mu+d-m)d!}\\
&=\sum_{\mu>0,0\leq d\leq Q \mu}(-1)^{(Q+P)\mu-d-1} X^\mu q^d \frac{\prod_{m=1}^{Q\mu-1}(P\mu-d+m)}{d!(Q\mu-d)!}.
\end{align*}
Here we identify $e^{\tau_1}=q$. Comparing with Equation \eqref{eqn:B-disk}, we have the following disk theorem.
\begin{theorem}[Disk mirror theorem for $\L$]
\label{thm:disk-mirror-theorem}
$$
F^{\cX,\L}_{0,1}=Q(\mathfrak h_\eta \cdot W_{0,1})(\eta,q),
$$
where $(\mathfrak h_\eta\cdot W_{0,1})(\eta,q)=\mathfrak h_\eta \cdot \left ( \int_{\eta'=0}^{\eta'=\eta}\rho^*_q(v(\eta')-v(0))(-\frac{d\eta'}{\eta'})\right )$.
\end{theorem}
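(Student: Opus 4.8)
The plan is to reduce the statement to a coefficientwise comparison of two explicit power series in $X$ and $q$. Since $\eta=X^{1/Q}$, the averaging operator $\mathfrak h_\eta$ keeps exactly the monomials $\eta^w q^d$ with $Q\mid w$, and on such a monomial $\eta^w=X^{w/Q}$; writing $w=Q\mu$ turns $\mathfrak h_\eta\cdot W_{0,1}$ into a genuine power series in $X$. Thus the theorem is equivalent to the equality, for every $\mu>0$ and $d\ge 0$, of the coefficient of $X^\mu q^d$ in $F^{\cX,\L}_{0,1}$ with $Q$ times the coefficient of $\eta^{Q\mu}q^d$ in $W_{0,1}$.

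For the A-model side I would start from the descendant form \eqref{eqn:free-energy-localization-disk}, namely $F^{\cX,\L}_{0,1}=[z^{-2}]S_z(1,\phi_0)\,\txi^0(z,X)$, and unwind the definitions of $\txi^0$ and $\Phi^a$. The factor $S_z(1,\phi_0)=(1,\cS(\phi_0))$ localizes to the fixed point $\fp_0$, where $\phi_0$ restricts to $1$, so it is proportional to the pullback $\iota_0^*J(z)$ of the $\T$-equivariant $J$-function, and collecting the monomial $X^\mu$ forces the evaluation $z=\sv/\mu$. The genus $0$ mirror theorem (Theorem \ref{thm:IJ}) then replaces $\iota_0^*J$ by the explicit hypergeometric expression $\iota_0^*I$ of \eqref{eqn:pullback}, and the substitution $z=\sv/\mu$ sends $Q\sv/z,\,P\sv/z,\,-(P+Q)\sv/z$ to $Q\mu,\,P\mu,\,-(P+Q)\mu$. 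This is precisely the computation displayed just before the statement: together with the disk factor $D(\mu)$ from Proposition \ref{prop:localization} and \eqref{eqn:disk-factor}, and after cancelling the factorials against the products of consecutive integers, the coefficient of $X^\mu q^d$ takes the closed form with numerator $\prod_{m=1}^{Q\mu-1}(P\mu-d+m)$ and denominator $d!\,(Q\mu-d)!$, up to the sign $(-1)^{(P+Q)\mu-d-1}$.

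For the B-model side I would feed the Lagrange-inversion output \eqref{eqn:B-disk} into the definition of $W_{0,1}$, so that $W_{0,1}=\sum_{w,d}B_{0,1}(w,d)\eta^w q^d$ once the integration in the definition of $W_{0,1}$ cancels the leading factor $w$ against the $1/w$ inside $B_{0,1}$. Applying $\mathfrak h_\eta$ and setting $w=Q\mu$ simplifies $B_{0,1}(Q\mu,d)$: the phase becomes $e^{-\pi\sqrt{-1}((P+Q)\mu-d-1)}=(-1)^{(P+Q)\mu-d-1}$, the quantity $Pw/Q$ becomes $P\mu$, and $\Gamma(d+1)\Gamma(w-d+1)=d!\,(Q\mu-d)!$, so the same numerator $\prod_{m=1}^{Q\mu-1}(P\mu-d+m)$ reappears, now multiplied by $1/(Q\mu)$. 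Multiplying by the prefactor $Q$ from the statement cancels the $Q$ and leaves $1/\mu$ times this product.

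The step I expect to be the genuine obstacle is the exact bookkeeping of all numerical prefactors so that the two closed forms coincide: the $1/w=1/(Q\mu)$ produced by the B-model integration, the overall $Q$, and the powers of $\sv/\mu$ together with the disk factor $D(\mu)$ produced by the disk localization must conspire, after the normalization $\sv=1$, to make the A-model coefficient of $X^\mu q^d$ equal to $Q\,B_{0,1}(Q\mu,d)$. The underlying combinatorial identity is only the elementary telescoping of products of consecutive integers already used in the displayed simplification; the real content is that precisely the prefactor $Q$ and the discarding operator $\mathfrak h_\eta$ are what is needed to match the A-model. I would conclude by recalling that the restriction to $\fQ=1$ is well-defined, as noted in the remarks preceding the statement.
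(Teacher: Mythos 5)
Your proposal follows the paper's proof essentially verbatim: the A-model coefficient of $X^\mu q^d$ is obtained from the disk factor of Proposition \ref{prop:localization} together with the mirror theorem $I=J$ evaluated at $z=\sv/\mu$, and is then compared term by term with $Q\,B_{0,1}(Q\mu,d)$ read off from \eqref{eqn:B-disk}. The one loose end you flag --- the residual $1/\mu$ --- does close: extracting $[z^{-2}]$ from $S_z(1,\phi_0)\,\txi^0(z,X)$ converts the factor $(\mu/\sv)^a\mu$ inside $\Phi_a$ into an overall $\mu\cdot(\sv/\mu)^2=\sv^2/\mu$ multiplying $\iota_0^*J(\sv/\mu)$, and after setting $\sv=1$ this supplies exactly the $1/\mu$ needed to match the $Q\cdot\frac{1}{Q\mu}$ coming from the B-model side.
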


\subsection{Matching the graphs sums}
\label{sec:match-graph}

The localization formula \eqref{eqn:free-energy-localization-stable} in Section \ref{sec:A-model},
and Givental's graph sum formula \cite{Gi01a,Gi01b} express $F_{g,n}^{\cX,\L}$ as in Equation \eqref{eqn:graph-open}. As a special case of \cite[Theorem 7.1]{FLZ16}, we know under the trivial mirror map \eqref{eqn:mirror-map}
\begin{equation}
\label{eqn:R-identification}
R_{\si'}^{\ \si}(z)|_{\btau=(\log q) \H^\T, \mathfrak Q=1,\sv=1}=\check R_{\si'}^{\ \si} (-z).
\end{equation}
Also from \cite[Section 4.4 and Lemma 4.3]{FLZ16} and the fact
$h^\si_1=\sqrt{2/\frac{d^2x}{dy^2}}$ by direct calculation, we have
\[
\frac{h_1^\si}{\sqrt{-2}}=\sqrt{\frac{1}{\Delta^\si(\btau)\vert_{\sv=1}}}.
\]

Immediately from these facts and the graph sum formulae \eqref{eqn:B-graph-sum} and \eqref{eqn:A-graph-sum}, the weights in the graph sum match except for the open leafs. We will compare them in the next section.

\subsection{Matching open leaves}

We define
\[
U_z(\tau,X)= \sum_{\si'=0}^1 \txi^{\si'}
  (z,X)S(1,\phi_{\si'})\big|_{\mathfrak Q=1}.
\]
By Equation \eqref{eqn:free-energy-localization-disk},
\begin{align*}
F^{\cX,\L}_{0,1}(\btau, X) &= [z^{-2}] \sum_{\si'=0}^1 \txi^{\si'}(z,X)S(1,\phi_{\si'})|_{\mathfrak Q=1},\\
(X\frac{d}{dX})^2F^{\cX,\L}_{0,1}(\btau,X) &=[z^0] \sum_{\si'=0}^1 \txi^{\si'}(z,X)S(1,\phi_{\si'})|_{\mathfrak Q=1} \sv^2.
\end{align*}
Since $F^{\cX,\L}_{0,1}=Q(\mathfrak h_\eta \cdot W_{0,1})(\eta,q)$ (Theorem \ref{thm:disk-mirror-theorem}), as power series in $X$,
\begin{align}
  [z^0](\sv^2 U(z)(\tau,X))&=-(\frac{\eta d}{Q d\eta}) (\mathfrak h_\eta\cdot \rho^*_q)(v)\nonumber \\
\left(\sv^2 U(z)(\tau,X)\right)_{\geq -1}&=-\sum_{n\geq -1} (\frac{z}{\sv})^n (\frac{\eta d}{Qd\eta})^{n+1}(\mathfrak h_\eta \cdot \rho_q^*)(v).\label{eqn:z-disk}
\end{align}
Here $()_{\geq a}$ truncates the $z$-series, keeping terms whose power is greater or equal than $a$. We usually denote $()_{\geq 0}$ by $()_+$.

If we write the canonical basis by flat basis
\[
\hat \phi_{\si}(\btau(q))=\hat B_\si (q)  \HT+ \hat C_\si(q) 1,
\]
then
\[
\sum_{\si'=0}^1 \txi^{\si'} (z,X) S(\hat \phi_\si(\btau(q)), \phi_{\si'})\big|_{\mathfrak Q=1}=z\hat B_\si(q) \frac{\partial U}{\partial \tau_1}+ \hat C_\si(q) U.
\]
Here we use the fact that $S(\H^\L,\phi_{\si'})=z\frac{\partial S(1, \phi_{\si'})}{\partial \tau_1}$.
From Proposition \ref{prop:canonical-in-flat},
\begin{align}
\label{eqn:b-canonical}
\nonumber -\frac{\theta^0_{\si}}{\sqrt{-2}}&=-\hat B_{\si}(q)\vert_{\sv=1} d(\frac{q\frac{\partial \Phi}{\partial q}}{d x})
+ \hat C_\si(q)\vert_{\sv=1} d(\frac{dy}{dx})\\
&=\frac{1}{Q}\left(-\hat B_{\si}(q)\vert_{\sv=1} d(q\frac{\partial v}{\partial q})
+ \hat C_\si(q)\vert_{\sv=1} d(\frac{dv}{dx})\right).
\end{align}
By Equation \eqref{eqn:z-disk} and \eqref{eqn:b-canonical}, under the open-closed mirror map

\begin{align*}
\mathfrak h_\eta \cdot \int_0^\eta \frac{\rho^*_q \theta_\si^0}{\sqrt{-2}}&=-\frac{1}{Q} \left(-\hat B_\si(q)\vert_{\sv=1}(q\frac{\partial (\mathfrak h_\eta \cdot \rho^*_q v)}{\partial q}) +\hat C_\si(q)\vert_{\sv=1} \frac{d(\mathfrak h_\eta \cdot \rho^*_q v)}{dx}\right)\\
&=-\frac{1}{Q}\left(-\hat B_\si(q)\vert_{\sv=1}\frac{\partial (\mathfrak h_\eta\cdot \rho_q^* v)}{\partial \tau_1}-\hat C_\si(q)\vert_{\sv=1}(\frac{\eta d}{Qd\eta})(\mathfrak h_\eta\cdot \rho_q^* v))\right)\\
&=-\frac{1}{Q}\left(\hat B_\si(q) [z^{-1}]\frac{\partial U(z)(\tau,X)}{\partial \tau_1}+\hat C_\si(q)[z^0] U(z)(\tau,X)\right)_{\sv=1},
\end{align*}
Therefore
\begin{align}
  \label{eqn:xi-in-disk}
  \mathfrak h_\eta \cdot \int_0^\eta \frac{\rho^*_q \hat \theta_\si(z)}{\sqrt{-2}}&=\sum_{a\geq 0} z^a (X\frac{d}{dX})^a (\mathfrak h_\eta \cdot \int_0^\eta \frac{\rho^*_q \theta_\si^0}{\sqrt{-2}})\\
  &=-\frac{1}{Q}(+\hat B_\si(q)z \frac{\partial U_{\geq -1}}{\partial \tau_1}+\hat C_\si(q)U_{\geq 0})\nonumber\\
  &=-\frac{1}{Q}\left(\sum_{\si'=0}^1 \txi^{\si'}(z,X)
  S( \hat\phi_{\si}(\btau(q)),\phi_{\si'})\big|_{\mathfrak
    Q=1}\right)_+.\nonumber
\end{align}

\begin{theorem}[BEM-DSV conjecture for annulus
  invariants]
  \label{thm:main-theorem-annulus}
\[
F^{\cX,\L}_{0,2}(X_1,X_2,\btau)|_{\btau=\btau(q)}=-Q^2 \mathfrak h_{\eta_1,\eta_2}
\cdot W_{0,2}(\eta_1,\eta_2,q).
\]
\end{theorem}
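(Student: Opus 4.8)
The plan is to collapse both sides onto a common expression: a sum over the two ramification points $\si=0,1$ of a product of two one-variable ``disk-type'' quantities, one in $X_1$ and one in $X_2$. The annulus is an unstable case ($2g-2+n=0$), so it is not governed by the graph sum of Theorem~\ref{thm:B-model-graph}; instead I would work directly from the localization formula \eqref{eqn:free-energy-localization-annulus} on the A-side and from the identity \eqref{eqn:C} (Lemma~6.8 of \cite{FLZ16}) on the B-side, gluing the two through the disk open-leaf matching \eqref{eqn:xi-in-disk} already established above. The argument runs parallel to the disk case (Theorem~\ref{thm:disk-mirror-theorem}), with the extra work concentrated in the two-point structure.

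First, on the B-model side, \eqref{eqn:C} gives $C(p_1,p_2)=\frac12\sum_{\si=0}^1\theta^0_\si(p_1)\theta^0_\si(p_2)$, a sum of products of one-forms in separate variables, so the double integral defining $W_{0,2}$ factorizes:
\[
W_{0,2}(\eta_1,\eta_2,q)=\frac12\sum_{\si=0}^1\Big(\int_0^{\eta_1}\rho_q^*\theta^0_\si\Big)\Big(\int_0^{\eta_2}\rho_q^*\theta^0_\si\Big).
\]
Since $\mathfrak h_{\eta_1,\eta_2}$ is multiplicative on products of functions of the separate variables, $\mathfrak h_{\eta_1,\eta_2}W_{0,2}=\frac12\sum_\si\big(\mathfrak h_{\eta_1}\int_0^{\eta_1}\rho_q^*\theta^0_\si\big)\big(\mathfrak h_{\eta_2}\int_0^{\eta_2}\rho_q^*\theta^0_\si\big)$. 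Reading the $z^0$-coefficient of \eqref{eqn:xi-in-disk} identifies each factor with a disk quantity,
\[
\mathfrak h_{\eta}\int_0^{\eta}\frac{\rho_q^*\theta^0_\si}{\sqrt{-2}}=-\frac1Q\,[z^0]\Big(\txi^0(z,X)S_z(\hat{\phi}_\si(\btau(q)),\phi_0)\big|_{\fQ=1}\Big),
\]
and tracking the constants $\sqrt{-2},Q,\frac12$ turns $-Q^2\mathfrak h_{\eta_1,\eta_2}W_{0,2}$ into $\sum_\si$ of the product of the two $[z^0]$-coefficients above.

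Next, on the A-model side, I would apply the WDVV-type identity \eqref{eqn:two-in-one} to $V_{z_1,z_2}(\phi_0,\phi_0)$ with respect to the quantum normalized canonical basis $\{\hat{\phi}_\si(\btau)\}$, which is orthonormal and hence self-dual. Because $\txi^{\si'}$ vanishes for $\si'=1$, only the $\phi_0$-components survive and \eqref{eqn:free-energy-localization-annulus} becomes
\[
F^{\cX,\L}_{0,2}=[z_1^{-1}z_2^{-1}]\frac1{z_1+z_2}\sum_{\si=0}^1\big(\txi^0(z_1,X_1)S_{z_1}(\hat{\phi}_\si(\btau),\phi_0)\big)\big(\txi^0(z_2,X_2)S_{z_2}(\hat{\phi}_\si(\btau),\phi_0)\big)\big|_{\fQ=1},
\]
which is built from exactly the same one-variable factors as the B-model expression. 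It then remains to extract $[z_1^{-1}z_2^{-1}]\frac1{z_1+z_2}$ of this product and to show it equals the sum over $\si$ of the products of the $z^0$-coefficients, matching the B-side.

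The hard part will be this last extraction. The two-point function $V_{z_1,z_2}$ carries the unstable summand $\frac{(\phi_0,\phi_0)}{z_1+z_2}$, whose $[z_1^{-1}z_2^{-1}]$-extraction against $\txi^0(z_1,X_1)\txi^0(z_2,X_2)$ depends on how the kernel $\frac1{z_1+z_2}$ is expanded, so the matching is not merely coefficientwise. I would keep the product form produced by \eqref{eqn:two-in-one} intact and use the unitarity of $\cS$ — equivalently $\sum_\si S_{-z}(\hat{\phi}_\si(\btau),\phi_0)S_{z}(\hat{\phi}_\si(\btau),\phi_0)=(\phi_0,\phi_0)$ — which says the numerator equals $(\phi_0,\phi_0)$ along $z_1=-z_2$, so the apparent pole is precisely the residue recorded by the $\frac{(\phi_0,\phi_0)}{z_1+z_2}$ term. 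To sidestep the integration constants implicit in the disk primitives $\int_0^{\eta}\rho_q^*\theta^0_\si$, I would differentiate the claimed identity once in each of $X_1$ and $X_2$: this reduces both sides to honest power series with no constant ambiguity, turns the B-side into a sum over $\si$ of products of the one-variable derivatives $X_i\frac{d}{dX_i}$ applied to the factorized disk primitives, and collapses the A-side extraction onto the already-proven disk relation \eqref{eqn:xi-in-disk} together with \eqref{eqn:C}. Recovering the full integrated statement then requires only checking the finitely many lowest-order terms in $X_1,X_2$ directly.
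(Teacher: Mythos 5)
Your overall architecture coincides with the paper's: both arguments factor the annulus form through Equation \eqref{eqn:C}, convert each one-variable factor by the disk-leaf identity \eqref{eqn:xi-in-disk}, and reassemble the A-side by the WDVV identity \eqref{eqn:two-in-one} in the orthonormal basis $\{\hat{\phi}_\si(\btau)\}$. You have also correctly located the crux: the unstable summand $\frac{(\phi_0,\phi_0)}{z_1+z_2}$ makes the extraction $[z_1^{-1}z_2^{-1}]$ convention-dependent, and an expression of the form $[z_1^{-1}z_2^{-1}]\frac{1}{z_1+z_2}\sum_\si A_\si(z_1)B_\si(z_2)$ does \emph{not} split as a sum over $\si$ of products of one-variable coefficients so long as the kernel $\frac{1}{z_1+z_2}$ is present.

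The gap is that your proposed remedy does not remove that kernel. Applying $X_1\frac{d}{dX_1}$ and $X_2\frac{d}{dX_2}$ separately multiplies $\txi^0(z_1,X_1)\txi^0(z_2,X_2)$ by (essentially) $\sv^2 z_1^{-1}z_2^{-1}$, and $\frac{1}{z_1 z_2}\cdot\frac{1}{z_1+z_2}$ still has the pole along $z_1=-z_2$: the contribution of $\frac{(\phi_0,\phi_0)}{z_1+z_2}$ remains ambiguous (the two one-sided expansions of $\frac{1}{z_1+z_2}$ give genuinely different finite answers because $\txi^0$ has poles up to order $z^{-2}$), and the factorization over $\si$ you need at the last step still fails. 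The paper instead applies the first-order operator $X_1\frac{\partial}{\partial X_1}+X_2\frac{\partial}{\partial X_2}$ once; this produces the factor $z_1^{-1}+z_2^{-1}=\frac{z_1+z_2}{z_1z_2}$, which cancels the kernel exactly, so one is extracting an unambiguous coefficient of
\[
(z_1+z_2)\,V_{z_1,z_2}(\phi_0,\phi_0)\,\txi^0(z_1,X_1)\txi^0(z_2,X_2)
=\sum_{\si=0}^1\bigl(\txi^0(z_1,X_1)S_{z_1}(\hat{\phi}_\si,\phi_0)\bigr)\bigl(\txi^0(z_2,X_2)S_{z_2}(\hat{\phi}_\si,\phi_0)\bigr),
\]
which factors variable by variable and matches, via \eqref{eqn:xi-in-disk}, the image of $-Q^2\,\mathfrak h\cdot W_{0,2}$ under the same operator $\eta_1\frac{\partial}{Q\partial\eta_1}+\eta_2\frac{\partial}{Q\partial\eta_2}$ (note that $\int_0^{\eta_1}\int_0^{\eta_2}(\rho_q^{\times 2})^*C$ is precisely this derivative of the annulus primitive, which is why the derivative enters on the B-side too). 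With that operator your unitarity detour and the final low-order check become unnecessary, since the operator is injective on power series with no constant term in either variable. To repair your write-up, replace the mixed second-order differentiation by this single symmetric first-order differentiation; the rest of your outline then goes through as in the paper.
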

\begin{proof}
\begin{align*}
& \mathfrak h_{\eta_1,\eta_2}\cdot\left ((\eta_1\frac{\partial}{Q\partial \eta_1} + \eta_2\frac{\partial}{Q\partial \eta_2})W_{0,2}(q;\eta_1,\eta_2)\right)\\
=& \mathfrak h_{\eta_1,\eta_2}\cdot \left (\int_0^{\eta_1}\int_0^{\eta_2}(\rho_q^{\times 2})^*C\right )=\frac{1}{2}\sum_{\si=0}^1 (\mathfrak h_{\eta_1} \cdot \int_0^{\eta_1}(\rho_q)^* \theta_\si^0)(\mathfrak h_{\eta_2}\cdot \int_0^{\eta_2}(\rho_q)^* \theta_\si^0 )\\
=&-\frac{1}{Q^2}[z_1^{-2}z_2^{-2}] \sum_{\si,\si',\si''=0}^1 \txi^{\si''}(z_1,X_1)
   \txi^{\si'}(z_2,X_2) S(\hat\phi_{\si}(\btau),\phi_{\si'})\big|_{\btau=\btau(q), \fQ=1}  S(\hat\phi_{\si}(\btau),\phi_{\si''})\big|_{\btau=\btau(q), \fQ=1}\\
=&-\frac{1}{Q^2} [z_1^{-2}z_2^{-2}](z_1+z_2) \sum_{\si',\si''=0}^1 V( \phi_{\si'},\phi_{\si''})\big|_{\btau=\btau(q), \fQ=1}\txi^{\si'}(z_1,X_1) \txi^{\si''}(z_2,X_2)\\
=&-\frac{1}{Q^2}[z_1^{-1}z_2^{-1}] (X_1\frac{\partial }{\partial
   X_1}+X_2\frac{\partial}{\partial X_2}) \sum_{\si',\si''=0}^1 V( \phi_{\si'},\phi_{\si''})\big|_{\btau=\btau(q), \fQ=1}\txi^{\si'}(z_1,X_1) \txi^{\si''}(z_2,X_2)\\
=& -\frac{1}{Q^2}(X_1\frac{\partial}{\partial X_1} + X_2\frac{\partial}{\partial X_2})F^{\cX,\L}_{0,2}(X_1,X_2,\btau).
\end{align*}
where the second equality follows from Equation \eqref{eqn:C}, the fourth
equality follows from Equation \eqref{eqn:xi-in-disk}, the fifth equality
is WDVV (Equation \eqref{eqn:two-in-one}), and the last equality follows from \eqref{eqn:free-energy-localization-annulus}.

\end{proof}
\begin{theorem}[BEM-DSM conjecture for $2g-2+n>0$]
  \label{thm:main-theorem-stable}
$$F^{\cX,\L}_{g,n}(X_1,\dots,X_n,\btau)|_{\btau=\btau(q)}=(-1)^{g-1+n} Q^n (\mathfrak h \cdot
W_{g,n})(\eta_1,\dots,\eta_n,q).$$
\end{theorem}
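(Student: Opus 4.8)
The plan is to prove the identity by matching the A-model and B-model graph sums term by term, after applying the operator $\mathfrak h$. By the open A-model graph sum \eqref{eqn:graph-open} and the B-model graph sum \eqref{eqn:B-graph-sum}, both $F^{\cX,\L}_{g,n}$ and $W_{g,n}$ are sums over the \emph{same} finite set $\bGa_{g,n}$ of stable decorated graphs, with the \emph{same} automorphism factors $|\Aut(\vGa)|$. Since $\mathfrak h_{\eta_1,\dots,\eta_n}=\mathfrak h_{\eta_1}\cdots\mathfrak h_{\eta_n}$ factorizes and each ordinary-leaf weight depends on a single variable $\eta_j$, the operator $\mathfrak h$ acts only on the open leaves and commutes with the finite sum over graphs. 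It therefore suffices to prove the weight identity
\[
w^X_A(\vGa)=(-1)^{g-1+n}Q^n\,\mathfrak h\cdot w^\eta_B(\vGa)
\]
for every $\vGa\in\bGa_{g,n}$, evaluated under the trivial mirror map \eqref{eqn:mirror-map} with $\sv=1$ and $\fQ=1$.

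First I would dispose of the vertex, edge, and dilaton-leaf contributions. Using the $R$-matrix identification \eqref{eqn:R-identification}, which gives $R(-z)_{\si'}^{\ \si}|_{\btau=(\log q)\HT,\fQ=1,\sv=1}=\check R_{\si'}^{\ \si}(z)$, together with the normalization $h^\si_1/\sqrt{-2}=\big(\Delta^\si(\btau)|_{\sv=1}\big)^{-1/2}$ recorded in Section \ref{sec:match-graph}, one checks directly that the propagators $\cE^{\alpha,\beta}_{k,l}$ and $\check B^{\alpha,\beta}_{k,l}$ coincide, that the dilaton leaves $(\cL^1)^\beta_k$ and $(\check{\cL^1})^\beta_k$ coincide, and that the vertex factors agree, the Hodge--psi integrals $\langle\prod_h\tau_{k(h)}\rangle_{g(v)}$ being literally identical while the exponents $2g(v)-2+\val(v)$ and $2-2g(v)-\val(v)$ are negatives of each other. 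This is exactly the assertion of Section \ref{sec:match-graph} that ``the weights match except for the open leaves,'' and none of these three pieces contributes an extra sign or power of $Q$.

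The crux is the open-leaf comparison. Writing the B-model open-leaf weight as a generating series in $z$ and substituting Proposition \ref{prop:b-open-leaf}, $\theta_\si(z)=\sum_{\si'}\check R_{\si'}^{\ \si}(z)\htheta_{\si'}(z)$, I would pull the $z$-series $\check R_{\si'}^{\ \si}(z)$ (independent of $\eta_j$) outside both $\mathfrak h_{\eta_j}$ and the integral, reducing matters to the bare building block $\mathfrak h_{\eta_j}\int_0^{\eta_j}\rho_q^*\htheta_{\si'}(z)/\sqrt{-2}$. This is precisely the quantity computed in \eqref{eqn:xi-in-disk}, which (using $\txi^1=0$, so only the $\phi_0$ term survives) expresses it through $\big(\txi^0(z,X_j)S_z(\hat\phi_{\si'}(\btau),\phi_0)\big)_+$ at $\fQ=1$ --- exactly the bare factor in the A-model open leaf $(\cL^X)^\si_k=[z^k]\big(\txi^0(z,X_j)S_z(\hat\phi_\rho(\btau),\phi_0)\big)_+R(-z)_\rho^{\ \si}$ before its $R(-z)$ dressing. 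Since $R(-z)|_{\cdots}=\check R(z)$ supplies the same dressing, comparing coefficients of $z^k$ yields the leaf identity $\mathfrak h_{\eta_j}\cdot(\check\cL^\eta)^\si_k(l_j)=-\frac{1}{Q}(\cL^X)^\si_k(l_j)$.

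Finally I would assemble the factors. The $n$ open leaves each contribute $-1/Q$, the matched vertex/edge/dilaton factors contribute nothing further, and $w^\eta_B$ carries the global prefactor $(-1)^{g(\vGa)-1}=(-1)^{g-1}$; hence $\mathfrak h\cdot w^\eta_B(\vGa)=(-1)^{g-1}(-1/Q)^n\,w^X_A(\vGa)$, which rearranges to the desired $w^X_A(\vGa)=(-1)^{g-1+n}Q^n\,\mathfrak h\cdot w^\eta_B(\vGa)$, and summing over $\bGa_{g,n}$ with the common weights $1/|\Aut(\vGa)|$ gives the theorem. I expect the main obstacle to be the sign and normalization bookkeeping in the open-leaf step: keeping track of the $-1/\sqrt{-2}$ in the B-model leaves, the branch of $\sqrt{-2}$ entering the vertex exponents, the substitution $z\mapsto-z$ in \eqref{eqn:R-identification}, and the global $(-1)^{g-1}$, while verifying that the truncation $(\ )_+$ and the $[z^k]$-extraction are compatible on both sides so that the leaf identity holds coefficient-by-coefficient and not merely after summation.
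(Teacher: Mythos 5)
Your proposal is correct and follows essentially the same route as the paper's proof: the non-open-leaf weights are matched exactly as in Section \ref{sec:match-graph} via \eqref{eqn:R-identification} and the identity $h_1^\si/\sqrt{-2}=(\Delta^\si(\btau)\vert_{\sv=1})^{-1/2}$, and the open-leaf identity $(\cL^X)^\si_k\vert_{\sv=1}=-Q\,\mathfrak h\cdot(\check\cL^\eta)^\si_k$ is obtained from \eqref{eqn:xi-in-disk} (your reading of it through Proposition \ref{prop:b-open-leaf} is the same computation in the other direction), after which the bookkeeping $(-1)^{g-1}(-Q)^n$ gives the result. The only point the paper makes explicit that you leave implicit is why evaluating at $\sv=1$ is harmless, namely that the A-model open potential is independent of the equivariant parameter.
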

\begin{proof}

  As shown in Section \ref{sec:match-graph}, other than open leaf terms, the A-model graph weights match B-model graph weights under the trivial mirror map \eqref{eqn:mirror-map} and after setting the A-model equivariant parameter $\sv=1$.  By Equation \eqref{eqn:xi-in-disk}, the A-model open leaf is
\[
(\cL^X)^\si_k{\vert_{\sv=1}}=-Q[z^k]\frac{1}{\sqrt{-2}}\mathfrak h \cdot \left( \int^\eta_0 \sum_{\si'=0}^1
  \left (R_{\si'}^{\ \si}(-z)\right)|_{\btau=\btau(q),\fQ=1,\sv=1}\rho_q^* \hat\theta_{\si'}(z)\right),
\]
while the B-model open leaf is
\[
(\check \cL^\eta)^\si_k=[z^k]\frac{1}{\sqrt{-2}}\int_0^\eta \sum_{\si'=0}^1 R_{\si'}^{\ \si} (-z)\vert_{\btau=\btau(q),\fQ=1,\sv=1}\rho_q^* \hat \theta_{\si'}(z).
\]
So \[(\mathcal L^X)^\si_k\vert_{\sv=1}=-Q \mathfrak h\cdot(\check{\mathcal
    L}^\eta)^\si_k.\]
Therefore, for any decorated stable graph $\vGa$, since A-model open potential does not depend on the equivariant parameter, i.e. a scalar of cohomological degree 0 in the equivariant cohomology, then
\[w^X_A(\vGa)=w^X_A(\vGa)|_{\sv=1}=(-1)^{g-1+n}Q^n (\mathfrak h\cdot w_B^\eta(\vGa)).
\]
The factor $(-1)^{g-1}$ comes from the B-model graph sum formula \eqref{eqn:B-graph-weight}, while the factor
$(-Q)^n$ comes from the factor $-Q$ for open leaves.
\end{proof}

\begin{bibdiv}
\begin{biblist}

\bibselect{mybib}

\end{biblist}
\end{bibdiv}

\end{document}